\documentclass[12pt,a4paper]{amsart}
\makeatletter
\renewcommand\normalsize{%
    \@setfontsize\normalsize{11.7}{14pt plus .3pt minus .3pt}%
    \abovedisplayskip 10\p@ \@plus4\p@ \@minus4\p@
    \abovedisplayshortskip 6\p@ \@plus2\p@
    \belowdisplayshortskip 6\p@ \@plus2\p@
    \belowdisplayskip \abovedisplayskip}
\renewcommand\small{%
    \@setfontsize\small{9.5}{12\p@ plus .2\p@ minus .2\p@}%
    \abovedisplayskip 8.5\p@ \@plus4\p@ \@minus1\p@
    \belowdisplayskip \abovedisplayskip
    \abovedisplayshortskip \abovedisplayskip
    \belowdisplayshortskip \abovedisplayskip}
\renewcommand\footnotesize{%
    \@setfontsize\footnotesize{8.5}{9.25\p@ plus .1pt minus .1pt}
    \abovedisplayskip 6\p@ \@plus4\p@ \@minus1\p@
    \belowdisplayskip \abovedisplayskip
    \abovedisplayshortskip \abovedisplayskip
    \belowdisplayshortskip \abovedisplayskip}
\setlength\parindent    {30\p@}
\setlength\textwidth    {412\p@}
\setlength\textheight   {570\p@}
\paperwidth=210mm
\paperheight=260mm
\ifdefined\pdfpagewidth
\setlength{\pdfpagewidth}{\paperwidth}
\setlength{\pdfpageheight}{\paperheight}
\else
\setlength{\pagewidth}{\paperwidth}
\setlength{\pageheight}{\paperheight}
\fi
\calclayout
\makeatother

\usepackage{amsmath,amssymb,cite,mathrsfs,tikz-cd}
\usepackage[all]{xy}
\usepackage{hyperref} 

\usepackage{pdfcomment} 

\setcounter{tocdepth}{1}

\newtheorem{teo}{Theorem}[section]
\newtheorem{thm}[teo]{Theorem}
\newtheorem{prop}[teo]{Proposition}
\newtheorem{lemma}[teo]{Lemma}

\makeatletter
\newcommand{\neutralize}[1]{\expandafter\let\csname c@#1\endcsname\count@}
\makeatother

\newenvironment{thmrepeat}[1]
  {%
   \neutralize{teo}\phantomsection
   \begin{thm}}
  {\end{thm}}

\newenvironment{thmbis}[1]
  {%
   \neutralize{teo}\phantomsection
   \begin{thm}}
  {\end{thm}}

\numberwithin{equation}{section}

\newenvironment{equationrepeat}[1]
  {%
   \neutralize{equation}\phantomsection
   \begin{equation}}
  {\end{equation}}

\newcommand{\IQbar}{\overline{\mathbb{Q}}}

\newcommand{\cA}{\mathcal{A}}
\newcommand{\cB}{\mathcal{B}}

\newcommand{\cC}{{\mathcal C}}

\newcommand{\cL}{{\mathcal L}}
\newcommand{\cM}{{\mathcal M}}

\newcommand{\pullbackcorner}[1][dr]{\save*!/#1-1.7pc/#1:(-1.5,1.5)@^{|-}\restore}

\usepackage{color}




\begin{document}
\title{The Uniform Mordell--Lang Conjecture}
\author{Ziyang Gao, Tangli Ge, Lars K\"{u}hne}

\address{Department of Mathematics, UCLA, Los Angeles, CA 90095, USA}
\email{ziyang.gao@math.ucla.edu}

\address{Department of Mathematics, Princeton University, Princeton, NJ 08544, USA}
\email{tangli@princeton.edu}

\address{School of Mathematics and Statistics, University College Dublin, Dublin 4, Ireland}
\email{lars.kuehne@ucd.ie}

\subjclass[2000]{11G10, 14G05, 14G25, 14K12}





\maketitle

\begin{abstract}
	
The Mordell--Lang conjecture for abelian varieties states that the intersection of an algebraic subvariety $X$ with a subgroup of finite rank is contained in a finite union of cosets contained in $X$. In this article, we prove a uniform version of this conjecture, meaning that the number of cosets necessary depend only on the dimension of the ambient variety and the degree (with respect to some polarization) of the subvariety $X$. To achieve this, we prove a general gap principle on algebraic points that extends the new gap principle for curves embedded into their Jacobians, previously obtained by Dimitrov--Gao--Habegger and K\"{u}hne. 
Our new gap principle also implies the full uniform Bogomolov conjecture in abelian varieties.
\end{abstract}

\tableofcontents

\section{Introduction}

Throughout this article, $F$ is an algebraically closed field of characteristic $0$; particularly relevant cases are $F=\IQbar$ and $F=\mathbb{C}$. Furthermore, we let $A$ be an abelian variety defined over $F$ and consider an ample line bundle $L$ on $A$. The translates of abelian subvarieties by closed points in $A$ are called \emph{cosets}.

The main result of our article is the following uniform version of the well-known Mordell--Lang conjecture (see \cite[p.\ 138]{Lang1962}). 

\begin{thm}[Uniform Mordell--Lang Conjecture]\label{MainThm2}
    For all integers $g,d \geq 0$, there exists a constant $c(g,d)>0$ with the following property. Let $X \subseteq A$ be an irreducible closed subvariety and $\Gamma\subseteq A(F)$ a subgroup of finite rank. Then the intersection $X(F)\cap\Gamma$ is covered by at most $$c(\dim A,\deg_L X)^{1+\mathrm{rk}\Gamma}$$ cosets contained in $X$.
\end{thm}

For readers' convenience, let us restate this theorem in a more explicit fashion. For every polarized abelian variety of dimension $g$, every irreducible closed subvariety $X \subseteq A$ of degree $d$ (with respect to the given polarization) and every subgroup $\Gamma$ of finite rank $\rho$, it claims the existence of cosets
\begin{equation*}
	x_i+B_i \subseteq X, \quad 1\leq i \leq N \leq c(g,d)^{1+\rho},
\end{equation*}
such that
\begin{equation}\label{EqHighDimML}
	X(F)\cap\Gamma = \bigcup_{i=1}^N (x_i+B_i)(F) \cap \Gamma.
\end{equation}

\medskip

The \textit{original} Mordell--Lang conjecture asserts that \textit{finitely} many cosets 
\begin{equation*}
	x_i + B_i \subseteq X, \ 1 \leq i \leq N,
\end{equation*}
are sufficient to cover the intersection $X(F) \cap \Gamma$ as in \eqref{EqHighDimML}, without suggesting any further quantitative control on their number. This original version, which combines both the Manin--Mumford conjecture \cite{Raynaud:MM} and the Mordell conjecture \cite{Faltings:ES}, was established by Faltings \cite{Faltings:DAAV,Faltings94}, following work of Hindry \cite{Hindry:Lang} and Vojta \cite{Vojta:siegelcompact}. An explicit upper bound for the number of cosets was obtained by R\'emond \cite{Remond:Decompte}, which additionally depends on the ambient abelian variety $A$ via its Faltings height $h_{\mathrm{Fal}}(A)$.

The novelty here is the complete removal of this very dependence on the ambient abelian variety $A$, confirming a  folklore expectation that can be found for example in \cite[Conj.~1.8]{DaPh:07}. It is known under the name of \textit{uniform Mordell--Lang conjecture} because the number $N$ in \eqref{EqHighDimML} must depend on $g$, $d$ and $\rho$. 
For curves embedded in their Jacobian, it dates back to a question of Mazur \cite[top of p.\ 234]{mazur1986arithmetic}, which has been answered affirmatively by work of Dimitrov, the first-named author, Habegger and the third-named author \cite{DGHUnifML,Kuehne:21}. Readers may profit from the survey of the first-named author \cite{GaoSurveyUML} for an overview of these previous works and their implications for rational points on algebraic curves. Let us remark that before these works, the only uniform results of Mordell-Lang type were obtained by David and Philippon \cite[Thm.~1.13]{DaPh:07} for subvarieties of self-products of an elliptic curve. It should be also noted that they give a completely explicit constant in this special case. In this regard, it is interesting to ask whether the present arguments can yield \textit{explicit} upper bounds on the number of cosets similar to those of David and Philippon \cite{DaPh:07} or if substantial new ideas are necessary.

Our proof of Theorem~\ref{MainThm2} relies on the ideas established in the series of work \cite{DGH1p, DGHUnifML, DGHBog, Kuehne:21} building upon Vojta's approach \cite{Vojta:siegelcompact} to the Mordell conjecture. However, several new difficulties arise in the higher-dimensional case considered here; see $\mathsection$\ref{SubsectionIdeaProof} for a short discussion. A comprehensive outline of all other sections can be found at the end of the introduction.

\subsection{The Ueno locus and first reductions}

The \textit{Ueno locus} of $X$ is the union of positive dimensional cosets contained in $X$. A result of Kawamata \cite[Thm.~4]{KawamataLocus} states that it is Zariski closed. Write $X^{\circ}$ for its complement in $X$; then $X^{\circ}$ is a Zariski open set. Note that the Ueno locus of a smooth, proper curve of genus $g\geq 2$ embedded into its Jacobian is empty.
 
A recursive argument in $\mathsection$\ref{SubsectionMainThm3toMainThm2} reduces Theorem~\ref{MainThm2} to the following weaker statement.

\begin{thmbis}{MainThm2}\label{MainThm3}
    For all integers $g,d \geq 0$, there exists a constant $c(g,d)>0$ with the following property. Let $X \subseteq A$ be an irreducible closed subvariety and $\Gamma\subseteq A(F)$ a subgroup of finite rank. Then 
    \begin{equation}\label{EqBoundLatticePointsOutsideUeno}
            \#X^\circ(F)\cap\Gamma \le c(\dim A,\deg_L X)^{1+\mathrm{rk}\Gamma}.
    \end{equation}
\end{thmbis}

Besides reducing to Theorem \ref{MainThm3}, we also use a specialization argument of Masser \cite{masser1989specializations} to assume $F=\IQbar$ in our main arguments. This is an essential reduction, since we will mostly work with
arithmetical arguments around the theory of heights. 
 Although Theorem \ref{MainThm3} could itself be understood as a purely geometric assertion (\textit{e.g.}\ for $F=\mathbb{C}$), we heavily rely on arithmetic tools throughout our proof.

\subsection{A generalized gap principle}
Our approach is modeled on Vojta's proof of the Mordell conjecture \cite{Vojta:siegelcompact} and its later refinements, notably the quantitative ones obtained by Rémond \cite{Remond:Decompte, Remond:Vojtasup}. His method leads to a dichotomy between algebraic points of \textit{large} and \textit{small} N\'eron--Tate height, which we call large and small points for simplicity.

A uniform count of large points can be done by using the work of R\'emond \cite{Remond:Decompte, Remond:Vojtasup}, which provides explicit, generalized versions of Mumford's and Vojta's inequalities. Compared to the case of curves in their Jacobians dealt with in \cite{DGH1p, DGHUnifML}, some extra work is actually needed to establish Mumford's inequality. In particular, we need to invoke the induction hypothesis twice to handle large points. We give a detailed account in Appendix~\ref{AppendixRemond} without claiming originality.

Our main contribution is a uniform count of small points, and we achieve this by establishing another kind of  
 \textit{gap principle} for algebraic points. In the case of curves in their Jacobians, 
 preceding work \cite{DGHUnifML,Kuehne:21} has been subsumed under such a New Gap Principle \cite[Thm.~4.1]{GaoSurveyUML}. In this article, we take the same perspective and generalize it as follows. We say that a subvariety $X \subseteq A$ \textit{generates} $A$ if the smallest abelian subvariety containing $X-X$ is $A$. Furthermore, we let $\hat{h}_{L\otimes [-1]^*L}$ denote the N\'eron--Tate height on $A(\IQbar)$ associated with the symmetric ample line bundle $L\otimes [-1]^*L$.

\begin{thm}[New Gap Principle]\label{ThmSmallPointHighDim}
Given two positive integers $g$ and $d$, there exist positive constants $c_1 = c_1(g , d)$ and $c_2 = c_2(g, d)$ with the following property: For any polarized abelian variety $(A,L)$ of dimension $g$ and any irreducible closed subvariety $X \subseteq A$ that generates $A$ of degree $\deg_L X \le d$, the set
\begin{equation}\label{EqNGP}
 \left\{ P \in X^{\circ}(\IQbar) : \hat{h}_{L\otimes [-1]^*L}(P) \le c_1\max\{1, h_{\mathrm{Fal}}(A)\} \right\}, 
\end{equation}
is contained in $X'(\IQbar)$ for a proper Zariski closed $X' \subsetneq X$ with $\deg_L(X') < c_2$.
\end{thm}

Note that this theorem has been predicted by \cite[Conj.~10.5']{GaoSurveyUML}. The two examples constructed at the end of \cite[$\mathsection$10.2]{GaoSurveyUML} show that, in contrast to the case of curves embedded into their Jacobians, one can neither get rid of the assumption that $X$ generates $A$ nor assert that \eqref{EqNGP} is a finite set of uniformly bounded cardinality.

\medskip

We remark that Theorem \ref{ThmSmallPointHighDim} implies a uniform version of the Bogomolov conjecture, generalizing \cite[Thm.~3]{Kuehne:21} in the case of curves embedded in their Jacobians. Since this result is of independent interest, we state it here explicitly.

\begin{thm}[Uniform Bogomolov Conjecture]\label{ThmUBC}
Given two positive integers $g$ and $d$, there exist positive constants $c_3 = c_3(g , d)$ and $c_3 = c_3(g, d)$ with the following property: For any abelian variety $(A,L)$ of dimension $g$ and any irreducible closed subvariety $X \subseteq A$ that generates $A$ of degree $\deg_L X \le d$, we have
\begin{equation}\label{EqUBC}
\# \left\{ P \in X^{\circ}(\IQbar) : \hat{h}_{L\otimes [-1]^*L}(P) \le c_3 \right\} < c_4.
\end{equation}
\end{thm}

We emphasize that as in the case of curves, to obtain Theorem~\ref{MainThm3} it is necessary to use the New Gap Principle for \eqref{EqNGP} instead of \eqref{EqUBC} because the dichotomy of large and small points is in comparison to $\max\{1,h_{\mathrm{Fal}}(A)\}$. 
Unlike the case of curves embedded in their Jacobians, this theorem is not formally a special case of Theorem~\ref{ThmSmallPointHighDim}, 
although a deduction from Theorem~\ref{ThmSmallPointHighDim} is easy and given in  $\mathsection$\ref{SectionUBC}.

An analogue of Theorem~\ref{ThmUBC} for curves in algebraic tori was proven by Bombieri--Zannier \cite{Bombieri1995} (compare also \cite{David1999, Amoroso2006}). For the case of abelian varieties considered here, Theorem~\ref{ThmUBC} was known only in some cases \cite{DaPh:07, DeMarcoKriegerYeUniManinMumford, Kuehne:21}. Let us note that all these results except for \cite{Kuehne:21} have rather explicit constants, in contrast to our general Theorem \ref{ThmUBC} here.

\medskip

After the first version of the current article appeared as a preprint, Yuan \cite{YuanUMLCurve} gave another proof of Theorem \ref{ThmSmallPointHighDim} in the case of curves embedded into their Jacobians previously considered in \cite{DGHUnifML, Kuehne:21}. His approach has the advantage to work in the function field case (in positive characteristic) as well. It relies on previous work of Zhang, Cinkir, and de Jong \cite{ZhangAdmissiblePairing, ZhangGrossSchoen, CinkirBogomolov, DeJong} for lower bounds on the self-intersection numbers of the admissible canonical bundles of curves over global fields. Little is known in this direction for subvarieties of higher dimension and it seems very hard to generalize Yuan's proof to the setting considered here. 
He also uses the new theory of Yuan--Zhang \cite{YuanZhangEqui} on \textit{adelic line bundles on quasi-projective varieties}.

\subsection{Ideas of the proof}\label{SubsectionIdeaProof}

Non-degenerate subvarieties of abelian schemes, a notion introduced by Habegger \cite{Hab:Special} and extensively studied by the first-named author \cite{GaoBettiRank}, have played a central role in previous work \cite{DGH1p, DGHUnifML, DGHBog, Kuehne:21} and continue to do so in the current article. They derive their importance from the fact that they are the natural setting for both
\begin{enumerate}
	\item the \textit{height inequality} of \cite[Thm.~1.6 and B.1]{DGHUnifML}, which allows a comparison of the N\'{e}ron--Tate height and the height on the base variety, and
	\item the \textit{equidistribution theorem} \cite[Thm.~1]{Kuehne:21}.
\end{enumerate}
More recently, Yuan and Zhang have reproven both these results using their general theory of adelic line bundles over quasi-projective varieties \cite{YuanZhangEqui}.

A starting problem in the current paper is hence the construction of an appropriate non-degenerate subvariety. In the case of curves embedded in their Jacobians, by the quasi-finiteness of the Torelli map, we could restrict ourselves to consider subvarieties of an abelian scheme $\mathcal{A}\rightarrow S$ of \textit{maximal variation}, that is, the moduli map from $S$ to the moduli space of abelian varieties is generically finite. In the current paper, we need a space parametrizing all subvarieties of a fixed degree in abelian varieties of a fixed dimension and polarization. While there is a natural candidate,  the \textit{Hilbert scheme}, the moduli map from it to the moduli space of abelian varieties  has \textit{positive dimensional fibers}. This makes the construction of the relevant non-degenerate subvarieties significantly harder than the case of curves.

We resolve this problem by showing that the moduli map \textit{on the total space}, when restricted to the universal family over an open subset of the Hilbert scheme \eqref{EqHilbSchemeOpenLocusFamily}, becomes still generically finite after taking a high enough fibered power (Lemma~\ref{LemmaGenericFiniteHilb}), as inspired by the second-named author's work \cite[$\mathsection$3]{GeQuadraticPoints}. Then \cite[Thm.~10.1]{GaoBettiRank} gives us the desired non-degeneracy (Proposition~\ref{PropNonDeg}). We expect the idea of this construction to be applicable in other settings, for example to study uniformity problems for semiabelian varieties, which would extend in particular the uniform results of Bombieri--Zannier \cite{Bombieri1995} on algebraic tori, and  even to study related problems in some families of dynamical systems as in \cite{GauthierVignyTaflin}. The need for non-degeneracy makes it necessary to shift back and forth to fibered products. In the course of this, we have to control the exceptional sets appearing. For this purpose, we provide a technical key Lemma~\ref{LemmaNogaAlon}.

To prove uniform bounds as in Theorem~\ref{MainThm3}, it is important to work with only \textit{finitely many} families at all times. While this is automatic in the setting for curves embedded into their Jacobians (by an induction on the dimension of the subvariety), we have to carefully handle the invariants involved, in particular the polarization type. Even if one is only interested in the case of principal polarization, our inductive proof generally invokes abelian varieties of \textit{any} polarization degree. We use various techniques to overcome these problems, including a classical result of Mumford and the Poincar\'{e} biextension on the universal abelian variety.

\subsection{Outline of the article}
In $\mathsection$\ref{SectionPreliminary}, we review basic facts on abelian varieties and polarization types. In particular, we give a bound Lemma~\ref{LemmaSmallestAbVarGenerated} on the degree of the abelian subvariety generated by $X$, using an argument suggested to us by Marc Hindry. This allows us to avoid any dependence on the polarization degree $\deg_L(A)$ in the final results.

In $\mathsection$\ref{SectionBasicSetupNonDegeneracy}, we construct the families of non-degenerate subvarieties used in our main arguments. We also deduce the key result to establish their non-degeneracy here (Proposition \ref{PropNonDeg}) from the results of \cite{GaoBettiRank}.

In $\mathsection$\ref{SectionAppHtIneq}, we apply the height inequality \cite[Thm.1.6 and B.1]{DGHUnifML} to these non-degenerate subvarieties. The main result of this section is Proposition~\ref{PropHtIneqStep1}, which roughly proves an analogue of Theorem~\ref{ThmSmallPointHighDim}, albeit invoking more invariants, with \eqref{EqNGP} replaced by the set
\begin{equation*}\label{EqnIntroHtIneq}
	\left\{ x \in X^{\circ}(\IQbar) : \hat{h}_{L\otimes [-1]^\ast L}(x) \le c_1'\max\{1, h_{\mathrm{Fal}}(A)\} - c_3' \right\}
\end{equation*}
for certain uniform $c_1',c_3'>0$. Note that this set is exactly \eqref{EqNGP} if $h_{\mathrm{Fal}}(A) \ge \max\{1,2c_3'/c_1'\}$ up to changing constants, but becomes trivially empty if $\max\{1,h_{\mathrm{Fal}}(A)\} < c_3'/c_1'$. 
For counting purposes, a key technical Lemma~\ref{LemmaNogaAlon} allows us to use the non-degenerate fibered products of the (in general degenerate) Hilbert schemes constructed in $\mathsection$\ref{SectionBasicSetupNonDegeneracy}.

The main result of $\mathsection$\ref{SectionEqDist} is Proposition \ref{PropEqDistStep1}, which again has the same form as Theorem~\ref{ThmSmallPointHighDim} but without $h_{\mathrm{Fal}}(A)$; here, the set \eqref{EqNGP} replaced by the set
\begin{equation*}\label{EqnIntroEqui}
\left\{ x \in X^{\circ}(\IQbar) : \hat{h}_{L\otimes [-1]^\ast L}(x) \le c_3'' \right\}
\end{equation*}
for a certain uniform $c_3''>0$. Its proof invokes the equidistribution theorem \cite[Thm.~1]{Kuehne:21} for the same non-degenerate subvarieties as in  $\mathsection$\ref{SectionAppHtIneq}. The proof follows the classical strategy of Ullmo \cite{Ullmo} and Zhang \cite{ZhangEquidist} with substantially new technical difficulties. Lemma~\ref{LemmaNogaAlon} is  used again.

In $\mathsection$\ref{SectionEndOfNGP}, we combine Propositions \ref{PropHtIneqStep1} and \ref{PropEqDistStep1} into the desired gap principle stated in Theorem ~\ref{ThmSmallPointHighDim}.

In $\mathsection$\ref{SectionUML}, we show how to deduce the uniform Mordell--Lang conjecture by combining the gap principle with a result of Rémond. In addition, we use a specialization argument of Masser to reduce to the case $F=\IQbar$ (Lemma \ref{LemmaSpecialization}). We also include an argument to deduce Theorem~\ref{MainThm2} from Theorem~\ref{MainThm3}.

In $\mathsection$\ref{SectionUBC}, we show how to deduce the uniform Bogomolov conjecture from the new gap principle (Theorem \ref{ThmSmallPointHighDim}).

In Appendix~\ref{AppendixRemond}, we give a more detailed account of R\'emond's result that is one of the two essential ingredients for the proof in $\mathsection$\ref{SectionUML}.

\subsection*{Acknowledgements} The authors would like to thank Marc Hindry for the argument for Lemma~\ref{LemmaSmallestAbVarGenerated}. The authors would like to thank Gabriel Dill for his valuable comments on a previous version of the manuscript, especially for a correction of the argument for Lemma~\ref{LemmaChowVar0Dim}, the reference \cite[Lemma 2.4]{DillTorsionIsogenousAV}, discussions on the specialization argument and on the optimality of our formulation of Theorem \ref{MainThm2}. We thank Dan Abramovich and Philipp Habegger for comments on a draft of this paper and Abbey Bourdon for a relevant discussion on the consequence of our main result on rational points. We would like to thank the referee for their careful reading and helpful comments. 

ZG received funding from the European Research Council (ERC) under the European Union’s Horizon 2020 research and innovation programme (grant agreement n$^{\circ}$ 945714). TG received funding from NSF grant DMS-1759514 and DMS-2100548. LK received funding from the European Union’s Horizon 2020 research and innovation programme under the Marie Sklodowska-Curie grant agreement No. 101027237. A revision of this article was supported by the National Science Foundation under Grant No.\ DMS-1928930, while the three authors were in residence at the SL-Math 
 in Berkeley, California, during its semester on ``Diophantine Geometry'' in 2023.

\section{Preliminaries on abelian varieties}\label{SectionPreliminary}

In this section, we include some basic results on abelian varieties. Throughout the section, let $A$ be an abelian variety defined over $\IQbar$ of dimension $g$, where $\IQbar$ is an algebraic closure of $\mathbb{Q}$ in $\mathbb{C}$.

\subsection{Polarizations}\label{SubsectionPolarizations}
Let $\mathrm{Pic}(A)$ be the Picard group of $A$ and $\mathrm{Pic}^0(A)$ be the connected component of the identity. For $L\in \mathrm{Pic}(A)$, we denote its Chern class by $c_1(L)\in H^2(A(\mathbb{C}),\mathbb{Z})$. Let $A^\vee$ be the dual abelian variety of $A$. Then $\mathrm{Pic}^0(A) = A^\vee(\IQbar)$.

For each $a \in A(\IQbar)$, let $t_a \colon A \rightarrow A$ be the translation-by-$a$ map. Given $L \in \mathrm{Pic}(A)$, it induces a group homomorphism between dual abelian varieties $\phi_L \colon A \rightarrow A^\vee$ by sending $a\in A(\IQbar)$ to $t_a^*L \otimes L^{\otimes -1}\in \mathrm{Pic}^0(A)$. 

When $L$ is ample, the homomorphism $\phi_L$ is moreover an isogeny, \textit{i.e.}\ a surjective group homomorphism with finite kernel, in which case we say $\phi_L$ is a \textit{polarization} of $A$. The polarization is called a \textit{principal polarization} if $\phi_L$ is an isomorphism.

The polarization is uniquely determined by the Chern class of the line bundle by the following lemma \cite[Theorem 6.10]{Debarre}:

\begin{lemma}\label{LemmaPolarizationChern}
Let $L$ and $L'$ be two ample line bundles on $A$. Then $L$ and $L'$ define the same polarization if and only if $c_1(L) = c_1(L')$.
\end{lemma}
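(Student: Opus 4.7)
The plan is to reduce the statement to the characterization of $\mathrm{Pic}^0(A)$ as the subgroup of translation-invariant line bundles on $A$. Setting $M := L \otimes (L')^{\otimes -1}$, the homomorphism property of $L \mapsto \phi_L$ (which follows from the theorem of the square, already invoked in the excerpt) gives $\phi_L = \phi_{L'}$ if and only if $\phi_M = 0$. Similarly, since $c_1$ is a group homomorphism, $c_1(L) = c_1(L')$ if and only if $c_1(M) = 0$, which by the definition of $\mathrm{Pic}^0(A)$ given in the paper is equivalent to $M \in \mathrm{Pic}^0(A)$. So the lemma is equivalent to the assertion $\phi_M = 0 \Longleftrightarrow M \in \mathrm{Pic}^0(A)$.

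For the direction $(\Leftarrow)$, I would use the standard fact that elements of $\mathrm{Pic}^0(A) = A^\vee(\IQbar)$ are exactly the translation-invariant line bundles on $A$: for every $a \in A(\IQbar)$ one has $t_a^* M \cong M$, hence $\phi_M(a) = t_a^* M \otimes M^{\otimes -1} \cong \mathcal{O}_A$, so $\phi_M = 0$.

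For the converse $(\Rightarrow)$, assuming $\phi_M = 0$ means $t_a^* M \cong M$ for all $a \in A(\IQbar)$, i.e., $M$ is translation-invariant. Invoking the same classical equivalence in the other direction yields $M \in \mathrm{Pic}^0(A)$, and hence $c_1(M) = 0$.

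The only nontrivial input is the characterization of $\mathrm{Pic}^0(A)$ as the group of translation-invariant line bundles, which is the place where the two a priori distinct descriptions of $\mathrm{Pic}^0(A)$ (kernel of $c_1$ versus $A^\vee(\IQbar)$) must be reconciled; this is the main obstacle in the sense that it is the only step not formally immediate from what is set up in the excerpt. Over $\mathbb{C}$ it follows from the Appell--Humbert theorem, and in general it is the content of the seesaw-based argument in Mumford's book, or equivalently of \cite[Thm.6.10]{Debarre} as cited in the paper.
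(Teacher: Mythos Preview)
Your argument is correct. The paper does not give its own proof of this lemma but simply cites \cite[Thm.6.10]{Debarre}; you have unpacked the standard argument behind that citation, correctly identifying the characterization of $\mathrm{Pic}^0(A)$ as the translation-invariant line bundles as the only substantive input.
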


A \textit{polarized abelian variety} $(A,L)$ is defined as an abelian variety $A$ equipped with an ample line bundle $L$ serving as a representative of the Chern class $c_1(L)$.

Write $A(\mathbb{C}) = \mathbb{C}^g/\Lambda$ for some lattice $\Lambda \subseteq \mathbb{C}^g$ of rank $2g$. There is a canonical isomorphism between $H^2(A(\mathbb{C}),\mathbb{Z})$ and $\mathrm{Alt}^2(\Lambda,\mathbb{Z})$, the group of $\mathbb{Z}$-bilinear  alternating forms $\Lambda \times \Lambda \rightarrow \mathbb{Z}$. Thus $c_1(L)$ defines a $\mathbb{Z}$-bilinear  alternating form
\begin{equation}\label{EqRiemannForm}
E \colon \Lambda \times \Lambda \rightarrow \mathbb{Z}.
\end{equation}

As $L$ is ample, $c_1(L)$ is positive definite, and hence $E$ is non-degenerate. So  there exists a basis $(\gamma_1,\ldots,\gamma_{2g})$ of $\Lambda$ under which the matrix of $E$ is
\begin{equation}\label{EqPolarizationType}
\begin{bmatrix}
0 & D \\
-D & 0
\end{bmatrix}
\end{equation}
where $D = \mathrm{diag}(d_1, \ldots, d_g)$ with $d_1|\cdots|d_g$ positive integers; see \cite[Proposition 6.1]{Debarre}. Moreover, the  matrix $D$ is uniquely determined. We say $D$ is the \textit{polarization type of $(A,L)$}. Define the \textit{Pfaffian} by
\[
\mathrm{Pf}(L) := \mathrm{det}(D).
\]

\begin{lemma}\label{LemmaAbIsogPPAV}
Let $L$ be an ample line bundle on $A$. Then
\begin{enumerate}
\item[(i)] $\dim H^0(A,L) = \mathrm{Pf}(L)$;
\item[(ii)] $\deg_L (A) = g! \cdot \mathrm{Pf}(L)$;
\item[(iii)] if $f \colon A' \rightarrow A$ be an isogeny, then $\dim H^0(A',f^*L) = \deg(f) \dim H^0(A,L)$;
\item[(iv)] there exists an abelian variety $A_0$, an ample line bundle $L_0$ on $A_0$ defining a principal polarization, and an isogeny $u_0 \colon A \rightarrow A_0$ such that $L\cong u_0^*L_0$; moreover, $\deg(u_0) = \deg_L(A) / g!$.
\end{enumerate}
\end{lemma}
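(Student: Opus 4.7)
The plan is to deduce all four parts from classical Riemann--Roch theory on abelian varieties together with Mumford's descent theorem for line bundles.

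For (i), I would invoke Riemann--Roch for abelian varieties: for any line bundle $L$ one has $\chi(A,L) = (L^g)/g!$, and when $L$ is ample the higher cohomology vanishes, so $\dim H^0(A,L) = \chi(A,L)$. The identity $\chi(A,L) = \mathrm{Pf}(L)$ is then the Frobenius--Poincaré computation: in a symplectic basis of $\Lambda$ adapted to the Riemann form $E$ as in \eqref{EqPolarizationType}, $\chi(A,L)$ is computed via theta functions to be $d_1 \cdots d_g = \det(D)$. This is for instance \cite[Ch.~3, \S 16]{MumfordAV} or \cite[Prop.~3.4 and Thm.~3.6]{Debarre}. Part (ii) is then immediate: $\deg_L(A)$ is by definition the self-intersection $(L^g)$, and combining with (i) we get $\deg_L(A) = g!\cdot \chi(A,L) = g!\cdot \mathrm{Pf}(L)$. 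For (iii), since $f$ is finite and surjective, $f^*L$ is ample; hence $\dim H^0(A',f^*L) = \chi(A',f^*L)$ by the same vanishing. The multiplicativity $\chi(A',f^*L) = \deg(f)\cdot \chi(A,L)$ holds for any finite morphism between smooth proper varieties of the same dimension (e.g.\ via the projection formula and $\mathrm{rk}\,f_*\mathcal{O}_{A'} = \deg(f)$, or directly from Hirzebruch--Riemann--Roch since $f^*$ multiplies intersection numbers by $\deg(f)$).

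The main obstacle is (iv), where the essential input is Mumford's descent theorem. Set $K(L) := \ker \phi_L$; this is a finite group scheme of order $\deg(\phi_L) = \mathrm{Pf}(L)^2$ equipped with the Weil pairing $e^L \colon K(L)\times K(L)\to \mathbb{G}_m$, which is a non-degenerate alternating form. I would pick a maximal isotropic subgroup $K_1 \subseteq K(L)$; elementary linear algebra on $(K(L),e^L)$ gives $\#K_1 = \mathrm{Pf}(L)$. Mumford's descent criterion \cite[\S 23]{MumfordAV} then says that $L$ descends along the isogeny $u_0 \colon A \to A_0 := A/K_1$ to a line bundle $L_0$ with $u_0^*L_0 \cong L$, precisely because $K_1$ is isotropic for $e^L$ and the action of $K_1$ on $L$ lifts the translation action trivially. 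The descended $L_0$ is ample since $u_0$ is finite surjective, and its theorem-of-the-square homomorphism satisfies $u_0^*\phi_{L_0}u_0 = \phi_L$, so that $\ker \phi_{L_0} = K(L)/K_1$, which is trivial by maximality of $K_1$. Hence $L_0$ defines a principal polarization. Finally, $\deg(u_0) = \#K_1 = \mathrm{Pf}(L) = \deg_L(A)/g!$ by (ii), giving the stated degree formula.
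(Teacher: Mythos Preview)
Your approach is correct and is exactly what the paper's proof does: for (i)--(iii) the paper simply says ``Riemann--Roch'' and cites Debarre, and for (iv) it cites \cite[p.~216, Cor.~1 and its proof]{MumfordAbVar}, which is precisely the maximal-isotropic-descent argument you outline.

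One slip in your sketch of (iv): the assertion $\ker\phi_{L_0} = K(L)/K_1$ is not right; if it were, this quotient would have order $\mathrm{Pf}(L)$, not $1$. The correct identification is $\ker\phi_{L_0} \cong K_1^{\perp}/K_1$ (orthogonal with respect to $e^L$), which is trivial exactly because a maximal isotropic subgroup satisfies $K_1 = K_1^{\perp}$. Alternatively, bypass this entirely by counting degrees: from $\phi_L = u_0^{\vee}\circ\phi_{L_0}\circ u_0$ one gets $\deg\phi_{L_0} = \deg\phi_L / (\deg u_0)^2 = \mathrm{Pf}(L)^2/\mathrm{Pf}(L)^2 = 1$, or use your own (iii) to get $\mathrm{Pf}(L_0) = \mathrm{Pf}(L)/\deg(u_0) = 1$.
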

\begin{proof}
For (i), (ii) and (iii), it suffices to prove the assertions over $\mathbb{C}$. Then (i) is \cite[Corollary 3.2.8]{CAV}, (ii) is just the Riemann--Roch theorem \cite[Section 3.6]{CAV} and (iii) is \cite[Cororollary 6 to Proposition 6.12]{Debarre}. 
(iv) is \cite[pp.\ 234, Corollary 1 and its proof]{MumfordAbVar}.
\end{proof}

We often work with \textit{symmetric} ample line bundles for Néron-Tate heights. For this purpose, we need the following lemma.
\begin{lemma}\label{LemmaSymAmpleFirstChernTorsion}
Let $L$ and $L'$ be two symmetric ample line bundles on $A$. If $c_1(L') = c_1(L)$, then $\hat{h}_L = \hat{h}_{L'}$.
\end{lemma}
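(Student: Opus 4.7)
The plan is to reduce the identity $\hat{h}_L = \hat{h}_{L'}$ to showing that $M := L \otimes L'^{\otimes -1}$ contributes zero to the canonical height. First I would observe that the hypothesis $c_1(L) = c_1(L')$ means $M$ lies in the kernel of $\mathrm{Pic}(A) \to H^2(A(\mathbb{C}),\mathbb{Z})$, i.e. $M \in \mathrm{Pic}^0(A)$. Since both $L$ and $L'$ are symmetric, $M$ is symmetric as well: $[-1]^*M \cong [-1]^*L \otimes [-1]^*L'^{\otimes -1} \cong L \otimes L'^{\otimes -1} = M$.

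Next I would invoke the classical fact that on $\mathrm{Pic}^0(A)$, the involution $[-1]^*$ acts as inversion: $[-1]^*N \cong N^{\otimes -1}$ for every $N \in \mathrm{Pic}^0(A)$. Combined with $[-1]^*M \cong M$, this forces $M^{\otimes 2} \cong \cO_A$, so $M$ is a $2$-torsion element of $\mathrm{Pic}^0(A)$.

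Then I would appeal to the standard properties of N\'{e}ron--Tate canonical heights: $\hat{h}_{\bullet}$ is additive in the line bundle (in the sense $\hat{h}_{N_1 \otimes N_2} = \hat{h}_{N_1} + \hat{h}_{N_2}$), and for any symmetric line bundle $N$ one has $\hat{h}_{N^{\otimes n}} = n \hat{h}_N$ (this is immediate from the Tate limit construction, since $[2]^*N \cong N^{\otimes 4}$ for symmetric $N$). Applying this to $M$, which is symmetric and satisfies $M^{\otimes 2} \cong \cO_A$, yields $2\hat{h}_M = \hat{h}_{M^{\otimes 2}} = \hat{h}_{\cO_A} = 0$, hence $\hat{h}_M \equiv 0$. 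Combining with additivity, $\hat{h}_L = \hat{h}_{L'} + \hat{h}_M = \hat{h}_{L'}$.

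There is no real obstacle here: the argument is a straightforward combination of the classification of symmetric elements of $\mathrm{Pic}^0(A)$ as $2$-torsion and the basic functorial properties of canonical heights. The only point that deserves explicit mention in the write-up is the identity $[-1]^*N \cong N^{\otimes -1}$ on $\mathrm{Pic}^0(A)$, which follows from the fact that $\mathrm{Pic}^0(A) = A^\vee(\IQbar)$ and $[-1]_A$ induces $[-1]_{A^\vee}$ under duality.
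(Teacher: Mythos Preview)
Your proof is correct and takes a genuinely different route from the paper's. The paper writes $L' \cong t_a^* L$ for some $a \in A(\IQbar)$ (using that $\phi_L$ is an isogeny since $L$ is ample), observes $\hat{h}_{L'}(x) = \hat{h}_L(a+x)$, and then shows $a$ is torsion by evaluating at a torsion point and using positive-definiteness of $\hat{h}_L$. Your argument instead works directly with $M = L \otimes L'^{-1}$: the symmetry of $M$ together with $[-1]^*N \cong N^{-1}$ on $\mathrm{Pic}^0(A)$ forces $M^{\otimes 2} \cong \cO_A$, and then additivity of the canonical height finishes it. Your approach is more elementary in that it never uses ampleness (neither surjectivity of $\phi_L$ nor positive-definiteness of $\hat{h}_L$) and yields the sharper conclusion that $M$ is $2$-torsion rather than merely torsion; the paper's approach is perhaps more geometric in flavor but relies on features specific to the ample case that are not actually needed for the statement.
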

\begin{proof}
We have $c_1(L' \otimes L^{\otimes -1}) = c_1(L') - c_1(L) = 0$. So $L' \otimes L^{\otimes -1} \in \mathrm{Pic}^0(A) = A^\vee(\IQbar)$. The homomorphism $\phi_L \colon A \rightarrow A^\vee$, $a \mapsto t_a^*L \otimes L^{\otimes -1}$, is an isogeny because $L$ is ample. So there exists $a \in A(\IQbar)$ such that $\phi_L(a) = L' \otimes L^{\otimes -1}$. Thus $L' \cong t_a^*L$. 
Therefore for each $x \in A(\IQbar)$, we have
\begin{equation}\label{EqHeightTranslation}
\hat{h}_{L'}(x) = \hat{h}_{t_a^*L}(x) = \hat{h}_L(a+x). 
\end{equation}
Taking $x=0$,
$x \in A(\IQbar)_{\mathrm{tor}}$, we get $\hat{h}_L(a) = 0$ for the symmetric ample line bundles $L$ and $L'$. But then $a\in A(\IQbar)_{\mathrm{tor}}$ since $L$ is symmetric ample, and hence \eqref{EqHeightTranslation} yields $\hat{h}_{L'}(x) = \hat{h}_{L}(a+x)=\hat{h}_L(x)$ for all $x \in A(\IQbar)$.
\end{proof}

\subsection{Degree estimates}
The degree of a closed subvariety $X$ of $A$ with respect to an ample line bundle $L$ is defined as follows. If $X$ is irreducible, set $\deg_L X := c_1(L)^{\dim X}\cap [X]$ where $[X]$ is the cycle of $A$ given by $X$. For general $X$, set $\deg_L X := \sum_i \deg_L X_i$ where $X = \bigcup X_i$ is the decomposition into irreducible components. We use the notation from \cite[Chapters 1 and 2]{Fulton} freely in the following.

\begin{lemma}
Assume that $L$ is very ample. Let $Y$ and $Y'$ be irreducible subvarieties of $A$. Then 
\begin{equation}\label{EqDegreeSum}
\deg_L(Y+Y') \le 4^{\dim Y + \dim Y'} \deg_L Y \cdot \deg_L Y'.
\end{equation}
\end{lemma}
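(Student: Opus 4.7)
The plan is to bound $\deg_L(Y+Y')$ via intersection theory on $A \times A$. Set $n := \dim Y + \dim Y'$ and $d := \dim(Y+Y')$, and let $\sigma, \delta : A \times A \to A$ be the sum and difference maps with projections $p_1, p_2$. Write $\alpha := p_1^* c_1(L)$ and $\beta := p_2^* c_1(L)$ in $H^2(A \times A, \mathbb{Z})$, and let $\mu := \sigma|_{Y \times Y'}$, whose image is $Y+Y'$.

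The key ingredient is the cohomological identity
\[
\sigma^* c_1(L) + \delta^* c_1(L) \;=\; 2\alpha + 2\beta \quad \text{in } H^2(A \times A, \mathbb{Z}).
\]
To prove it I would note that $[-1]_A^*$ acts trivially on $H^2(A)$, so $c_1(L) = c_1(L_-)$ and $M := L \otimes L_-$ is symmetric ample with $c_1(M) = 2 c_1(L)$. For this symmetric $M$, a seesaw/cube argument shows $\sigma^* M \otimes \delta^* M \cong p_1^* M^{\otimes 2} \otimes p_2^* M^{\otimes 2}$ as line bundles on $A \times A$ (the restrictions to $A \times \{0\}$ and $\{0\} \times A$ are trivial, using $[-1]^*M \cong M$); taking first Chern classes and dividing by $2$ gives the identity. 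Since $\delta$ is surjective and $L$ is ample, $\delta^* c_1(L)$ is the class of a nef line bundle, so both $\sigma^* c_1(L)$ and $2\alpha + 2\beta - \sigma^* c_1(L)$ are nef; the standard telescoping
\[
(2\alpha+2\beta)^k - (\sigma^* c_1(L))^k \;=\; (2\alpha+2\beta - \sigma^* c_1(L)) \sum_{i=0}^{k-1}(\sigma^* c_1(L))^i (2\alpha+2\beta)^{k-1-i}
\]
then yields the nef comparison $(\sigma^* c_1(L))^k \cdot Z \le (2\alpha+2\beta)^k \cdot Z$ for every effective $k$-cycle $Z$ on $A \times A$.

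To conclude, I would choose $n - d$ general members $H_1, \dots, H_{n-d}$ of the very ample linear system on $Y \times Y'$ defined by $\alpha + \beta$ (very ample because $L$ is, via Segre); by Bertini in characteristic $0$, their intersection $W$ is an irreducible subvariety of $Y \times Y'$ of dimension $d$, and by genericity $\mu|_W : W \to Y+Y'$ is surjective and generically finite, so
\[
\deg_L(Y+Y') \;\le\; \int_W \mu^* c_1(L)^d \;=\; (\sigma^* c_1(L))^d \cdot (\alpha+\beta)^{n-d} \cdot [Y \times Y'].
\]
Applying the nef comparison to the effective cycle $(\alpha+\beta)^{n-d} \cdot [Y \times Y']$ bounds this by $2^d (\alpha+\beta)^n \cdot [Y \times Y']$. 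Finally, a K\"unneth/Fubini expansion retains only the $\binom{n}{\dim Y}\alpha^{\dim Y}\beta^{\dim Y'}$ term on $Y \times Y'$, yielding $(\alpha+\beta)^n \cdot [Y \times Y'] = \binom{n}{\dim Y}\deg_L Y \cdot \deg_L Y'$; since $2^d \le 2^n$, the claimed bound follows.

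The main obstacle is establishing the cohomological identity and deducing the nef comparison; the Bertini cut-down is routine and is only needed to cover the degenerate case $d < n$ in which $\mu$ itself fails to be generically finite.
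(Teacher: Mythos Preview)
Your proof is correct and rests on the same cohomological identity as the paper's, namely $\sigma^*c_1(L)+\delta^*c_1(L)=2\bigl(p_1^*c_1(L)+p_2^*c_1(L)\bigr)$, but the two arguments package the endgame differently. The paper encodes this identity via the isogeny $\alpha\colon A\times A\to A\times A$, $(x,y)\mapsto(x+y,x-y)$, for which $c_1(\alpha^*L^{\boxtimes 2})=2c_1(L^{\boxtimes 2})$; it then bounds $\deg_L(Y+Y')$ by the two-step chain $\deg_L\bigl(p_1(\alpha(Y\times Y'))\bigr)\le \deg_{L^{\boxtimes 2}}\bigl(\alpha(Y\times Y')\bigr)\le \deg_{\alpha^*L^{\boxtimes 2}}(Y\times Y')$, the first inequality being a projection bound cited from \cite[Lem.~2.4]{DillTorsionIsogenousAV} and the second coming from the finiteness of $\alpha$. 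You instead stay on $Y\times Y'$, exploit that $\delta^*c_1(L)$ is nef to get the pointwise comparison $(\sigma^*c_1(L))^d\le (2\alpha+2\beta)^d$ on effective cycles, and handle the possibly positive-dimensional fibres of $\mu$ by a Bertini cut. Your route is a bit longer but entirely self-contained (no external projection lemma), and your nef-telescoping step is in effect a direct proof of what the paper cites. One small remark: for the surjectivity of $\mu|_W$ you do not actually need irreducibility of $W$ or even genericity of the $H_i$; since every fibre of $\mu$ has dimension $\ge n-d$, any $n-d$ members of the very ample system meet each fibre, so the cut always surjects, and genericity is only needed to ensure $[W]=(\alpha+\beta)^{n-d}\cdot[Y\times Y']$ and $\dim W=d$.
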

\begin{proof}

Write $p_i \colon A \times A \rightarrow A$, $i = 1, 2$, for the natural projection to the $i$-th factor. Then $L^{\boxtimes 2}:=p_1^*L \otimes p_2^*L$ is an ample line bundle on $A\times A$ and we have $c_1(L^{\boxtimes 2}) = p_1^*c_1(L) + p_2^*c_1(L)$. For readability, write $d$ and $d^\prime$ for the dimension of $Y$ and $Y^\prime$, respectively. By \cite[Prop.~2.5 and Rmk.~2.5.3]{Fulton}, it follows that
\begin{align}
	\label{eqn::intersection_numbers}
	\deg_{L^{\boxtimes 2}}(Y \times Y')
	&= c_1(L^{\boxtimes 2})^{d + d'} \cap [Y\times Y'] \\ \nonumber
	&= \sum_{i=0}^{d + d'}\binom{d + d'}{i} c_1(p_1^* L)^i \cap c_1(p_2^*L)^{d+d'-i} \cap [Y\times Y'] \\ \nonumber
	&= \sum_{i=0}^{d + d'}\binom{d + d'}{i} (c_1(p_1^* L)^i \cap [Y]) \times (c_1(p_2^*L)^{d+d'-i} \cap [Y']).
\end{align}
For reasons of dimension, the only non-vanishing term in this sum is  $i = d$. So
\begin{equation}\label{EqDegreeProduct}
\deg_{L^{\boxtimes 2}}(Y \times Y') = \binom{d + d'}{d}\deg_L Y \cdot \deg_L Y'.
\end{equation}

Consider the isogeny
\[
\alpha \colon A \times A \longrightarrow A \times A, \quad (x,y) \longmapsto (x+y, x-y),
\]
of degree $2^{2g}$. We recall that $c_1(\alpha^* L^{\boxtimes 2}) = 2c_1(L^{\boxtimes 2})$ by \cite[Prop.~A.7.3.3]{Hindry2000}. By \eqref{EqDegreeProduct}, it follows that
\begin{equation*}\label{EqDegreeProduct2}
\deg_{\alpha^* L^{\boxtimes 2}}(Y \times Y') =2^{d + d'} \binom{d + d'}{d}\deg_L Y \cdot \deg_L Y'\leq 4^{d+ d'}\deg_L Y \cdot \deg_L Y'.
\end{equation*}
We are ready to prove \eqref{EqDegreeSum}. Indeed, using $p_1(\alpha(Y \times Y')) = Y+Y'$ we obtain
\begin{align*}
	\deg_L(Y+Y') 
	\le \deg_{p_1^*L \otimes p_2^*L}(\alpha(Y\times Y')) 
	\le \deg_{\alpha^* p_1^*L \otimes \alpha^* p_2^*L}(Y\times Y') 
	= \deg_{\alpha^* L^{\boxtimes 2}}(Y \times Y');
\end{align*}
the first inequality follows for example from \cite[Lem.~2.4]{DillTorsionIsogenousAV} and the second one from the projection formula \cite[Prop.~2.5(c)]{Fulton}. We conclude the proof by combining the last two inequalities and using $\binom{d+d^\prime}{d} \leq 2^{d+d^\prime}$.
\end{proof}

\begin{lemma}\label{LemmaSmallestAbVarGenerated}
Let $X$ be an irreducible subvariety of $A$ and let $A'$ denote the abelian subvariety generated by $X-X$. Then $\deg_L A' \ll_g  \deg_L(X)^{2g}$.
\end{lemma}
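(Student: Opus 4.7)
The plan is to realize $A'$ as the iterated sumset
\[
W_{k_0} := \underbrace{(X-X) + \cdots + (X-X)}_{k_0 \text{ summands}}
\]
for some $k_0 \le g$, and then iteratively apply the degree estimate \eqref{EqDegreeSum}. The dimensional bound on $k_0$ is what ultimately produces the exponent $2g$.

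For each $k \ge 1$ set $W_k$ as above. As the image of $X^{2k}$ under the signed sum morphism, each $W_k$ is irreducible; it also contains $0$ and is symmetric because $-(X-X) = X-X$. The chain $W_1 \subseteq W_2 \subseteq \cdots$ of irreducible subvarieties of $A$ has non-decreasing dimension bounded by $g$, so there is a least $k_0 \le g$ at which the dimensions stabilize. Irreducibility then forces $W_{k_0} = W_{k_0+1}$, i.e.\ $W_{k_0} + (X-X) = W_{k_0}$; by induction on $j$ one gets $W_{k_0} + W_j = W_{k_0}$ for every $j \ge 0$, so in particular $W_{k_0}$ is closed under addition. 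Together with its symmetry and $0 \in W_{k_0}$, this forces $W_{k_0}$ to be an abelian subvariety of $A$. Since it contains $X-X$ and is contained in the group-theoretic $A'$, we conclude $A' = W_{k_0}$.

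To estimate degrees, first bound $\deg_L(X-X)$. Since $[-1] \colon A \to A$ acts trivially on $H^2(A(\mathbb{C}),\mathbb{Z})$, one has $c_1([-1]^*L) = c_1(L)$ and hence $\deg_L(-X) = \deg_L X$. After replacing $L$ by $L^{\otimes 3}$ to ensure very ampleness (costing only a factor depending on $g$), \eqref{EqDegreeSum} applied to $X + (-X)$ yields $\deg_L(X-X) \le c_1(g)(\deg_L X)^2$. Applying \eqref{EqDegreeSum} again to $W_{k+1} = W_k + (X-X)$, and using $\dim W_k, \dim(X-X) \le 2g$ to absorb the combinatorial factor into a constant depending only on $g$, gives
\[
\deg_L W_{k+1} \le c_2(g)\cdot \deg_L W_k \cdot \deg_L(X-X).
\]
Iterating from $k=1$ up to $k_0 \le g$ and using $\deg_L X \ge 1$ to inflate the exponent to $2g$ when $k_0 < g$ yields
\[
\deg_L A' = \deg_L W_{k_0} \le c_2(g)^{k_0-1} c_1(g)^{k_0} (\deg_L X)^{2k_0} \le c(g)(\deg_L X)^{2g}.
\]

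The main point to watch is keeping the exponent linear in $g$: a naive iteration based on $W_{k+1} = W_k + W_k$ would square the degree at every step and produce $(\deg_L X)^{2^g}$. Writing $W_{k+1} = W_k + W_1$ instead multiplies by the fixed quantity $\deg_L(X-X)$ at each step, which is precisely what makes the exponent $2g$ work. No other step presents real difficulty.
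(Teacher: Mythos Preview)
Your proof is correct and follows essentially the same approach as the paper: both realize $A'$ as a sum of at most $g$ copies of $X-X$ and then iterate the degree estimate \eqref{EqDegreeSum}, adding one copy of $X-X$ at a time to keep the exponent linear in $g$. Your argument is in fact slightly more detailed than the paper's, since you spell out why $W_{k_0}$ is an abelian subvariety (the paper just asserts ``by dimension reasons'') and explicitly flag the pitfall of doubling $W_k$ instead of adding $W_1$.
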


\begin{proof} Replacing $L$ by $L^{\otimes 3}$, we may and do assume that $L$ is very ample. We write $r = \dim X$. 
	
The ascending chain
\begin{equation*}
	(X - X) \subseteq (X-X) + (X-X) \subseteq (X-X) + (X-X) + (X-X) \subseteq \cdots
\end{equation*}
of closed irreducible subvarieties becomes stationary as soon as two consecutive elements are equal. By dimension reasons, we infer that
\begin{equation*}
	A^\prime = \underbrace{(X-X)+(X-X)+\cdots+(X-X)}_{\text{$g$ copies}}
\end{equation*}
so the chain certainly becomes stationary after at most $g$ steps.

To conclude the proof, we claim the inequality
\begin{equation*}
\deg_L( \underbrace{(X-X) + \cdots + (X-X)}_{\text{$k$ copies}} ) \le 8^{2^k r} \deg_L(X)^{2g}
\end{equation*}
for every $k \geq 1$. For $k=1$, we apply \eqref{EqDegreeSum} to $Y = X$ and $Y' = -X$ and get
\[
\deg_L(X-X) \le 8^{r} (\deg_L X)^2.
\]
Similarly, we get
\begin{equation*}
	\deg_L( \underbrace{(X-X) + \cdots + (X-X)}_{\text{$k$ copies}} )
	\leq
		8^{kr} \cdot \deg_L( \underbrace{(X-X) + \cdots + (X-X)}_{\text{$(k-1)$ copies}} ) \cdot \deg_L(X - X)	 
\end{equation*}
for every integer $k \geq 2$. Combining these inequalities, we obtain
\begin{equation*}
	\deg_L(A^\prime) \leq 8^{(g+\dots + 2)r} \deg_L(X - X)^g < 8^{(g+1)^2r}\deg_L(X)^{2g},
\end{equation*}
whence the assertion of the lemma.
\end{proof}

\section{Hilbert schemes and non-degeneracy}\label{SectionBasicSetupNonDegeneracy}

Let $r \ge 1$ and $d \ge 1$ be two integers. In this section, we work over $\IQbar$, \textit{i.e.} all objects and morphisms are defined over $\IQbar$ unless stated otherwise.

Our goal is to introduce the \textit{restricted} Hilbert schemes \eqref{EqHilbSchemeOpenLocusFamily} and prove a non-degeneracy result, Proposition~\ref{PropNonDeg}. This is one of the main new ingredients to prove the uniform Mordell--Lang Conjecture for higher dimensional subvarieties of abelian varieties compared to the case of curves.

As we will work with Hilbert schemes, we make the following convention. All schemes are assumed to be separated and of finite type over the base. By a \textit{variety} defined over $\IQbar$, we mean a reduced scheme over $\IQbar$. Hence an \textit{integral scheme} (\textit{i.e.\ }a reduced irreducible scheme) is the same as an \textit{irreducible variety}.

For general information on Hilbert schemes and Hilbert polynomials, we refer to \cite{FGAHilb}, \cite[Chap.~9]{ACG:Curve} and \cite[$\mathsection$I.1]{kollar1999rational}.

\subsection{Parameterizing subvarieties of an abelian variety}\label{SubsectionHilbAbVar}
Let $A$ be an abelian variety and let $L$ be a very ample line bundle. All degrees below will be with respect to $L$.

The following result can be deduced from \cite[Thm.~2.1(b) and Lem.~2.4]{FGAHilb}, which itself is proved by comparing the Hilbert scheme and the Chow variety: There is a finite set $\Xi$ of polynomials such that the Hilbert polynomial of any irreducible subvariety of $A$ of dimension $r$ and degree $d$ is an element of $\Xi$. 
Indeed, in the notation of \cite[Thm.~2.1(b) and Lem.~2.4]{FGAHilb}, we can take $X = A$, $S = \mathrm{Spec}(\IQbar)$, $\mathcal O_X(1) = L$, $K = \IQbar$, $Y \subseteq A$ the irreducible subvarieties of degree $d$, $\mathfrak F$ the structure sheaf of such a subvariety $Y$, and $E$ the set of the classes of all these structure sheaves.

Let $\mathbf{H}_{r,d}(A) := \bigcup_{P\in \Xi} \mathbf{H}_P(A)$, where $\mathbf{H}_P(A)$ is the Hilbert scheme that parameterizes subschemes of $A$ with Hilbert polynomial $P$ and is constructed in \cite[Thm.~3.2]{FGAHilb}. As $\Xi$ is a finite set, $\mathbf{H}_{r,d}(A)$ is a projective variety over $\IQbar$.
By construction, there exists a universal family $\mathscr{X}_{r,d}(A) \rightarrow \mathbf{H}_{r,d}(A)$ 
endowed with a natural closed $\mathbf{H}_{r,d}(A)$-immersion
\begin{equation}\label{EqUnivFamily}
\xymatrix{
\mathscr{X}_{r,d}(A) \ \ar@{^(->}[r] \ar[rd] & A \times \mathbf{H}_{r,d}(A) \ar[d]^-{\pi_A} \\
& \mathbf{H}_{r,d}(A)
}
\end{equation}
where $\pi_A$ is the projection to the second factor. Over each point $s \in \mathbf{H}_{r,d}(A)(\IQbar)$, the fiber $\mathscr{X}_{r,d}(A)_s$ is precisely the subscheme of $A$ parametrized by $s$, and the horizontal immersion restricts to its natural closed immersion in $A$. 

By the definition of Hilbert schemes, the morphism $\pi_A|_{\mathscr{X}_{r,d}(A)}$ is flat and proper. By \cite[Thm.~12.2.4.(viii)]{EGAIV},
\begin{equation}\label{EqHilbOpenLocus}
\{s \in \mathbf{H}_{r,d}(A) : \mathscr{X}_{r,d}(A)_s\text{ is geometrically integral}\}
\end{equation}
is a Zariski open subset of $\mathbf{H}_{r,d}(A)$, which we endow with the induced subscheme structure. We write $\mathbf{H}_{r,d}(A)^{\circ}$ for the maximal reduced subscheme of this scheme. Note that $\mathbf{H}_{r,d}(A)^{\circ}$ is a quasi-projective variety defined over $\IQbar$. Its points $\mathbf{H}_{r,d}(A)^{\circ}(\IQbar)$ parametrize all irreducible $\IQbar$-subvarieties $X$ of $A$ with $\dim X = r$ and $\deg_L X = d$ (by our choice of $\Xi$ above). 
For each irreducible component $V$ of $\mathbf{H}_{r,d}(A)$, the intersection $V \cap \mathbf{H}_{r,d}(A)^{\circ}$ is either a dense Zariski open in $V$ or  empty.

\begin{lemma}\label{LemmaChowVar0Dim}
Let $V$ be a (not necessarily irreducible) subvariety of $\mathbf{H}_{r,d}(A)^{\circ}$. Then there exist $m_0 = m_0(V)$ points $P_1, \ldots, P_{m_0} \in A(\IQbar)$ such that the Zariski closed subset of $V$ defined by
\[
\{[X] \in V(\IQbar) : P_1 \in X(\IQbar), \ldots, P_{m_0} \in X(\IQbar) \}
\]
has dimension $0$, \textit{i.e.}\ is a non-empty finite set.
\end{lemma}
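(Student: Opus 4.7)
The plan is to proceed by induction on the pair $(\dim V, \nu(V))$ in the lexicographic order, where $\nu(V)$ denotes the number of top-dimensional irreducible components of $V$. The base case $\dim V = 0$ is immediate: $V$ is then a non-empty finite set of $\IQbar$-points, and one takes $m_0 = 0$. For the inductive step I assume $\dim V = n \ge 1$ and fix an irreducible component $V_1$ of $V$ of dimension $n$.

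The key construction is to restrict the universal family \eqref{EqUnivFamily} to $V_1$: the resulting morphism $\mathscr{X}_{V_1} \to V_1$ is flat, proper, of relative dimension $r$, with geometrically integral fibers since $V_1 \subseteq \mathbf{H}_{r,d}(A)^{\circ}$. In particular $\dim \mathscr{X}_{V_1} = n + r$, and $\mathscr{X}_{V_1}$ is irreducible because $V_1$ is integral and the geometric generic fiber is integral. Let $p \colon \mathscr{X}_{V_1} \hookrightarrow A \times V_1 \to A$ be the projection to $A$ and set $W := \overline{p(\mathscr{X}_{V_1})}$, an irreducible subvariety of $A$. Each fiber of $\mathscr{X}_{V_1} \to V_1$ is an irreducible $r$-dimensional subvariety of $W$, so $\dim W \ge r$; if equality held, every such fiber would coincide with $W$, forcing $V_1$ to be a single point and contradicting $n \ge 1$. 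Hence $\dim W \ge r+1$. By Chevalley's upper semicontinuity of fiber dimension, a non-empty Zariski open subset $U \subseteq W$ consists of points $P$ with $\dim p^{-1}(P) \le n + r - \dim W \le n-1$. Choosing any $P_1 \in U(\IQbar)$, the set $V_1' := \{[X] \in V_1(\IQbar) : P_1 \in X\}$ is a non-empty closed subset of $V_1$ of dimension at most $n-1$.

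To close the induction I form $V' := \{[X] \in V(\IQbar) : P_1 \in X\}$ with its reduced induced structure. It is non-empty since it contains $V_1'$, and its irreducible components of dimension $n$ can only be those other top-dimensional components $V_i$ (with $i \ne 1$) that happen to be entirely contained in $\{[X] : P_1 \in X\}$. In particular $(\dim V', \nu(V')) < (\dim V, \nu(V))$ lexicographically, so the inductive hypothesis applied to $V'$ yields points $P_2, \ldots, P_{m_0} \in A(\IQbar)$ making $\{[X] \in V'(\IQbar) : P_i \in X \text{ for } i \ge 2\}$ non-empty and finite; this set equals $\{[X] \in V(\IQbar) : P_1, \ldots, P_{m_0} \in X\}$, completing the induction.

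The genuinely delicate point—and the reason for the bivariate invariant rather than a naïve induction on $\dim V$ alone—is preserving non-emptiness of the slice throughout the recursion. A single auxiliary point $P_1 \in A$ chosen at random could produce an empty intersection; choosing $P_1$ inside the image $W$ of the universal family restricted to a fixed top-dimensional component $V_1$ is precisely what guarantees both non-emptiness and the required decrease of the invariant at each step.
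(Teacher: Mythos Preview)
Your proof is correct and takes a genuinely different route from the paper's. The paper fixes one $[X_0] \in V(\IQbar)$ at the outset and chooses \emph{all} auxiliary points from $X_0(\IQbar)$; this makes non-emptiness automatic, since $[X_0]$ survives every slice. To drop dimension at step $k$, for each positive-dimensional component $W$ of the current slice the paper picks some $[X_W] \in W$ with $X_W \neq X_0$ and a point $P_W \in (X_0 \setminus X_W)(\IQbar)$; the slice of $W$ by $P_W$ omits $[X_W]$ and hence has strictly smaller dimension. Several points may be added in one step (one per component), and the induction is simply on $k \le \dim V$. Your argument instead adds one point per step, chosen as a generic point of the image in $A$ of the universal family over a fixed top-dimensional component $V_1$, and uses the fiber-dimension theorem to force the slice of $V_1$ to drop; the bivariate invariant $(\dim V, \nu(V))$ is the price for this granularity, since other top-dimensional components may survive intact. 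The paper's approach is shorter and entirely elementary (no fiber-dimension theorem, no constructibility argument), with non-emptiness built in; yours leans more on the geometry of the universal family and isolates exactly one new point per step, which could be convenient if one wanted explicit control over $m_0$. One small point: when you pick $P_1 \in U(\IQbar)$, you should note that the generic-fiber-dimension theorem for the dominant morphism $p \colon \mathscr{X}_{V_1} \to W$ also ensures $p^{-1}(P_1) \neq \emptyset$ over a dense open of $W$, so that $V_1'$ is indeed non-empty.
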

\begin{proof}
Fix $[X_0] \in V(\IQbar)$. For each $k \in \{0, \ldots, \dim V\}$, we claim the following statement: \textit{There are finitely many points $P_{k,1},\ldots,P_{k, n_k}$ in $X_0(\IQbar)$ such that $$V_k := \{[X] \in V(\IQbar) : P_{k,1} \in X(\IQbar), \ldots, P_{k,n_k} \in X(\IQbar) \}$$ has dimension $\le \dim V-k$.}

\medskip

Taking $k = \dim V$, this claim immediately yields the lemma; the set $V_{\dim V}$ thus obtained is non-empty since it contains $[X_0]$.

We prove this claim by induction on $k$. The base step $k = 0$ trivially holds true because we can take any finite set of points in $X_0(\IQbar)$.

Assume the claim holds true for $0, \ldots, k-1$. We have thus obtained points $P_{k-1,1},\ldots,$ $P_{k-1, n_{k-1}} \in X_0(\IQbar)$ such that $$V_{k-1} := \{[X] \in V(\IQbar): P_{k-1, i} \in X(\IQbar)\text{ for all }i=1,\ldots,n_{k-1} \}$$ has dimension $\le \dim V-k+1$.

For each irreducible component $W$ of $V_{k-1}$ with $\dim W > 0$, there exists some $[X_W] \in W(\IQbar)$ such that $X_W \not= X_0$ as (irreducible) subvarieties of $A$. Take $P_W \in (X_0 \setminus X_W)(\IQbar)$. Then $\{[X] \in W(\IQbar) : P_W \in X(\IQbar)\}$ has dimension $\le \dim W - 1 \le \dim V-k+1-1 = \dim V -k$.

Thus it suffices to take $\{P_{k,1},\ldots,P_{k,n_k}\} := \{P_{k-1,1},\ldots,$ $P_{k-1, n_{k-1}}\} \cup (\bigcup_W \{P_W\})$ with $W$ running over all positive dimensional irreducible components of $V_{k-1}$.
\end{proof}

For each $m \ge 1$, set $\mathscr{X}_{r,d}(A)^{[m]} := \mathscr{X}_{r,d}(A) \times_{\mathbf{H}_{r,d}(A)} \ldots \times_{\mathbf{H}_{r,d}(A)} \mathscr{X}_{r,d}(A)$. Then \eqref{EqUnivFamily} induces
\begin{equation}\label{EqUnivFamilyPower}
\xymatrix{
\mathscr{X}_{r,d}(A)^{[m]} \times_{\mathbf{H}_{r,d}(A)} \mathbf{H}_{r,d}(A)^{\circ} \ \ar@{^(->}[r] \ar[rd] & A^m \times \mathbf{H}_{r,d}(A)^{\circ} \ar[d]^-{\pi_A^{[m]}}  \ar[r]^-{\iota_A^{[m]}} & A^m \\
& \mathbf{H}_{r,d}(A)^{\circ}
}
\end{equation}
where $\iota_A^{[m]}$ is the natural projection. 

\begin{lemma}\label{LemmaGenericallyFiniteFiber}
Let $V$ be a (not necessarily irreducible) subvariety of $\mathbf{H}_{r,d}(A)^{\circ}$. Then there exists $m_0 = m_0(V) \ge 1$ with the following property. For each $m \ge m_0$, there exists $\underline{P} \in A^m(\IQbar)$ such that 
$$(\iota_A^{[m]}|_{\mathscr{X}_{r,d}(A)^{[m]}\times_{\mathbf{H}_{r,d}(A)}V})^{-1}(\underline{P})$$ has dimension $0$, \textit{i.e.}\ is a non-empty finite set.
\end{lemma}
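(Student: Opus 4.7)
\medskip

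The plan is to deduce Lemma~\ref{LemmaGenericallyFiniteFiber} directly from Lemma~\ref{LemmaChowVar0Dim}, after unwinding the definition of the fiber product.

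First, I would unwind what a fiber of $\iota_A^{[m]}|_{\mathscr{X}_{r,d}(A)^{[m]}\times_{\mathbf{H}_{r,d}(A)}V}$ looks like. A $\IQbar$-point of $\mathscr{X}_{r,d}(A)^{[m]} \times_{\mathbf{H}_{r,d}(A)} V$ is a tuple $(s;Q_1,\ldots,Q_m)$ with $s \in V(\IQbar)$ and $Q_1,\ldots,Q_m \in \mathscr{X}_{r,d}(A)_s(\IQbar)$, \textit{i.e.}\ all lying on the irreducible subvariety $X_s \subseteq A$ parametrized by $s$. The map $\iota_A^{[m]}$ sends this to $(Q_1,\ldots,Q_m) \in A^m(\IQbar)$. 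Therefore for any $\underline{P} = (P_1,\ldots,P_m) \in A^m(\IQbar)$, the fiber is in bijection with
\[
\{s \in V(\IQbar) : P_1,\ldots,P_m \in X_s(\IQbar)\},
\]
via $s \mapsto (s;P_1,\ldots,P_m)$.

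Next, I would apply Lemma~\ref{LemmaChowVar0Dim} to $V$ to obtain the integer $m_0$ and points $P_1,\ldots,P_{m_0} \in A(\IQbar)$ such that $\{[X] \in V(\IQbar) : P_1,\ldots,P_{m_0} \in X(\IQbar)\}$ is a non-empty finite set. Inspecting the proof of that lemma, one sees that the points $P_1,\ldots,P_{m_0}$ can in fact be chosen to all lie in $X_0(\IQbar)$ for some prescribed $[X_0] \in V(\IQbar)$, and that the resulting finite set contains $[X_0]$. For $m=m_0$ and $\underline{P}=(P_1,\ldots,P_{m_0})$, the fiber is therefore non-empty and finite, giving the conclusion.

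For $m > m_0$, I would simply extend the tuple by choosing arbitrary additional points $P_{m_0+1},\ldots,P_m \in X_0(\IQbar)$. Then the fiber over $\underline{P}=(P_1,\ldots,P_m)$ is identified with
\[
\{s \in V(\IQbar) : P_1,\ldots,P_m \in X_s(\IQbar)\},
\]
which contains $[X_0]$ (hence is non-empty) and is contained in the finite set $\{s \in V(\IQbar) : P_1,\ldots,P_{m_0} \in X_s(\IQbar)\}$ from Lemma~\ref{LemmaChowVar0Dim} (hence is finite). This settles the claim.

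There is no genuine obstacle here; the only subtle point is the initial bookkeeping identifying the fiber of $\iota_A^{[m]}$ with an incidence set inside $V$. Once that is done, the lemma is essentially a reformulation of Lemma~\ref{LemmaChowVar0Dim}, with the one extra remark that the auxiliary points $P_{m_0+1},\ldots,P_m$ must be chosen on a common member of the family $V$ to keep the fiber non-empty.
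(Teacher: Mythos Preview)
Your proposal is correct and follows essentially the same approach as the paper: identify the fiber of $\iota_A^{[m]}$ over $\underline{P}$ with the incidence locus $\{s \in V(\IQbar) : P_1,\ldots,P_m \in X_s(\IQbar)\}$, and then invoke Lemma~\ref{LemmaChowVar0Dim}. The only cosmetic difference is that the paper extends the tuple for $m>m_0$ by repeating $P_{m_0}$, so the incidence locus for $m$ points coincides exactly with the one for $m_0$ points and Lemma~\ref{LemmaChowVar0Dim} applies verbatim without inspecting its proof; your choice of arbitrary extra points in $X_0(\IQbar)$ works equally well but costs you that extra glance inside the proof of Lemma~\ref{LemmaChowVar0Dim} to secure non-emptiness.
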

\begin{proof}
Let  $P_1,\ldots,P_{m_0} \in A(\IQbar)$ be from Lemma~\ref{LemmaChowVar0Dim}. For each $k \geq m_0$, we set $P_k = P_{m_0}$.

Set $\underline{P} = (P_1,\ldots,P_m) \in A^m(\IQbar)$. To prove the lemma, it suffices to prove that $(\iota_A^{[m]}|_{\mathscr{X}_{r,d}(A)^{[m]}\times_{\mathbf{H}_{r,d}(A)}V})^{-1}(\underline{P})$ is a non-empty finite set.

It is clear that $\pi_A^{[m]}|_{\{\underline{P}\} \times V}$ is an isomorphism. Therefore, $\pi_A^{[m]}$ induces an isomorphism
\[
(\{\underline{P}\} \times V) \cap \mathscr{X}_{r,d}(A)^{[m]} \cong \pi_A^{[m]}\left((\{\underline{P}\} \times V) \cap \mathscr{X}_{r,d}(A)^{[m]} \right).
\]
The left hand side of this isomorphism is precisely $(\iota_A^{[m]}|_{\mathscr{X}_{r,d}(A)^{[m]}\times_{\mathbf{H}_{r,d}(A)}V})^{-1}(\underline{P})$. So it suffices to prove that the right hand side of this isomorphism is a non-empty finite set. A direct computation shows that the right hand side is
\[
\{[X] \in V(\IQbar) : P_1 \in X(\IQbar),  \ldots, P_{m_0} \in X(\IQbar) \},
\]
which is non-empty and finite by  Lemma~\ref{LemmaChowVar0Dim}.
\end{proof}

\subsection{Parameterizing subvarieties in families}\label{SubsectionHilbFamily}
We next extend the setting of the previous subsection to the case of families of abelian varieties.
Let $\pi^{\mathrm{univ}} \colon \mathfrak{A}_g \rightarrow \mathbb{A}_g$ be the universal abelian variety over the fine moduli space of principally polarized abelian varieties of dimension $g$ with level-$4$-structure. For each $b \in \mathbb{A}_g(\IQbar)$, the abelian variety parametrized by $b$ is $(\mathfrak{A}_g)_b = (\pi^{\mathrm{univ}})^{-1}(b)$.

We fix a symmetric relatively ample line bundle $\mathfrak{L}_g$ on $\mathfrak{A}_g / \mathbb{A}_g$ satisfying the following property: for each principally polarized abelian variety $(A,L)$ parametrized by $b \in \mathbb{A}_g(\IQbar)$, we have $c_1(\mathfrak{L}_g|_{(\mathfrak{A}_g)_b}) = 2c_1(L)$; see \cite[Prop.~6.10]{MFK:GIT94} for the existence of $\mathfrak{L}_g$. 
The line bundle $\mathfrak{L}_g^{\otimes 4}$ is relatively very ample on $\mathfrak{A}_g / \mathbb{A}_g$.

Again by \cite[Thm.~2.1(b) and Lem.~2.4]{FGAHilb}, there are finitely many possibilities for the Hilbert polynomials of irreducible subvarieties of $(\mathfrak{A}_g)_b$ (for all $b \in \mathbb{A}_g(\IQbar)$) of dimension $r$ and degree $d$ with respect to $\mathfrak{L}_g^{\otimes 4}|_{(\mathfrak{A}_g)_b}$.\footnote{In the notation of \cite[Thm.~2.1(b) and Lem.~2.4]{FGAHilb}, $X = \mathfrak{A}_g$, $S= \mathbb{A}_g$, $\mathcal O_X(1) = \mathfrak{L}_g^{\otimes 4}$, $K = \IQbar$, $Y$ the irreducible subvarieties of degree $d$, $\mathfrak F$ the structure sheaf of such a subvariety $Y$, and $E$ the set of the classes of all these structure sheaves.} Let $\Xi$ be this finite set of polynomials.

Consider the $\mathbb{A}_g$-scheme
\begin{equation}\label{EqHilbSch}
\mathbf{H} := \bigsqcup_{P \in \Xi} \mathbf{H}_P(\mathfrak{A}_g / \mathbb{A}_g) \xrightarrow{\iota_{\mathbf{H}}} \mathbb{A}_g
\end{equation}
with $\mathbf{H}_P(\mathfrak{A}_g / \mathbb{A}_g)$ the $\mathbb{A}_g$-scheme representing the functor from $\mathbb{A}_g$-schemes to sets associating with an $\mathbb{A}_g$-scheme $T$ the set of proper subschemes $W \subseteq \mathfrak{A}_g\times_{\mathbb{A}_g}T$ that are flat over $T$ and have Hilbert polynomial $P$. Each $\mathbf{H}_P(\mathfrak{A}_g / \mathbb{A}_g)$ is projective 
over $\mathbb{A}_g$. Since $\Xi$ is a finite set, the structure morphism $\iota_{\mathbf{H}}$ is of finite type.

Similarly to the previous subsection, there is a universal family endowed with a closed $\mathbf{H}$-immersion 
\begin{equation}\label{EqUnivFamilyOverHilbRel}
\xymatrix{
\mathscr{X} := \mathscr{X}_{r,d}(\mathfrak{A}_g / \mathbb{A}_g)  \ \ar@{^(->}[r] \ar[rd] & \mathfrak{A}_g \times_{\mathbb{A}_g} \mathbf{H} =: \cA 
\ar[d]^-{\pi} \\
& \mathbf{H}
}
\end{equation}
with $\pi|_{\mathscr{X}}$ flat and proper.  

\medskip

To ease notation, we set $\mathfrak{S}_S := \mathfrak{S} \times_{\mathbf{H}} S$ for every $\mathbf{H}$-scheme $\mathfrak{S} \rightarrow \mathbf{H}$ and every morphism $S \rightarrow \mathbf{H}$. In particular, we do so for $\mathscr{X}$ and $\cA$ as well as for their fibered self-products over $\mathbf{H}$. If $S$ is a variety, then $\cA_S \rightarrow S$ is an abelian scheme, and $\mathscr{X}_S$ is a subvariety of $\cA_S$ which dominates $S$.

\medskip

As for \eqref{EqHilbOpenLocus}, the set
\begin{equation}\label{EqHilbSchemeOpenLocusFamily}
\{s \in \mathbf{H}: \mathscr{X}_{r,d}(\mathfrak{A}_g / \mathbb{A}_g)_s\text{ is geometrically integral}\}
\end{equation}
is Zariski open in $\mathbf{H}$ and hence has an induced structure as an open subscheme. We write $\mathbf{H}_{r,d}(\mathfrak{A}_g / \mathbb{A}_g)^{\circ}$ for the maximal reduced subscheme of this subscheme, which is a quasi-projective  variety defined over $\IQbar$. In the sequel, $\mathbf{H}_{r,d}(\mathfrak{A}_g / \mathbb{A}_g)^{\circ}$ is called the \textit{restricted Hilbert scheme}. In addition, we denote it simply by $\mathbf{H}^{\circ}$ if $r$, $d$ and $g$ are clear from context.

By our choice of $\Xi$ and the construction \eqref{EqHilbSch}, $\mathbf{H}^{\circ}(\IQbar)$ parametrizes all pairs $((A,L),X)$ consisting of a principally polarized abelian variety defined over $\IQbar$ and an irreducible subvariety $X$ of $A$ defined over $\IQbar$ with $\dim X = r \ge 1$ and $\deg_{\mathfrak{L}_g^{\otimes 4}|_A} X = d$.\footnote{Here $(A,L)$ gives rise to a point $b \in \mathbb{A}_g(\IQbar)$, and we identify $A$ with $(\mathfrak{A}_g)_b$.} 
For each irreducible component $S$ of $\mathbf{H}$, the intersection $S \cap \mathbf{H}^{\circ}$ is either a dense Zariski open set in $S$ or empty. For convenience, we set $\iota_{\mathbf{H}^{\circ}} := \iota_{\mathbf{H}}|_{\mathbf{H}^{\circ}}$ for the restriction of the structural morphism. The fiber $\iota_{\mathbf{H}^{\circ}}^{-1}(b)$ is precisely the subvariety $\mathbf{H}_{r,d}(A_b)^{\circ}$ defined in the previous subsection.

\medskip

For each $m \ge 1$, let $\mathscr{X}_{\mathbf{H}^{\circ}}^{[m]}$ (resp. $\cA_{\mathbf{H}^{\circ}}^{[m]}$) denote the $m$-fold fibered power of $\mathscr{X}_{\mathbf{H}^{\circ}}$ over $\mathbf{H}^{\circ}$ (resp.\ of $\cA_{\mathbf{H}^{\circ}}$ over $\mathbf{H}^{\circ}$).  
There is a commutative diagram 
\begin{equation}\label{EqHilbFiberPowerToUse}
\xymatrix{
\mathscr{X}^{[m]}_{\mathbf{H}^{\circ}} \ \ar@{^(->}[r] \ar[rd] & 
\cA^{[m]}_{\mathbf{H}^{\circ}} \ar[d]^-{\pi^{[m]}} \ar[r]^-{\iota^{[m]}} \pullbackcorner & \mathfrak{A}_g^{[m]} \ar[d]^-{\pi^{\mathrm{univ}, [m]}}  \\
& \mathbf{H}^{\circ} \ar[r]^-{\iota_{\mathbf{H}^{\circ}}} & \mathbb{A}_g.
}
\end{equation}
Taking fibers in this diagram over a point $b \in \mathbb{A}_g(\IQbar)$, we obtain again the diagram \eqref{EqUnivFamilyPower} for the abelian variety $A = (\mathfrak{A}_g)_b$. 

\begin{lemma}\label{LemmaGenericFiniteHilb}
Let $S$ be an irreducible subvariety of $\mathbf{H}^{\circ}$. Then there exists $m_0  = m_0(S) \ge 1$ such that the restriction of $\iota^{[m]}$ to $\mathscr{X}_{S}^{[m]}$ is generically finite onto its image for every $m \ge m_0$.
\end{lemma}
\begin{proof}
Choose some $b \in \iota_{\mathbf{H}^{\circ}}(S)(\IQbar)$. We use the notation from \eqref{EqUnivFamilyPower} with $A = A_b$. For any $\underline{P} \in (\pi^{\mathrm{univ}, [m]})^{-1}(b)(\IQbar) = A_b^m(\IQbar)$, we have
\begin{equation}\label{EqInverseImageFamilyFiber}
(\iota^{[m]})^{-1}(\underline{P}) \cap \mathscr{X}_S^{[m]} = (\iota_{A_b}^{[m]}|_{\mathscr{X}_{r,d}(A_b)^{[m]} \times_{\mathbf{H}_{r,d}(A_b)} V})^{-1}(\underline{P}),
\end{equation}
where $V = (\iota_{\mathbf{H}^{\circ}}|_{S})^{-1}(b) \subseteq \iota_{\mathbf{H}^{\circ}}^{-1}(b) = \mathbf{H}_{r,d}(A_b)^{\circ}$. 
Thus there exists $\underline{P}$ such that $(\iota^{[m]}|_{\mathscr{X}^{[m]}_S})^{-1}(\underline{P})$ has dimension $0$ by Lemma~\ref{LemmaGenericallyFiniteFiber}. 

The asserted generic finiteness follows if  ${\mathscr{X}^{[m]}_S}$ is irreducible. To prove this irreducibility, we use that $\mathscr{X}_{\mathbf{H}^\circ} \rightarrow \mathbf{H}^\circ$ is flat by construction, and so are the morphisms $\mathscr{X}_{\mathbf{H}^\circ}^{[m]} \rightarrow \mathbf{H}^\circ$ and ${\mathscr{X}^{[m]}_S} \rightarrow S$. Openness of flat morphisms \cite[Thm.~2.4.6]{EGAIV} implies that every irreducible component of ${\mathscr{X}^{[m]}_S}$ dominates $S$, \textit{i.e.}\ its projection contains the generic point of $S$. But since $\mathscr{X}_s$ is geometrically irreducible for all $s \in \mathbf{H}^{\circ}(\IQbar)$, there can be only one such irreducible component of ${\mathscr{X}^{[m]}_S}$ by \cite[Prop.~9.7.8]{EGAIV}.
\end{proof}

\subsection{Non-degeneracy}
We keep the notation from the previous subsection, in particular those in \eqref{EqUnivFamilyOverHilbRel}. For later applications of both the height inequality of  \cite[Thm.~1.6 and B.1]{DGHUnifML} and the equidistribution result \cite[Thm.~1]{Kuehne:21}, we need to work with appropriate non-degenerate subvarieties in certain families of abelian varieties. Here, we give the main proposition providing such non-degenerate subvarieties, inspired by a result of the second-named author \cite[Prop.~3.4]{GeQuadraticPoints} and using a theorem of the first-named author \cite[Thm.~10.1]{GaoBettiRank}.

\begin{prop}\label{PropNonDeg}
Let $S \subseteq \mathbf{H}^{\circ}$ be an irreducible subvariety and consider $\mathscr{X}_S \subseteq \cA_S \rightarrow S$. Writing $\bar{\eta}$ for the geometric generic point of $S$, assume that the following hypotheses hold true:
\begin{enumerate}
\item[(i)] $\mathscr{X}_{\bar{\eta}}$ is an irreducible subvariety of $\cA_{\bar{\eta}}$;
\item[(ii)] $\mathscr{X}_s$ generates $\cA_s$ for each $s \in S(\mathbb{C})$;
\item[(iii)] the subvariety $\mathscr{X}_{\bar{\eta}}$ has finite stabilizer.
\end{enumerate}
Then there exists $m_0 = m_0(S) \ge 1$ such that $\mathscr{X}^{[m]}_S$ is a non-degenerate subvariety of $\cA_S^{[m]}$ for each $m \ge m_0$.
\end{prop}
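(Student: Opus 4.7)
The plan is to deduce this from the geometric criterion for non-degeneracy proved in \cite[Thm.10.1]{GaoBettiRank} of the first-named author, feeding it with the generic finiteness provided by Lemma~\ref{LemmaGenericFiniteHilb}. The structural role of each hypothesis (i)--(iii) is then as follows: (i) ensures that $\mathscr{X}_S$, and hence all of its fibered powers $\mathscr{X}^{[m]}_S$ over $S$, are irreducible (the argument from the last paragraph of the proof of Lemma~\ref{LemmaGenericFiniteHilb}, via \cite[Prop.9.7.8]{EGAIV}, propagates irreducibility to the fibered powers); (ii) and (iii) are exactly the ``no proper subgroup'' and ``finite stabilizer'' assumptions needed on the geometric generic fiber to invoke the criterion of \cite[Thm.10.1]{GaoBettiRank}.

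First, I would apply Lemma~\ref{LemmaGenericFiniteHilb} to $S \subseteq \mathbf{H}^{\circ}$ to obtain $m_0 \ge 1$ such that for every $m \ge m_0$ the composition
\[
\mathscr{X}^{[m]}_S \hookrightarrow \cA^{[m]}_{\mathbf{H}^{\circ}} \xrightarrow{\iota^{[m]}} \mathfrak{A}_g^{[m]}
\]
from diagram~\eqref{EqHilbFiberPowerToUse} is generically finite. This is the key geometric input: it says that the natural modular map from the fibered power $\mathscr{X}^{[m]}_S$ into the universal $m$-fold fibered power $\mathfrak{A}_g^{[m]}$ over $\mathbb{A}_g$ (which itself parametrizes pure Hodge-theoretic data) has fibers of the expected dimension $0$.

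Next, I would translate hypotheses (ii) and (iii) into the form required by \cite[Thm.10.1]{GaoBettiRank}. Hypothesis~(ii) is a geometric non-degeneracy statement on $\mathscr{X}_{\bar\eta}$ inside $\cA_{\bar\eta}$ which rules out the trivial obstruction to $\mathscr{X}^{[m]}_S$ being non-degenerate coming from $\mathscr{X}_S$ being contained in a strict abelian subscheme of $\cA_S$. Hypothesis~(iii), that the stabilizer $\mathrm{Stab}(\mathscr{X}_{\bar\eta})$ is finite, is what prevents $\mathscr{X}^{[m]}_S$ from degenerating along a positive-dimensional translation direction, and is precisely the condition that, combined with the generic finiteness of $\iota^{[m]}|_{\mathscr{X}^{[m]}_S}$, allows one to conclude non-degeneracy via the criterion.

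Having verified the hypotheses, one directly invokes \cite[Thm.10.1]{GaoBettiRank} applied to $\cA^{[m]}_{S} \to S$ together with the modular map $S \xrightarrow{\iota_{\mathbf{H}^{\circ}}|_S} \mathbb{A}_g$ to conclude that $\mathscr{X}^{[m]}_S$ is a non-degenerate subvariety of $\cA^{[m]}_S$ for every $m \ge m_0$. The main obstacle I anticipate is checking that the setup of \cite[Thm.10.1]{GaoBettiRank} really matches what we have here: because $S$ is an arbitrary irreducible subvariety of the restricted Hilbert scheme $\mathbf{H}^{\circ}$ rather than (an open subset of) a Shimura variety, the application is more delicate than in the case of curves treated by \cite[Thm.1.2']{GaoBettiRank} in \cite{DGHUnifML, Kuehne:21}. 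This is precisely the reason why the second-named author's Hilbert-scheme construction in \cite[$\mathsection$3]{GeQuadraticPoints} was adapted, and why the more general \cite[Thm.10.1]{GaoBettiRank} is invoked rather than \cite[Thm.1.2']{GaoBettiRank}.
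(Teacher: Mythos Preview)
Your proposal is correct and follows essentially the same approach as the paper's proof, which invokes the criterion \cite[Thm.10.1]{GaoBettiRank} (in the reformulation \cite[Thm.6.5.(i)]{GaoSurveyUML}) combined with the generic finiteness from Lemma~\ref{LemmaGenericFiniteHilb}. The only detail you omit is that the criterion also requires $m \ge \dim S$, so the final $m_0$ must be taken as $\max\{m_0, \dim S\}$ where $m_0$ is the output of Lemma~\ref{LemmaGenericFiniteHilb}.
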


Non-degenerate subvarieties over $\mathbb{C}$ are defined as in 
\cite[Defn.~1.5 and B.4]{DGHUnifML}. 
A subvariety defined over $\IQbar$ is said to be non-degenerate if its base change to $\mathbb{C}$ is non-degenerate.

\begin{proof}[Proof of Proposition \ref{PropNonDeg}]
We wish to invoke \cite[Thm.~10.1.(i)]{GaoBettiRank} with $t=0$. 
In fact, as the conventions of \cite{GaoBettiRank} deviate a litte bit from standard terminology, we apply it in the formulation of \cite[Thm.~6.5.(i)]{GaoSurveyUML}.

All hypotheses of \cite[Thm.~6.5]{GaoSurveyUML} are satisfied: indeed, hypothesis (a) clearly holds true because we fixed $r \ge 1$ at the beginning of this section, hypothesis (b) holds true by (ii), and hypothesis (c) holds true by (iii). Thus we can apply \cite[Thm.~6.5.(i)]{GaoSurveyUML} to the abelian scheme $\cA_S \rightarrow S$ and the subvariety $\mathscr{X}_S$. It yields that $\mathscr{X}^{[m]}_S$ is non-degenerate if $m \ge \dim S$ and $\iota^{[m]}|_{\mathscr{X}^{[m]}_S}$ is generically finite. By Lemma~\ref{LemmaGenericFiniteHilb} there exists $m_0 \ge 1$ such that $\iota^{[m]}|_{\mathscr{X}^{[m]}_S}$ is generically finite for each $m \ge m_0$. So the conclusion holds true with $m_0$ replaced by $\max\{m_0, \dim S\}$.
\end{proof}

We conclude with a further lemma useful in later sections.

\begin{lemma}\label{LemmaConstructibleSet}
For a (not necessarily irreducible) subvariety $S \subseteq \mathbf{H}^{\circ}$, we define the subset 
\small
\begin{align}
\label{EqConstructibleSet}
S_{\mathrm{gen}}:=\{ s\in S(\IQbar): \text{the} & \text{ stabilizer of } \mathscr{X}_s \text{ in }\cA_s \text{ is finite, and } \mathscr{X}_s\text{ generates }\cA_s\}. 
\end{align}
\normalsize
Endow each irreducible component $S'$ of $\overline{S_{\mathrm{gen}}}$ (the Zariski closure of $S_{\mathrm{gen}}$ in $S$) with the reduced induced subscheme structure. Then $\mathscr{X}_{S'} \subseteq \cA_{S'} \rightarrow S'$ satisfies the hypotheses (i)--(iii) of Proposition~\ref{PropNonDeg}.
\end{lemma}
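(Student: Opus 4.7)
The plan is to verify hypotheses (i), (ii), (iii) of Proposition~\ref{PropNonDeg} at the geometric generic point $\bar\eta$ of $S'$, using throughout that $S_{\mathrm{gen}} \cap S'$ is Zariski dense in $S'$ by construction of the irreducible components of $\overline{S_{\mathrm{gen}}}$.

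For (i), every closed fiber $\mathscr{X}_s$ with $s \in S'(\IQbar) \subseteq \mathbf{H}^{\circ}(\IQbar)$ is integral by the definition of $\mathbf{H}^{\circ}$ in \eqref{EqHilbSchemeOpenLocusFamily}. Since $\mathscr{X}_{S'} \to S'$ is flat and proper, \cite[Prop.9.7.8]{EGAIV} yields that $\mathscr{X}_{\bar\eta}$ is geometrically irreducible, exactly as at the end of the proof of Lemma~\ref{LemmaGenericFiniteHilb}. For (iii), the relative stabilizer $\mathrm{Stab}(\mathscr{X}_S/S) \subseteq \cA_S$ is representable as a closed $S$-subgroup scheme whose fiber over $s$ is the stabilizer of $\mathscr{X}_s$ in $\cA_s$; by upper semi-continuity of fiber dimension the subset $\{s \in S : \dim \mathrm{Stab}(\mathscr{X}_s) = 0\}$ is Zariski open in $S$, contains $S_{\mathrm{gen}}$ by definition, hence meets $S'$ in a non-empty open subset and in particular contains its generic point.

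The main content is (ii), which I would prove by contradiction. Suppose $\mathscr{X}_{\bar\eta}$ is contained in a proper algebraic subgroup $G_{\bar\eta} \subsetneq \cA_{\bar\eta}$. After passing to a finite étale cover of $S'$, I may assume $G_{\bar\eta}$ is defined over the function field of $S'$; let $G \subseteq \cA_{S'}$ be its Zariski closure. By generic flatness, after restricting to a Zariski open dense $U \subseteq S'$ the morphism $G_U \to U$ is flat, so all fibers $(G_U)_s$ have dimension $\dim G_{\bar\eta} < g$ and are thus proper closed subschemes of $\cA_s$. The group-scheme axioms for $G_U$---that the addition map $G_U \times_U G_U \to \cA_U$ and the inverse morphism both factor through $G_U$, and that the identity section of $\cA_U$ factors through $G_U$---hold on the generic fiber since $G_{\bar\eta}$ is a subgroup of $\cA_{\bar\eta}$. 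By Chevalley's theorem on the constructibility of images, each of these factorizations extends to a Zariski open dense subset of $S'$, so after further shrinking $U$, $G_U$ is an $U$-subgroup scheme of $\cA_U$ containing $\mathscr{X}_U$. Picking $s \in U(\IQbar) \cap S_{\mathrm{gen}}$, which is non-empty by density, yields $\mathscr{X}_s \subseteq (G_U)_s \subsetneq \cA_s$, contradicting the defining property of $S_{\mathrm{gen}}$.

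The hard part is the spreading-out step in (ii): propagating the subgroup structure on $G_{\bar\eta}$ to an actual family of proper subgroup schemes $G_U \subseteq \cA_U$ over a Zariski open dense $U \subseteq S'$ with constant fiber dimension $< g$. This combines generic flatness, constancy of fiber dimension under flatness, and the spreading out of the group-scheme structure via Chevalley's theorem. Hypotheses (i) and (iii) are by comparison routine consequences of \cite[Prop.9.7.8]{EGAIV} and upper semi-continuity of fiber dimension.
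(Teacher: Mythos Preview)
Your argument is correct, and your treatment of (i) matches the paper's. For (ii) and (iii), however, you take a genuinely different route from the paper.

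For (iii), the paper does not invoke a relative stabilizer scheme; instead it takes the neutral component $C$ of $\mathrm{Stab}_{\cA_{\bar\eta}}(\mathscr{X}_{\bar\eta})$, which is an abelian subvariety, spreads it out to an abelian subscheme $\cC$ of $\cA_{S''}$ after a quasi-finite dominant base change $S'' \to S'$, and then specializes at a point $s'' \in S''$ lying over $S_{\mathrm{gen}}$ to force $C = 0$. Your approach via representability of the relative stabilizer and upper semi-continuity of fiber dimension is cleaner and avoids the base change, at the cost of invoking the representability statement as a black box.

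For (ii), the paper exploits the structure of proper subgroups of abelian varieties: since $\mathscr{X}_{\bar\eta}$ is irreducible, containment in a proper subgroup forces $\mathscr{X}_{\bar\eta} \subseteq B + P$ with $B$ a proper abelian subvariety and $P$ a torsion point. The paper then spreads out $B$ to an abelian subscheme and $P$ to a torsion section after a quasi-finite base change, and specializes. Your approach spreads out the subgroup $G_{\bar\eta}$ directly by taking its Zariski closure and propagating the group-scheme axioms via constructibility. This is more uniform but requires more care: you should make explicit that after passing to the cover (call it $S''$) the pullback $\mathscr{X}_{S''}$ is still irreducible and hence equals the closure of $\mathscr{X}_{\bar\eta}$, so that $\mathscr{X}_{\bar\eta} \subseteq G_{\bar\eta}$ indeed gives $\mathscr{X}_{S''} \subseteq G$; and your ``$s \in U(\IQbar) \cap S_{\mathrm{gen}}$'' should really be a point of $S''$ lying over $S_{\mathrm{gen}} \cap S'$. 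These are notational rather than mathematical gaps. The paper's approach buys concreteness (abelian subschemes and torsion sections spread out on the nose), while yours buys generality and avoids the structure theorem for subgroups.
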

\begin{proof} 
Let $\bar{\eta}$ be the geometric generic point of $S'$.

To verify the condition (i), we can use again \cite[Prop.~9.7.8]{EGAIV} as in the proof of Lemma \ref{LemmaGenericFiniteHilb} above. As there, we simply use the fact that $\mathscr{X}_{s'}$ is (geometrically) irreducible for all $s' \in \mathbf{H}^{\circ}(\IQbar)$.

The condition (ii) clearly holds true, for example by Chevalley's Theorem on the upper semicontinuity of the fiber dimension \cite[Thm.~13.1.3]{EGAIV}.

To prove (iii), let $C$ be the identity component of $\mathrm{Stab}_{\cA_{\bar{\eta}}}(\mathscr{X}_{\bar{\eta}})$. There exists a quasi-finite dominant morphism $\rho \colon S'' \rightarrow S'$ such that $C$ extends to an abelian subscheme $\cC$ of $\cA_{S''} = \cA_{S'} \times_{S'} S'' \rightarrow S''$. Clearly, $\cC_{s''} \subseteq \mathrm{Stab}_{\cA_{s''}}(\mathscr{X}_{s''})$ for each $s'' \in S(\IQbar)$. Since $\rho(S'')$ contains a Zariski open dense subset of $S'$, there exists a point $s' \in \rho(S'')(\IQbar) \cap S_{\mathrm{gen}}$ as in the previous paragraph. 
Choosing a point $s'' \in S''(\IQbar)$ over $s' \in S'(\IQbar)$, the fiber $\cA_{s''}$ is naturally identified with $\cA_{s'}$, which also induces an identification of $\mathscr{X}_{s''}$ with $\mathscr{X}_{s'}$. 
It follows that $\cC_{s''} \subseteq \mathrm{Stab}_{\cA_{s''}}(\mathscr{X}_{s''}) = \mathrm{Stab}_{\cA_{s'}}(\mathscr{X}_{s'})$. By the definition of $S_{\mathrm{gen}}$ \eqref{EqConstructibleSet}, $\mathrm{Stab}_{\cA_{s'}}(\mathscr{X}_{s'})$ is finite and hence $\cC_{s''}$ is the identity element of $\cA_{s''}$. Thus $\cC$ is the zero section of $\cA_{S''} \rightarrow S''$, and $C = \cC_{\bar{\eta}}$ is the origin of $\cA_{\bar{\eta}}$, whence (iii). 
\end{proof}

\section{Applying the height inequality}\label{SectionAppHtIneq}

\begin{prop}\label{PropHtIneqStep1}
Let $g$, $l$, $r\le g$ and $d$ be positive integers. There exist constants $c_1' = c_1'(g,l,r,d) > 0$, $c_2' = c_2'(g,l,r,d) > 0$ and $c_3' = c_3'(g,l,r,d)>0$ satisfying the following property: For 
\begin{itemize}
\item every polarized abelian variety $(A,L)$ over $\IQbar$ of dimension $g$ and degree $\deg_L A = l$, and
\item every irreducible subvariety $X$ of $A$ with $\dim X = r$, $\deg_L X = d$ and such that $X$ generates $A$, 
\end{itemize}
the set
\begin{equation}
\left\{ x \in X^{\circ}(\IQbar) : \hat{h}_{L\otimes [-1]^*L}(x) \le c_1'\max\{1, h_{\mathrm{Fal}}(A)\} - c_3' \right\}, 
\end{equation}
is contained in $X'(\IQbar)$ for a proper Zariski closed $X' \subsetneq X$ with $\deg_L(X') < c_2'$.
\end{prop}

For the proof of the proposition, we start by providing a similar result (Proposition~\ref{PropHtIneqStep1Aux}) for the universal families over the \textit{restricted} Hilbert schemes constructed in $\mathsection$\ref{SubsectionHilbFamily}. Next, we deduce Proposition~\ref{PropHtIneqStep1} in the case where $(A,L)$ is principally polarized. Finally, we handle the polarization type and prove the full Proposition~\ref{PropHtIneqStep1}.

\subsection{A preliminary result for restricted Hilbert schemes}\label{SubsectionHtIneqFamilies}
We use the notation from $\mathsection$\ref{SubsectionHilbFamily}. In particular, we have a symmetric relatively ample line bundle $\mathfrak{L}_g$ on the universal abelian variety $\mathfrak{A}_g \rightarrow \mathbb{A}_g$ 
with level-$4$-structure 
satisfying the following property: for each principally polarized abelian variety $(A,L)$ parametrized by $b \in \mathbb{A}_g(\IQbar)$, we have $c_1(\mathfrak{L}_g|_{(\mathfrak{A}_g)_b}) = 2c_1(L)$. 
If we identify $A = (\mathfrak{A}_g)_b$, then $c_1(\mathfrak{L}_g|_A) = c_1(L \otimes [-1]^*L)$.

For each $1\le r \le g$ and each $d\ge 1$, we have the  \textit{restricted} Hilbert scheme $\mathbf{H}^\circ:=\mathbf{H}_{r,d}(\mathfrak{A}_g / \mathbb{A}_g)^{\circ}$  from 
 \eqref{EqHilbSchemeOpenLocusFamily}. Recall that it is a variety defined over $\IQbar$ whose closed points parametrizes all pairs $(X, (A,L))$ of a principally polarized abelian variety $(A,L)$ and an irreducible subvariety $X$ of $A$ with $\dim X = r$ and $\deg_{\mathfrak{L}_g^{\otimes 4}|_A}X = d$.

For $\cA_{\mathbf{H}^{\circ}} := \mathfrak{A}_g \times_{\mathbb{A}_g} \mathbf{H}^{\circ}$, we have a commutative diagram (\eqref{EqHilbFiberPowerToUse} with $m=1$)
\begin{equation}\label{EqSubsectionHtIneqFamiliesHilbDiagram}
\xymatrix{
\mathscr{X}_{\mathbf{H}^{\circ}} \ar@{^(->}[r] \ar[rd] & 
\cA_{\mathbf{H}^{\circ}} \ar[d]^-{\pi} \ar[r]^-{\iota} \pullbackcorner & \mathfrak{A}_g \ar[d]^-{\pi^{\mathrm{univ}}}  \\
& \mathbf{H}^{\circ} \ar[r]^-{\iota_{\mathbf{H}^{\circ}}} & \mathbb{A}_g
}
\end{equation}
where $\pi|_{\mathscr{X}_{\mathbf{H}^{\circ}}} \colon \mathscr{X}_{\mathbf{H}^{\circ}} \rightarrow \mathbf{H}^{\circ}$ is the universal family. Every object in the diagram \eqref{EqSubsectionHtIneqFamiliesHilbDiagram} is a \textit{variety} over $\IQbar$, and every morphism is defined over $\IQbar$.

Set $\cL := \iota^*\mathfrak{L}_g^{\otimes 4}$. Notice that $\cL$ is relatively very ample with respect to $\cA_{\mathbf{H}^{\circ}} \rightarrow \mathbf{H}^{\circ}$.

Fix an ample line bundle $\mathfrak{M}$ on a compactification $\overline{\mathbb{A}_g}$ of $\mathbb{A}_g$. 
The morphism $\iota_{\mathbf{H}^{\circ}}$ extends to a morphism $\overline{\iota_{\mathbf{H}^{\circ}}} \colon \overline{\mathbf{H}^{\circ}} \rightarrow \overline{\mathbb{A}_g}$ for some compactification $\overline{\mathbf{H}^{\circ}}$ of $\mathbf{H}^{\circ}$. We can do these constructions such that $\overline{\mathbf{H}^{\circ}}$ is a projective variety defined over $\IQbar$ which contains $\mathbf{H}^{\circ}$ as a Zariski open dense subset. Set $\cM := \overline{\iota_{\mathbf{H}^{\circ}}}^*\mathfrak{M}$.

\begin{prop}\label{PropHtIneqStep1Aux}
Let $S \subseteq \mathbf{H}^{\circ}$ be a  (not necessarily irreducible) subvariety defined over $\IQbar$. Let $S_{\mathrm{gen}} \subseteq S(\IQbar)$ be the subset defined by \eqref{EqConstructibleSet}.

There exist constants $c_1' = c_1'(r,d,S)>0$, $c_2' = c_2'(r,d,S) >0$ and $c_3'= c_3'(r,d,S) >0$ with the following property. 
For each $s \in S_{\mathrm{gen}}$, the set
\begin{equation}\label{EqNGPHtIneq}
\Sigma_s := \left\{ x \in \mathscr{X}_s^{\circ} : \hat{h}_{\cL}(x) \le c_1'\max\{1, h_{\overline{\mathbf{H}^{\circ}}, \cM}(s)\} - c_3' \right\}
\end{equation}
is contained in $X'(\IQbar)$ for a proper Zariski closed $X' \subsetneq \mathscr{X}_s$ with $\deg_{\cL_s}(X') < c_2'$.
\end{prop}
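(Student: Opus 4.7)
The plan is to work in the family \eqref{EqSubsectionHtIneqFamiliesHilbDiagram} over $S$, produce a non-degenerate subvariety by way of Proposition~\ref{PropNonDeg}, apply the Dimitrov--Gao--Habegger height inequality \cite[Thm.1.6 and Thm.B.1]{DGHUnifML} on an appropriately large fibered power, and finally descend from the fibered power back to individual fibers by a combinatorial Alon--type argument (Lemma~\ref{LemmaNogaAlon}). The constant $c_2'$ controlling $\deg_{\cL_s}(X')$ will arise from this last step, which is also the source of the main difficulty.

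First I would reduce to the case where $S$ is irreducible and $S_{\mathrm{gen}}$ is Zariski dense in $S$. This is achieved by working one irreducible component of $\overline{S_{\mathrm{gen}}}$ at a time: on each such component $S'$ the intersection $S'(\IQbar)\cap S_{\mathrm{gen}}$ is automatically Zariski dense, by the same argument that appears in the proof of Lemma~\ref{LemmaConstructibleSet}. That lemma then yields the three hypotheses of Proposition~\ref{PropNonDeg} on the geometric generic fiber of $\mathscr{X}_{S'}\subseteq\cA_{S'}\to S'$, and Proposition~\ref{PropNonDeg} produces an integer $m_0$ such that $\mathscr{X}_{S'}^{[m]}$ is non-degenerate in $\cA_{S'}^{[m]}$ for every $m\ge m_0$. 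Fix such an $m$; its exact value will be pinned down by the combinatorial step below.

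The next step is to apply \cite[Thm.1.6 and Thm.B.1]{DGHUnifML} to the non-degenerate subvariety $\mathscr{X}_{S'}^{[m]}$, using the line bundle $\cL^{\boxtimes m}$ on $\cA_{S'}^{[m]}$ and $\cM$ on the base. This furnishes positive constants $c_1,c_3$ and a proper Zariski closed subset $Z\subsetneq\mathscr{X}_{S'}^{[m]}$ such that
\[
\sum_{i=1}^{m}\hat h_{\cL}(x_i)\;=\;\hat h_{\cL^{\boxtimes m}}(x_1,\ldots,x_m)\;\ge\;c_1\max\bigl\{1,h_{\overline{\mathbf{H}^{\circ}},\cM}(\pi^{[m]}(x_1,\ldots,x_m))\bigr\}-c_3
\]
for every $(x_1,\ldots,x_m)\in(\mathscr{X}_{S'}^{[m]}\setminus Z)(\IQbar)$. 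For any $s\in S_{\mathrm{gen}}\cap S'(\IQbar)$ and any tuple $(x_1,\ldots,x_m)\in\Sigma_s^m$, summing the defining inequalities of $\Sigma_s$ gives $\sum\hat h_{\cL}(x_i)\le m\bigl(c_1'\max\{1,h_{\overline{\mathbf{H}^{\circ}},\cM}(s)\}-c_3'\bigr)$. Setting $c_1':=c_1/(2m)$ and taking $c_3'$ sufficiently large in terms of $c_1,c_3,m$ forces a contradiction unless $(x_1,\ldots,x_m)\in Z_s(\IQbar)$. Hence $\Sigma_s^m\subseteq Z_s$ as subsets of $(\mathscr{X}_s)^m$.

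The final step is combinatorial: Lemma~\ref{LemmaNogaAlon}, the natural generalization of \cite[Lem.6.3]{DGHUnifML} from curves to arbitrary subvarieties $\mathscr{X}_s$, converts the inclusion $\Sigma_s^m\subseteq Z_s$ inside the irreducible variety $(\mathscr{X}_s)^m$ into a proper Zariski closed $X'\subsetneq\mathscr{X}_s$ with $\Sigma_s\subseteq X'(\IQbar)$ and $\deg_{\cL_s}(X')\le c_2'$, where $c_2'$ depends only on a uniform bound for $\deg Z_s$ together with $m$ and $r$. The hardest part will be the joint requirement that $Z_s$ be a \emph{proper} subvariety of $(\mathscr{X}_s)^m$ \emph{and} carry a uniformly bounded degree: generic flatness of $Z\to S'$ supplies both properties on a Zariski open dense subset of $S'$, while on the complementary proper Zariski closed $T\subsetneq S'$ one falls back on noetherian induction on $\dim S$, applied to $T$ with $T\cap S_{\mathrm{gen}}$ in place of $S_{\mathrm{gen}}$. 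The base case $\dim S=0$ is trivial because the constants are then allowed to depend on the finitely many points of $S$.
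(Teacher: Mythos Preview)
Your proposal is correct and follows essentially the same route as the paper: reduce to irreducible $S$, induct on $\dim S$, use Lemma~\ref{LemmaConstructibleSet} and Proposition~\ref{PropNonDeg} on each irreducible component $S'$ of $\overline{S_{\mathrm{gen}}}$ to obtain a non-degenerate $\mathscr{X}_{S'}^{[m]}$, apply the Dimitrov--Gao--Habegger height inequality to force $\Sigma_s^m$ into the fiber $Z_s$ of the exceptional closed set, and then invoke Lemma~\ref{LemmaNogaAlon} to descend to a proper $X'\subsetneq\mathscr{X}_s$ of bounded degree.

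The one place where your phrasing differs slightly from the paper is the treatment of the ``bad locus'': you appeal to generic flatness of $Z\to S'$ to obtain both the properness of $Z_s$ and the uniform degree bound on an open set, with noetherian induction on the complement. The paper instead works directly with the open set $\mathscr{X}_{S'}^{[m],*}:=\mathscr{X}_{S'}^{[m]}\setminus Z$ furnished by the height inequality and takes the base locus to be $S'\setminus\pi^{[m]}(\mathscr{X}_{S'}^{[m],*})$; for $s$ in the image one has $Z_s\subsetneq(\mathscr{X}_s)^m$ automatically, and the degree bound on $Z_s$ is asserted as depending only on $S'$ and $m$. These are equivalent in substance. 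Note that generic flatness alone does not literally give properness of $Z_s$---you also need that $\mathscr{X}_{S'}^{[m]}$ is irreducible (which follows from irreducibility of the geometric generic fiber) so that the nonempty open complement of $Z$ dominates $S'$; but you clearly have this in mind.
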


This proposition is a generalization of the corresponding result for the universal curve \cite[Prop.~7.1]{DGHUnifML}, though our formulation here is closer to \cite[Prop.~7.2]{GaoSurveyUML}. 
Notice  that the case $S=\mathbf{H}^\circ$ contains the full strength of the proposition. Furthermore, it is the only case we use below in the proof of Proposition \ref{PropHtIneqPPAV}. However, the above formulation is more convenient for the proof by induction.

\begin{proof}[Proof of Proposition \ref{PropHtIneqStep1Aux}]
It suffices to prove the proposition for $S$ irreducible, which we assume from now on.

For the proof, we use an induction on $\dim S$. The base case $\dim S = 0$ is straightforward as we may choose $c_3^\prime$ large enough to render $\Sigma_s$ empty.

Endow each irreducible component $S'$ of $\overline{S_{\mathrm{gen}}}$ with the reduced induced subscheme structure.  
Lemma~\ref{LemmaConstructibleSet} allows us to invoke Proposition~\ref{PropNonDeg} for $\mathscr{X}_{S'} \subseteq \cA_{S'} \rightarrow S'$. 
Let $m_0(S') > 0$ be as in Proposition~\ref{PropNonDeg}, and let $m = \max_{S'}\{m_0(S')\}$ with $S'$ running over all irreducible components of $S_{\mathrm{gen}}$. Then $\mathscr{X}^{[m]}_{S'}$ is a non-degenerate subvariety of $\cA_{S'}^{[m]}$. Note that $m = m(S) > 0$ depends only on $S$. 

Let $\mathscr{X}^{[m],*}_{S'}$ be $\mathscr{X}^{[m]}_{S'}$ deprived of its $0$-th degeneracy locus; a definition of the $0$-th degeneracy can be found in \cite[Defn.~1.6]{GaoBettiRank}. By \cite[Thm.~1.8]{GaoBettiRank}, it is a Zariski open subset of $\mathscr{X}^{[m]}_{S'}$. It is defined over $\IQbar$ since $\mathscr{X}^{[m]}_{S'}$ is and it is dense in $\mathscr{X}^{[m]}_{S'}$ because of the non-degeneracy of $\mathscr{X}^{[m]}_{S'}$.

Consider the abelian scheme $\pi^{[m]} \colon \cA_{\mathbf{H}^{\circ}}^{[m]} \rightarrow \mathbf{H}^{\circ}$.

Consider $S' \setminus \pi^{[m]}(\mathscr{X}_{S'}^{[m],*})$ endowed with the reduced induced subscheme structure; it has dimension $\le \dim S' - 1 \le \dim S - 1$. Let $S_1,\ldots,S_k$ be the irreducible components of $\bigcup_{S'} S' \setminus \pi^{[m]}(\mathscr{X}_{S'}^{[m],*})$ with $S'$ running over all irreducible components of $\overline{S_{\mathrm{gen}}}$. The set $\{S_1,\ldots,S_k\}$ is uniquely determined by $S$ and $m$. As $m = m(S) > 0$ depends only on $S$, it is actually already determined by $S$.

Let $s \in S_{\mathrm{gen}}(\IQbar)$. Then either $s \in \bigcup_{i=1}^k S_i(\IQbar)$ or $s \in \pi^{[m]}(\mathscr{X}_{S'}^{[m],*})(\IQbar)$ for some irreducible component $S'$ of $\overline{S_{\mathrm{gen}}}$. For the remaining proof of the proposition, we distinguish between these two cases.

\smallskip
\noindent\underline{\textbf{Case (i):} $s \in \bigcup_{i=1}^k S_i(\IQbar)$.}
\smallskip

For each $i \in \{1,\ldots,k\}$, we can apply the induction hypothesis to $S_i$ as $\dim S_i \le \dim S - 1$. In this way, we obtain constants $c_{i,1}' = c_{i,1}'(r,d,S_i) > 0$, $c_{i,2}' = c_{i,2}'(r,d,S_i) > 0$, and $c_{i,3}' = c_{i,3}'(r,d,S_i) >0$ such that for each $s \in S_i(\IQbar)$, the set
\begin{equation}
\Sigma_{i,s}:=\left\{ x \in \mathscr{X}_s^{\circ}(\IQbar) : \hat{h}_{\cL}(x) \le c_{i,1}'\max\{1, h_{\overline{\mathbf{H}^{\circ}}, \cM}(s)\} - c_{i,3}' \right\}
\end{equation}
is contained in $X'(\IQbar)$ for some proper Zariski closed $X' \subsetneq \mathscr{X}_s$ with $\deg_{\cL_s}(X') < c_{i,2}'$.

Let $c_{\mathrm{deg},1}' = \min_{1\le i\le k} \{c_{i,1}'\} > 0$, $c_{\mathrm{deg},2}' = \max_{1\le i\le k} \{c_{i,2}'\}> 0$ and $c_{\mathrm{deg},3}' = \max_{1\le i\le k} \{c_{i,3}'\}>0$. 
As the set $\{S_1,\ldots,S_k\}$ is uniquely determined by $S$, the constants $c_{\mathrm{deg},1}'$, $c_{\mathrm{deg},2}'$ and $c_{\mathrm{deg},3}'$ depend only on $g$, $d$, $r$ and $S$. Moreover, for each $s \in \bigcup_{i=1}^k S_i(\IQbar)$, the set
\begin{equation}\label{EqDegHtIneqSet}
\Sigma_{\mathrm{deg},s} := \left\{ x \in \mathscr{X}_s^{\circ}(\IQbar) : \hat{h}_{\cL}(x) \le c_{\mathrm{deg},1}'\max\{1, h_{\overline{\mathbf{H}^{\circ}}, \cM}(s)\} - c_{\mathrm{deg},3}' \right\}
\end{equation}
must be contained in $\Sigma_{i,s}$ for some $i$. So $\Sigma_{\mathrm{deg},s}$ is contained in $X'(\IQbar)$ for some proper Zariski closed $X' \subsetneq \mathscr{X}_s$ with $\deg_{\cL_s}(X') \le c_{\mathrm{deg},2}'$. This concludes the induction step in Case (i). 

\smallskip
\noindent\underline{\textbf{Case (ii):}
$s \in \pi^{[m]}(\mathscr{X}_{S'}^{[m],*})(\IQbar)$ for some irreducible component $S'$ of $\overline{S_{\mathrm{gen}}}$.}
\smallskip

By the height inequality \cite[Thm.~1.6]{DGHUnifML} (in the version of \cite[Thm.~7.1]{GaoSurveyUML}), there exist constants $c = c(S') > 0$ and $c' = c'(S')$ such that 
\begin{equation}\label{EqHtInequalityFamily}
\hat{h}_{\cL}(x_1) + \cdots + \hat{h}_{\cL}(x_m) \ge c h_{\overline{\mathbf{H}^{\circ}},\cM}(s) - c'
\end{equation}
for all  $(x_1,\ldots,x_m) \in (\mathscr{X}_{S'}^{[m],*})_s(\IQbar)$. 
We set $c_{S',1}' = c/m$ and $c_{S',3}' = (c+c')/m$; these constants depend only on $S'$ and $m$. Consider
\begin{equation}\label{EqNonDegHtIneqSet}
\Sigma_{S',s} = \left\{ x \in \mathscr{X}_s^{\circ}(\IQbar) : \hat{h}_{\cL}(x) \le c_{S',1}'\max\{1, h_{\overline{\mathbf{H}^{\circ}}, \cM}(s)\} - c_{S',3}' \right\} \subseteq \mathscr{X}_s(\IQbar). 
\end{equation}
If $\Sigma_{S',s}^m \cap \mathscr{X}_{S'}^{[m],*}(\IQbar)$ were non-empty, then there would exist  some $$(x_1,\ldots,x_m) \in \Sigma_{S',s} ^m \cap \mathscr{X}_{S'}^{[m],*}(\IQbar),$$ in contradiction of the height inequality \eqref{EqHtInequalityFamily}. Hence $\Sigma_{S',s} ^m \subseteq (\mathscr{X}_{S'}^{[m]} \setminus \mathscr{X}_{S'}^{[m],*})_s(\IQbar)$.
Our case assumption implies $(\mathscr{X}_{S'}^{[m],*})_s \not= \emptyset$ and thus $$\mathscr{X}_s^m = (\mathscr{X}_{S'}^{[m]})_s \not= (\mathscr{X}_{S'}^{[m]} \setminus \mathscr{X}_{S'}^{[m],*})_s.$$
We may hence apply Lemma~\ref{LemmaNogaAlon} below to $X = \mathscr{X}_s$, $L=\cL_s|_{\mathscr{X}_s}$,  $Z = (\mathscr{X}_{S'}^{[m]} \setminus \mathscr{X}_{S'}^{[m],*})_s$ and $\Sigma = \Sigma_{S',s}$. It yields a Zariski closed subset $X'$ of $\mathscr{X}_s$ such that
\begin{enumerate}
\item[(i)] $X' \subsetneq \mathscr{X}_s$,
\item[(ii)] $\deg_{\cL_s} (X') < c(m, r, d, \deg_{\cL_s^{\boxtimes m}}(\mathscr{X}_{S'}^{[m]} \setminus \mathscr{X}_{S'}^{[m],*})_s)$, and
\item[(iii)] $\Sigma_{S',s} \subseteq X'(\IQbar)$.
\end{enumerate}

The degree $\deg_{\cL_s^{\boxtimes m}}(\mathscr{X}_{S'}^{[m]} \setminus \mathscr{X}_{S'}^{[m],*})_s$ is bounded from above solely in terms of $S'$ and $m$. Hence (ii) implies a bound of the form
$\deg_{\cL_s} (X') < c_{S',2}'$ with $c_{S',2}'$ depending only on $m$, $r$, $d$ and $S'$. In summary, the set $\Sigma_{S',s}$ defined in \eqref{EqNonDegHtIneqSet} is contained in $X'(\IQbar)$ for a proper Zariski closed $X' \subsetneq \mathscr{X}_s$ with $\deg_{\cL_s}(X') < c_{S',2}'$.

Letting $S'$ runs over all irreducible components of $\overline{S_{\mathrm{gen}}}$, we set $c_1^{*\prime} := \min_{S'} \{c_{S',1}'\} > 0$, $c_2^{*\prime} := \max_{S'} \{c_{S',2}'\}> 0$ and $c_3^{*\prime} := \max_{S'} \{c_{S',3}'\}>0$. 
As the Zariski closed subset $\overline{S_{\mathrm{gen}}}$ of $S$ is determined by $S$, so are its irreducible components $S^\prime$. Moreover, $m = m(S) > 0$ depends only on $S$. Therefore, the constants $c_1^{*\prime}$, $c_2^{*\prime}$ and $c_3^{*\prime}$ can be expressed only in terms of $r$, $d$ and $S$. 

Let us summarize the result of the above discussion: For each $s \in \bigcup_{S'} \pi^{[m]}(\mathscr{X}_{S'}^{[m],*})(\IQbar)$, the set
\begin{equation}\label{EqNonDegHtIneqUnionSet}
\Sigma_s^* := \left\{ x \in \mathscr{X}_s^{\circ}(\IQbar) : \hat{h}_{\cL}(x) \le c_1^{*\prime}\max\{1, h_{\overline{\mathbf{H}^{\circ}}, \cM}(s)\} - c_3^{*\prime} \right\}
\end{equation}
must be contained in $\Sigma_{S',s}$ for some $S'$; thus $\Sigma_s^*$ is contained in $X'(\IQbar)$ for some proper Zariski closed $X' \subsetneq \mathscr{X}_s$ with $\deg_{\cL_s}(X') < c_2^{*\prime}$. This concludes the proof in Case (ii).
\end{proof}

\begin{lemma}\label{LemmaNogaAlon}
    Let $k$ be a field, $X$ an irreducible projective variety over $k$ and $L$ a very ample line bundle on $X$. Consider a closed subvariety $Z\subsetneq X^M$. Then there exists a constant
	\[c=c(M,\dim X, \deg_L X, \deg_{L^{\boxtimes M}}(Z))>0\] 
	such that, for any subset $\Sigma\subseteq X(k)$ satisfying $\Sigma^M\subseteq Z(k)$, there exists a proper Zariski closed $X' \subsetneq X$ with $\Sigma \subseteq X'(k)$ and $\deg_L(X') < c$.
\end{lemma}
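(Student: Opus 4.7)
I will prove the lemma by induction on $M$, generalizing the projection-and-fiber strategy used for curves in \cite[Lem.6.3]{DGHUnifML}. For the base case $M=1$, we have $\Sigma\subseteq Z\subsetneq X$, so $X':=Z_{\mathrm{red}}$ is a proper closed subvariety of $X$ containing $\Sigma$, and $\deg_L(X')\le \deg_L(Z)$, which suffices.

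For the inductive step $M\ge 2$, let $\pi\colon X^M\to X^{M-1}$ be the projection that forgets the first coordinate and set $Z':=\overline{\pi(Z)}$. If $Z'\subsetneq X^{M-1}$, then, since $\Sigma$ is nonempty, $\Sigma^{M-1}=\pi(\Sigma^M)\subseteq Z'$, and the inductive hypothesis applied to $Z'\subsetneq X^{M-1}$ yields the desired $X'$; the degree of $Z'$ is bounded in terms of $\deg_{L^{\boxtimes M}}(Z)$ by standard projection estimates. Otherwise $\pi|_Z$ is dominant, and I introduce the \emph{bad locus}
\[
\tilde B:=\{y\in X^{M-1}: X\times\{y\}\subseteq Z\},
\]
which is a proper closed subvariety of $X^{M-1}$ (else $Z=X^M$, a contradiction). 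If $\Sigma^{M-1}\not\subseteq \tilde B$, pick $(x_2,\ldots,x_M)\in\Sigma^{M-1}\setminus\tilde B$, and observe that the fiber $F:=\{x_1\in X:(x_1,x_2,\ldots,x_M)\in Z\}$ is a proper closed subvariety of $X$ containing $\Sigma$; its degree is bounded in terms of $\deg_{L^{\boxtimes M}}(Z)$, $\deg_L X$, $\dim X$, and $M$ by cutting $Z$ with $M-1$ fiber conditions via Bezout. Otherwise $\Sigma^{M-1}\subseteq \tilde B$, and the inductive hypothesis applied to $\tilde B\subsetneq X^{M-1}$ produces the required $X'$.

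The main technical obstacle is to bound $\deg_{L^{\boxtimes M-1}}(\tilde B)$ in terms of $\deg_{L^{\boxtimes M}}(Z)$ (together with $\deg_L X$, $\dim X$, and $M$), so that the induction closes. This is achieved by a component-wise analysis. Decompose $Z=\bigcup_j W_j$ and $\tilde B=\bigcup_\ell \tilde B_\ell$ into irreducible components; each $\tilde B_\ell$ is contained in some $W_j$ satisfying $X\times\tilde B_\ell\subseteq W_j$, whence $\dim W_j\ge \dim X+\dim \tilde B_\ell$. In the extremal case $\dim W_j=\dim X+\dim \tilde B_\ell$ one has $W_j=X\times \tilde B_\ell$ and
\[
\deg_{L^{\boxtimes M}}(W_j)=\binom{\dim W_j}{\dim X}\deg_L(X)\,\deg_{L^{\boxtimes M-1}}(\tilde B_\ell),
\]
so $\deg_{L^{\boxtimes M-1}}(\tilde B_\ell)\le \deg_{L^{\boxtimes M}}(Z)/\deg_L(X)$; in the non-extremal case one invokes upper-semicontinuity of fiber dimension together with a Chow-form / resultant computation for the locus in $X^{M-1}$ where the fibers of $\pi|_{W_j}$ have maximal dimension, yielding the same qualitative estimate. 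Iterating these estimates through the induction produces a single constant $c=c(M,\dim X,\deg_L X,\deg_{L^{\boxtimes M}}(Z))$ with the required property.
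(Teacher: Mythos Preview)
Your inductive strategy is sound and close in spirit to the paper's, but the step where you bound $\deg_{L^{\boxtimes M-1}}(\tilde B)$ is a genuine gap. In the ``non-extremal'' situation (where $X\times\tilde B_\ell\subsetneq W_j$), the set $\tilde B_\ell$ sits inside the locus in $X^{M-1}$ over which the fibres of $\pi|_{W_j}$ jump to maximal dimension; controlling the degree of such a jump locus in terms of $\deg W_j$ alone is not a B\'ezout-type statement, and the appeal to a ``Chow-form / resultant computation'' is not a proof. Moreover, several $\tilde B_\ell$ may share the same $W_j$, so even in principle you cannot just sum the extremal contributions.

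The fix is easy and renders this whole detour unnecessary: you never need $\deg\tilde B$. Observe that for \emph{any} $x_0\in X(k)$ one has $\tilde B\subseteq Z_{x_0}:=\{y\in X^{M-1}:(x_0,y)\in Z\}$, and since $Z\subsetneq X^M$ there exists $x_0$ with $Z_{x_0}\subsetneq X^{M-1}$. Then $\Sigma^{M-1}\subseteq \tilde B\subseteq Z_{x_0}$, and $\deg_{L^{\boxtimes M-1}}(Z_{x_0})\le \deg_{L^{\boxtimes M}}(Z)\cdot\deg_{L^{\boxtimes M-1}}(X^{M-1})$ by a straightforward B\'ezout computation (intersect $Z$ with $\{x_0\}\times X^{M-1}$). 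Apply the induction hypothesis to $Z_{x_0}$. This also absorbs your first case ($\overline{\pi(Z)}\subsetneq X^{M-1}$), so you no longer need the projection degree estimate either.

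This is, up to the direction of projection, exactly how the paper argues: it projects to the \emph{first} factor $q\colon X^M\to X$ rather than to $X^{M-1}$, splits $Z=Z'\cup Z''$ according to which components dominate $X$, and in the case analogous to your $\Sigma^{M-1}\subseteq\tilde B$ it slices $Z'$ by $\{P\}\times X^{M-1}$ for a suitable $P\in\Sigma$ and applies induction to that slice. The paper's extra decomposition $Z=Z'\cup Z''$ lets it close the complementary case \emph{without} induction (taking $X'=X''\cup q(Z'')$ for a single well-chosen slice $X''$), which is slightly cleaner than your route but not essentially different.
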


\begin{proof}
	We prove this lemma by induction on $M$. The base case $M=1$ is trivial as one may take $X':=Z$. 
	
	Assume next that the lemma is proved for $1, \ldots, M-1 \ge 1$. Let $q \colon X^M \rightarrow X$ be the projection to the first factor. We write $Z'$ for the union of all irreducible components $Y$ of $Z$ such that $q(Y)=X$ and $Z''$ for the union of the other components. 
Setting $\Sigma'' := q\left(\Sigma^M \cap Z''(k)\right)$, we notice that $(\Sigma \setminus \Sigma'') \times \Sigma^{M-1} \subseteq Z'(k)$.	
	By \cite[Thm.~13.1.3 and Prop.~13.2.3]{EGAIV}, the fiber $q|_{Z'}^{-1}(P)$ over a  generic point $P \in X$ has dimension $$\dim Z' - \dim X < M \cdot \dim X - \dim X = (M-1) \dim X,$$ and the set
	\[W := \{P \in X : \{P\} \times X^{M-1} \subseteq Z' \} \subsetneq X\] 
    is Zariski closed in $X$.
    
    \smallskip
    \noindent\underline{\textbf{Case (i): $\Sigma \setminus \Sigma'' \not\subseteq W(k)$}.} 
    \smallskip
    
    Take a point $P \in \Sigma\setminus \Sigma''$ such that $P \notin W(k)$ and set $Z_1:= Z' \cap (\{P\}\times X^{M-1})$. 
    Since $\{P\} \times \Sigma^{M-1} \subseteq Z'(k)$, we can apply the induction hypothesis for $M-1$, $\Sigma \subseteq X(k)$ and $Z_1 \subseteq X^{M-1}$, tacitly identifying $\{P\}\times X^{M-1}$ with $X^{M-1}$. It implies the existence of a closed subvariety $X' \subsetneq X$ such that $\Sigma \subseteq X'(k)$ and
	\begin{equation}\label{DegreeBoundCase1}
		\deg_L(X') < c(M-1, \dim X, \deg_L X, \deg_{L^{\boxtimes {(M-1)}}}(Z_1)).
	\end{equation}
    We claim that the constant on the right hand side can be expressed solely in terms of $M$, $\dim X$, $\deg_L(X)$ and $\deg_{L^{\boxtimes M}} (Z)$; this evidently suffices to complete the proof in this case. Indeed, we have
    $$\deg_{L^{\boxtimes (M-1)}}(Z_1) = \deg_{L^{\boxtimes M}}(Z' \cap (\{P\}\times X^{M-1}))$$  and 
	\[\deg_{L^{\boxtimes M}}(Z' \cap (\{P\}\times X^{M-1})) \le \deg_{L^{\boxtimes M}}(Z') \deg_{L^{\boxtimes (M-1)}}(X^{M-1})\]
	by B\'{e}zout's theorem. Using \eqref{EqDegreeProduct} inductively by taking $Y:=X$ and $Y^\prime:=X^i$ for $i=1,2,...,M-2$, we have 
	\[
	\deg_{L^{\boxtimes (M-1)}}(X^{M-1})=\frac{((M-1)\dim X)!}{((\dim X )!)^{M-1}} (\deg_L X)^{M-1}.
	\]Moreover $\deg_{L^{\boxtimes M}} (Z') \le \deg_{L^{\boxtimes M}} (Z)$. In conclusion, we get
	\[
	\deg_{L^{\boxtimes (M-1)}}(Z_1) \le \frac{((M-1)\dim X)!}{((\dim X )!)^{M-1}} \deg_{L^{\boxtimes M}} (Z) (\deg_L X)^{M-1}.
	\] 
    Hence the right hand side of \eqref{DegreeBoundCase1} depends only on $M$, $\dim X$, $\deg_L X$ and $\deg_{L^{\boxtimes M}} (Z)$. This proves the claim and ends the proof.
    
    \smallskip
    \noindent\underline{\textbf{Case (ii): }$\Sigma \setminus \Sigma'' \subseteq W(k)$.}
    \smallskip
    
    In this case, $(\Sigma \setminus \Sigma'') \times X^{M-1} \subseteq Z'$. We also assume that $k$ is algebraically closed, which we may by enlarging the base field if needed.  
    As $Z' \subsetneq X^M$, there exists a point $\underline{x} \in X^{M-1}(k)$ such that $(X \times \{\underline{x}\}) \cap Z' \not= X \times \{\underline{x}\}$. Thus $(X \times \{\underline{x}\}) \cap Z' = X'' \times \{\underline{x}\}$ for some closed subset $X'' \subsetneq X$. We have $\Sigma \setminus \Sigma'' \subseteq X''(k)$ by construction, and 
	\[\deg_L X'' \le \deg_{L^{\boxtimes M}}(X\times \{\underline{x}\}) \deg_{L^{\boxtimes M}}(Z) = \deg_L X \deg_{L^{\boxtimes M}}(Z)\]
	by B\'{e}zout's theorem.

    To control the points in $\Sigma''$, note that the closed subset $q(Z'')$ is properly contained in $X$ by definition. Moreover, we have
	\[
	\deg_L q(Z'')\le \deg_{L^{\boxtimes M}} (Z'')\le  \deg_{L^{\boxtimes M}} (Z);
	\]
    the first inequality follows again from \cite[Lem.~2.4]{DillTorsionIsogenousAV}.
    In summary, it suffices to take $X' := X'' \cup q(Z'') \subsetneq X$ in this case.
\end{proof}

\subsection{The case of principally polarized abelian varieties}

We next prove Proposition~\ref{PropHtIneqStep1} in the special case of principally polarized abelian varieties.

\begin{prop}\label{PropHtIneqPPAV}
Let $g$, $r$ and $d$ be positive integers. There exist constants $c_1' = c_1'(g,r,d) > 0$, $c_2' = c_2'(g,r,d) > 0$ and $c_3' = c_3'(g,r,d) >0$ satisfying the following property: For 
\begin{itemize}
\item every principally polarized abelian variety $(A,L)$ defined over $\IQbar$ of dimension $g$,
\item and every irreducible subvariety $X$ of $A$
with $\dim X = r$ and $\deg_L X = d$ generating $A$,
\end{itemize}
the set
\[
\Sigma:=\left\{ x \in X^{\circ}(\IQbar) : \hat{h}_{L\otimes [-1]^*L}(x) \le c_1'\max\{1, h_{\mathrm{Fal}}(A)\} - c_3' \right\}
\]
is contained in  
$X'(\IQbar)$ for some proper Zariski closed $X' \subsetneq X$ with $\deg_L(X') < c_2'$.
\end{prop}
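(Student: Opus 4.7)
The plan is to deduce Proposition~\ref{PropHtIneqPPAV} directly from the family statement Proposition~\ref{PropHtIneqStep1Aux}, by viewing every relevant pair $(X,(A,L))$ as a point in a \emph{single} restricted Hilbert scheme depending only on $g$, $r$, $d$. Given $(A,L)$ principally polarized of dimension $g$ and $X \subseteq A$ irreducible of dimension $r$ and $L$-degree $d$, after picking a level-$4$-structure we obtain $b \in \mathbb{A}_g(\IQbar)$ with $A = (\mathfrak{A}_g)_b$. Because $c_1(\mathfrak{L}_g|_A) = 2c_1(L)$ one has $c_1(\mathfrak{L}_g^{\otimes 4}|_A) = 8c_1(L)$, so $\deg_{\mathfrak{L}_g^{\otimes 4}|_A} X = 8^r d$. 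Thus $X$ defines a $\IQbar$-point $s$ of the restricted Hilbert scheme $\mathbf{H}^\circ := \mathbf{H}^\circ_{r, 8^r d}(\mathfrak{A}_g/\mathbb{A}_g)$, a quasi-projective variety that depends only on $g$, $r$, $d$.

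Next I would apply Proposition~\ref{PropHtIneqStep1Aux} with $S := \mathbf{H}^\circ$, obtaining constants $c_1^{*}, c_2^{*}, c_3^{*} > 0$ depending only on $r$, $8^r d$, and $S$---hence only on $g$, $r$, $d$. Before quoting its conclusion at $s$, one must verify $s \in S_{\mathrm{gen}}$ in the sense of \eqref{EqConstructibleSet}. The non-containment condition is exactly our hypothesis on $X$. As for the finite-stabilizer condition, if $\mathrm{Stab}_A(X)$ has positive-dimensional identity component $H$, then every point of $X$ lies in some translate $x+H \subseteq X$ with $\dim H > 0$, whence $X^\circ(\IQbar) = \emptyset$ and the conclusion of Proposition~\ref{PropHtIneqPPAV} holds trivially with $X' = \emptyset$. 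So we may assume $s \in S_{\mathrm{gen}}$, and Proposition~\ref{PropHtIneqStep1Aux} produces the desired closed subset $X' \subsetneq \mathscr{X}_s = X$ with $\deg_{\cL_s}(X') < c_2^{*}$ containing the corresponding set $\Sigma_s$.

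Finally I would carry out two height conversions. Since both $\mathfrak{L}_g|_A$ and $L \otimes L_{-}$ are symmetric ample with first Chern class $2c_1(L)$, Lemma~\ref{LemmaSymAmpleFirstChernTorsion} yields $\hat{h}_{\cL_s} = 4\hat{h}_{L\otimes L_{-}}$; the identity $c_1(\cL_s) = 8c_1(L)$ also gives $\deg_L Y \le \deg_{\cL_s} Y$ for each irreducible component $Y$ of $X'$, so $\deg_L X' \le c_2^{*}$. It remains to compare the height $h_{\overline{\mathbf{H}^\circ},\cM}(s) = h_{\overline{\mathbb{A}_g},\mathfrak{M}}(b) + O(1)$ on the moduli space with the Faltings height $h_{\mathrm{Fal}}(A)$. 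By Faltings' theorem, $h_{\mathrm{Fal}}$ is comparable to the naive height associated with the Hodge bundle on a suitable toroidal compactification of $\mathbb{A}_g$, and on projective varieties the heights attached to any two ample line bundles are linearly comparable. Together these yield constants $C, C' > 0$ depending only on $g$ with $h_{\mathrm{Fal}}(A) \le C h_{\overline{\mathbf{H}^\circ},\cM}(s) + C'$; this lets us choose $c_1' = c_1^{*}/(4C)$ and absorb the remaining additive errors into $c_3'$. The main technical point is this Faltings-type comparison with an \emph{arbitrary} projective compactification of $\mathbb{A}_g$; it is standard but is where the uniformity in $g$ of the constants is ultimately inherited, and where the choice of $\mathfrak{M}$ and $\overline{\mathbb{A}_g}$ in $\mathsection$\ref{SubsectionHtIneqFamilies} becomes essential.
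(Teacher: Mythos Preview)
Your proposal is correct and follows essentially the same route as the paper: place every $(X,(A,L))$ as a $\IQbar$-point of the single restricted Hilbert scheme $\mathbf{H}^\circ_{r,8^r d}(\mathfrak{A}_g/\mathbb{A}_g)$, dispose of the positive-dimensional stabilizer case by observing $X^\circ=\emptyset$, apply Proposition~\ref{PropHtIneqStep1Aux} with $S=\mathbf{H}^\circ$, and then convert heights via Lemma~\ref{LemmaSymAmpleFirstChernTorsion} and Faltings' comparison $h_{\mathrm{Fal}}(A)\le c(g)\,h_{\overline{\mathbb{A}_g},\mathfrak{M}}(b)+c'(g)$. The only cosmetic differences are that the paper uses the exact equality $h_{\overline{\mathbf{H}^\circ},\cM}(s)=h_{\overline{\mathbb{A}_g},\mathfrak{M}}(b)$ (since $\cM$ is a pullback) rather than your $O(1)$, and quotes the precise Faltings--Moret-Bailly inequality instead of appealing to generalities about ample line bundles.
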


\begin{proof}
For simplicity denote by  $L_{-} := [-1]^*L$.

Let $\mathfrak{A}_g \to \mathbb{A}_g$ and $\mathfrak{L}_g$ be as from the beginning of $\mathsection$\ref{SubsectionHtIneqFamilies}. Then each principally polarized abelian variety $(A,L)$  defined over $\IQbar$ gives rise to a point $b \in \mathbb{A}_g(\IQbar)$, such that $A \cong (\mathfrak{A}_g)_b$ and $c_1(\mathfrak{L}_g|_{(\mathfrak{A}_g)_b}) = 2c_1(L) = c_1(L\otimes L_{-})$. 
 In the rest of the proof, we identify $A$ with $(\mathfrak{A}_g)_b$.

Let $X$ be an irreducible subvariety of $A$ generating $A$ such that $d = \deg_L X$ and $r = \dim X$. We may assume $r \ge 1$ because otherwise the result is trivial. We have 
\begin{equation}\label{EqDegreeWRTLineBundleOverUnivFamily}
\deg_{\mathfrak{L}_g^{\otimes 4} |_A}X = (4c_1(\mathfrak{L}_g|_A))^r \cdot [X] = (8c_1(L))^r \cdot [X] = 8^r \deg_L X = 8^r d.
\end{equation}

Consider the restricted Hilbert scheme $\mathbf{H}^{\circ}:=\mathbf{H}^{\circ}_{r, 8^r d}(\mathfrak{A}_g/\mathbb{A}_g)$ from \eqref{EqHilbSchemeOpenLocusFamily} and 
recall the commutative diagram \eqref{EqSubsectionHtIneqFamiliesHilbDiagram}
\[
\xymatrix{
\mathscr{X}_{\mathbf{H}^{\circ}} \ar@{^(->}[r] \ar[rd] & 
\cA_{\mathbf{H}^{\circ}} \ar[d]^-{\pi} \ar[r]^-{\iota} \pullbackcorner & \mathfrak{A}_g \ar[d]^-{\pi^{\mathrm{univ}}}  \\
& \mathbf{H}^{\circ} \ar[r]^-{\iota_{\mathbf{H}^{\circ}}} & \mathbb{A}_g
}
\]
of varieties and morphisms defined over $\IQbar$. Note that $\pi|_{\mathscr{X}_{\mathbf{H}^{\circ}}} \colon \mathscr{X}_{\mathbf{H}^{\circ}} \rightarrow \mathbf{H}^{\circ}$ is the universal family. 

The pair $(X, (A,L))$ is parametrized by a point $s \in \mathbf{H}^{\circ}(\IQbar)$ such that $X = \mathscr{X}_s$, $A = \cA_s = (\mathfrak{A}_g)_b$ and $\iota_{\mathbf{H}^{\circ}}(s) = b$.

\medskip

For the line bundle $\cM = \overline{\iota_{\mathbf{H}^{\circ}}}^*\mathfrak{M}$ on $\overline{\mathbf{H}^{\circ}}$ defined above Proposition~\ref{PropHtIneqStep1Aux}, we have $$h_{\overline{\mathbf{H}^{\circ}}, \cM}(s) = h_{\overline{\mathbb{A}_g}, \mathfrak{M}}(\iota_{\mathbf{H}^{\circ}}(s)) = h_{\overline{\mathbb{A}_g}, \mathfrak{M}}(b).$$ By fundamental work of Faltings \cite[Thm.~1.1]{MoretBailly:Hauteurs}, 
$h_{\mathrm{Fal}}(A)$ is bounded from above in terms of
$h_{\overline{\mathbb{A}_g},\mathfrak{M}}(b) = h_{\overline{\mathbf{H}^{\circ}}, \cM}(s)$ and $g$ only. 
More precisely, 
\begin{align}\label{EqFalHtModuliHt}
\max\{1, h_{\mathrm{Fal}}(A))\} 
&\le c(g) h_{\overline{\mathbf{H}^{\circ}}, \cM}(s) + c'(g) \log ( h_{\overline{\mathbf{H}^{\circ}}, \cM}(s) + 2) \\
&\le (c(g)+c'(g))h_{\overline{\mathbf{H}^{\circ}}, \cM}(s) + 2c'(g) \nonumber
\end{align}
\normalsize
for some constants $c(g) > 0$ and $c'(g) > 0$.

For the line bundle $\cL = \iota^*\mathfrak{L}_g^{\otimes 4}$ on $\cA_{\mathbf{H}^{\circ}}$ defined above Proposition~\ref{PropHtIneqStep1Aux}, we have $\cL|_{\cA_s} = \mathfrak{L}_g^{\otimes 4}|_{(\mathfrak{A}_g)_{\iota(s)}} = \mathfrak{L}_g^{\otimes 4}|_{(\mathfrak{A}_g)_b}$, and hence $\cL|_A = \mathfrak{L}_g^{\otimes 4}|_A$ via the identification $A = \cA_s = (\mathfrak{A}_g)_b$ fixed above. Thus 
$c_1(\cL|_A) = 4c_1(L\otimes L_{-})$. Moreover, both $\cL|_A$ and $(L \otimes L_{-})^{\otimes 4}$ are symmetric and ample on $A$, so Lemma~\ref{LemmaSymAmpleFirstChernTorsion} implies
\begin{equation}\label{EqNTHeightChangetoL}
\hat{h}_{\cL|_A}(x) = 4\hat{h}_{L\otimes L_{-}}(x)\text{ for each } x \in A(\IQbar).
\end{equation}

\medskip

Let $\mathbf{H}^{\circ}_{\mathrm{gen}}$ be the subset of $\mathbf{H}^{\circ}(\IQbar)$ as defined by \eqref{EqConstructibleSet}. If the stabilizer $\mathrm{Stab}_A(X)$ of $X$ in $A$ has positive dimension, then $X^{\circ} = \emptyset$ by the definition of the Ueno locus and hence the proposition trivially holds true. Therefore, we may and do assume that $\mathrm{Stab}_A(X)$ is finite. Then the point $s \in \mathbf{H}^{\circ}(\IQbar)$ parameterizing $(X, (A,L))$ is in $\mathbf{H}_{\mathrm{gen}}^{\circ}$. 

Invoking Proposition~\ref{PropHtIneqStep1Aux} with $S = \mathbf{H}^{\circ}$, we obtain constants $c_1'$, $c_2'$ and $c_3'$ depending only on $r$, $8^r d$ and $\mathbf{H}^{\circ}$ (and hence ultimately only on $g$, $d$ and $r$) and a proper Zariski closed $X' \subsetneq X$ with $\deg_{\cL|_A}(X') < c_2'$ such that the set $$\Sigma_s := \left\{ x \in X^{\circ}(\IQbar) : \hat{h}_{\cL|_A}(x) \le c_1'\max\{1, h_{\overline{\mathbf{H}^{\circ}}, \cM}(s)\} - c_3' \right\}$$ 
is contained in $X^\prime(\IQbar)$.

Notice that $\deg_L(X') <\deg_{\cL|_A}(X')$ because of $8c_1(L) = c_1(\cL|_A)$ and hence $\deg_L(X') < c_2'$.

By \eqref{EqFalHtModuliHt} and \eqref{EqNTHeightChangetoL}, we have
\[
\left\{ x \in X^{\circ}(\IQbar) : \hat{h}_{L\otimes L_{-}}(x) \le \frac{c_1'}{4(c(g)+c'(g))} \max\{1, h_{\mathrm{Fal}}(A)\} - \frac{c_3' + 2c_1'}{4}\right\} \subseteq \Sigma_s
\]
We are done on replacing $c_1'$ with $c_1'/4(c(g)+c'(g))$ and $c_3'$ with $(c_3' + 2c_1')/4$.
\end{proof}

\subsection{Proof of Proposition~\ref{PropHtIneqStep1}}\label{SubsectionProofOfHtIneqApp}

Let $(A,L)$ be a polarized abelian variety with $\deg_L A = l$ and let $X$ be an irreducible subvariety generating $A$ and such that $\deg_L X = d$. 

By Lemma~\ref{LemmaAbIsogPPAV}.(iv), there exists a principally polarized abelian variety $(A_0,L_0)$ and an isogeny $u_0 \colon A \rightarrow A_0$ such that $L = u_0^*L_0$; moreover, $\deg(u_0) = l / g!$. A basic property of the Faltings height \cite[Prop.~1.4.1]{Raynaud:HI} yields the bound
\begin{equation}\label{EqFaltingsHeight}
|h_{\mathrm{Fal}}(A) - h_{\mathrm{Fal}}(A_0)| \le \frac{1}{2}\log\deg u_0 = \frac{1}{2} \log (l/g!).
\end{equation}

It is well-known that there exists an isogeny $u \colon A_0 \rightarrow A$ such that $u_0 \circ u = [\deg u_0]$ on $A_0$. So $u^*L= (u_0\circ u)^*L_0 = [l/g!]^*L_0$, and $\deg u = (\deg u_0)^{2g -1} = (l/g!)^{2g-1}$.

Let $X_0$ be an irreducible component of $u^{-1}(X)$. Then $u(X_0) = X$ as $u$ is étale and $X_0$ generates $A_0$. By the projection formula \cite[Prop.~2.5]{Fulton}, we have
\begin{multline}\label{EqDegreeChangeIsogeny}
d' 
:=\deg_{u^*L} X_0 = c_1(u^*L)^{\dim X}  \cdot [X_0] \\
= \deg(u|_{X_0}) \cdot (c_1(L)^{\dim X} \cdot [X])
\le \deg(u) \cdot \deg_L X
= d(l/g!)^{2g-1}.
\end{multline}
It is easy to infer from the definition of the Ueno locus that $X^{\circ} = u(X_0)^{\circ} \subseteq u(X_0^{\circ})$.

Let $c_{0,1}'(g,r,d') > 0$, $c_{0,2}'(g,r,d') > 0$ and $c_{0,3}'(g,r,d') > 0$ be the constants from Proposition~\ref{PropHtIneqPPAV} with $d$  replaced by $d'$. 
Set $$c_1' := \min_{1\le d' \le  d(l/g!)^{2g-1}}\{c_{0,1}'(g,r,d')\} > 0,$$ $$c_2' := \max_{1\le d' \le d(l/g!)^{2g-1}}\{c_{0,2}'(g,r,d')\} > 0,$$ and $$c_3':=\max_{1\le d' \le d(l/g!)^{2g-1}} \{c_{0,3}'(g,r,d')\} > 0.$$ Then $c_1'$, $c_2'$ and $c_3'$ depend only on $g$, $r$ and $d$. 
Proposition~\ref{PropHtIneqPPAV} yields the following assertion: the set $$\Sigma_0 := \left\{ x_0 \in X_0^{\circ}(\IQbar) : \hat{h}_{L_0 \otimes [-1]^*L_0}(x_0) \le c_1' \max\{1,h_{\mathrm{Fal}}(A_0) \} - c_3' \right\}$$ is contained in $X_0'(\IQbar)$ for a proper Zariski closed $X_0' \subsetneq X_0$ with $\deg_{L_0}(X'_0) \le c_2'$.

Let $$\Sigma := \left\{ x \in X^{\circ}(\IQbar) : (g!/l)^2 \hat{h}_{L\otimes [-1]^*L}(x) \le c_1' \max\{1, h_{\mathrm{Fal}}(A)\} - c_3' - (1/2)\log (l/g!) \right\}.$$ As $X^{\circ} \subseteq u(X_0^{\circ})$ and $u^*(L\otimes [-1]^*L) \cong (L_0\otimes [-1]^*L_0)^{\otimes (l/g!)^2}$, 
\eqref{EqFaltingsHeight} yields $\Sigma \subseteq u(\Sigma_0)$. 

Set $X' := u(X_0')$. Then $X' \subsetneq X$, $\Sigma \subseteq X'(\IQbar)$, and
\begin{equation*}
\deg_L X' = c_1(L)^{\dim X'} \cdot [X'] 
 \le c_1(u^*L)^{\dim X'} \cdot [X_0']  
 = \deg_{u^*L}X_0' 
\le c_2'.
\end{equation*}
Replacing $c_1'$ with $(l/g!)^2 c_1'$ and $c_3'$ with $(l/g!)^2 (c_3' + (1/2)\log (l/g!))$, we obtain the assertion of Proposition~\ref{PropHtIneqStep1}. \qed

\section{Applying equidistribution}\label{SectionEqDist}
\begin{prop}\label{PropEqDistStep1}
Let $g$, $l$, $r$ and $d$ be positive integers. There exist constants  $c_2'' = c_2''(g,l,r,d) > 0$ and $c_3'' = c_3''(g,l,r,d) > 0$ with the following property. For 
\begin{itemize}
\item every polarized abelian variety $(A,L)$ of dimension $g$ defined over $\IQbar$ with $\deg_L A = l$, and
\item every irreducible subvariety $X \subseteq A$ generating $A$ with $\dim X = r$ and $\deg_L X = d$,
\end{itemize}
the set
\begin{equation}
\Sigma:=\left\{ x \in X^{\circ}(\IQbar) : \hat{h}_{L\otimes [-1]^*L}(x) \le c_3'' \right\}
\end{equation}
is contained in $X'(\IQbar)$ for a proper Zariski closed $X' \subsetneq X$ with $\deg_L(X') < c_2''$.
\end{prop}
The main idea of the proof this proposition is similar to that of the proof of Proposition~\ref{PropHtIneqStep1}: We put the various 
pairs of polarized abelian varieties $(A,L)$ and subvarieties $X \subset A$ into finitely many families over the components of the \textit{restricted} Hilbert scheme constructed in \eqref{EqHilbSchemeOpenLocusFamily}. The technical core of this section is a related result for each such family (Proposition ~\ref{PropEqDistStep1Aux}). To achieve the uniformity as stated, we need to do however also a careful bookkeeping.

\subsection{A preliminary result for the restricted Hilbert scheme}\label{SubsectionEqDistFamilies}
We retain the notation from $\mathsection$\ref{SubsectionHtIneqFamilies}. In particular $\pi^{\mathrm{univ}} \colon \mathfrak{A}_g \rightarrow \mathbb{A}_g$ is the universal abelian variety over the fine moduli space of principally polarized abelian varieties with level-$4$-structure, and $\mathfrak{L}_g$ is a symmetric relatively ample line bundle $\mathfrak{L}_g$ on $\mathfrak{A}_g / \mathbb{A}_g$ satisfying the following property: for each principally polarized abelian variety $(A,L)$ parametrized by $b \in \mathbb{A}_g(\IQbar)$, we have $c_1(\mathfrak{L}_g|_{(\mathfrak{A}_g)_b}) = 2c_1(L)$. 

For each $1\le r \le g$ and each $d\ge 1$, let $\mathbf{H}^{\circ}:=\mathbf{H}_{r,d}(\mathfrak{A}_g / \mathbb{A}_g)^{\circ}$ be the restricted Hilbert scheme  from \eqref{EqHilbSchemeOpenLocusFamily}; recall that its $\IQbar$-points parametrize all pairs $(X, (A,L))$ of a principally polarized abelian variety $(A,L)$ and an irreducible subvariety $X \subseteq A$ with $\dim X = r$ and $\deg_{\mathfrak{L}_g^{\otimes 4}|_A}X = d$.

We have a commutative diagram over $\IQbar$
(\eqref{EqHilbFiberPowerToUse} with $m=1$)
\begin{equation}\label{EqSubsectionEqDistFamiliesHilbDiagram}
\xymatrix{
\mathscr{X}_{\mathbf{H}^{\circ}} \ar@{^(->}[r] \ar[rd] & 
\cA_{\mathbf{H}^{\circ}} \ar[d]^-{\pi} \ar[r]^-{\iota} \pullbackcorner & \mathfrak{A}_g \ar[d]^-{\pi^{\mathrm{univ}}}  \\
& \mathbf{H}^{\circ} \ar[r]^-{\iota_{\mathbf{H}^{\circ}}} & \mathbb{A}_g
}
\end{equation}
where $\pi|_{\mathscr{X}_{\mathbf{H}^{\circ}}} \colon \mathscr{X}_{\mathbf{H}^{\circ}} \rightarrow \mathbf{H}^{\circ}$ is the universal family. 
Set $\cL := \iota^*\mathfrak{L}_g^{\otimes 4}$.

\begin{prop}\label{PropEqDistStep1Aux}
Let $S \subseteq \mathbf{H}^{\circ}$ be a (not necessarily irreducible) subvariety and let $S_{\mathrm{gen}} \subseteq S(\IQbar)$ be the subset defined by \eqref{EqConstructibleSet}.

There exist constants $c_2'' = c_2''(r,d,S) >0$ and $c_3'= c_3''(r,d,S) >0$ such that the following property holds true. 
For each $s \in S_{\mathrm{gen}}(\IQbar)$, the set
\begin{equation}\label{EqNGPHtIneq2}
\Sigma_s := \left\{ x \in \mathscr{X}_s^{\circ}(\IQbar) : \hat{h}_{\cL}(x) \le c_3'' \right\}
\end{equation}
is contained in a proper Zariski closed subset $X' \subsetneq \mathscr{X}_s$ with $\deg_{\cL_s}(X') < c_2''$.
\end{prop}

This proposition is a generalization of \cite[Prop.~21]{Kuehne:21} on families of curves. 
For convenience of the reader, we divide the proof into five steps. 
The goal of the first four steps is to run a family version of the classical approach of Ullmo \cite{Ullmo} and Zhang \cite{ZhangEquidist} to the Bogomolov conjecture in order to obtain a \textit{generic} lower bound on heights that are closely related to $\hat{h}_{\cL}$, which is our height of interest. 
In the final step, we then deduce Proposition~\ref{PropEqDistStep1Aux} from this height bound. As in the proof of Proposition~\ref{PropHtIneqStep1Aux}, we use Lemma~\ref{LemmaNogaAlon} to deal with the \textit{non-generic} points for which we do not obtain a height bound; this gives rise to the exceptional set $X'$. 

This family version of the Ullmo--Zhang approach differs from the classical one mainly in two aspects. First, a version of equidistribution is used that applies to \textit{families} of abelian varieties; for our purpose, \cite[Thm.~1]{Kuehne:21} or the more general results \cite[Thm.~6.7]{YuanZhangEqui} and \cite{GauthierGood} are sufficient replacements of the classical equidistribution result of Szpiro--Ullmo--Zhang \cite{SUZ}. Second and in contrast to \cite{SUZ}, a new condition, namely non-degeneracy as defined in \cite[Defn.~B.4]{DGHUnifML}, has to be verified for the subvarieties under consideration. 
Notice that every subvariety is non-degenerate in the case of a single abelian variety. So this extra verification is a genuinely new aspect of working in families and  in practice this step is often very difficult.

\begin{proof}[Proof of Proposition~\ref{PropEqDistStep1Aux}]
Decomposing $S$ into its irreducible components, it suffices to prove the proposition for irreducible $S$. 

We prove the proposition by induction on $\dim S$. The base case $\dim S = 0$ is in fact contained in the first three steps of the following proof. In contrast to Proposition \ref{PropHtIneqStep1Aux} it is a highly non-trivial statement, namely the classical Bogomolov conjecture. One could alternatively cite \cite{Ullmo,ZhangEquidist} for the base case and hence we do not state its proof separately as an induction start here. The induction hypothesis is only utilized in Step~5 and is unnecessary in case $\dim S=0$.

\medskip
\noindent\underline{\textbf{Step~1:} Construction of non-degenerate subvarieties}
\smallskip

Endow each irreducible component $S'$ of $\overline{S_{\mathrm{gen}}}$ with the reduced induced subscheme structure.  Lemma~\ref{LemmaConstructibleSet} allows us to invoke Proposition~\ref{PropNonDeg} for $\mathscr{X}_{S'} \subseteq \cA_{S'} \rightarrow S'$.

Let $m_0(S') > 0$ be from Proposition~\ref{PropNonDeg}, and let $m = \max_{S'}\{m_0(S')\}$ with $S'$ running over all irreducible components of $\overline{S_{\mathrm{gen}}}$. Then $\mathscr{X}^{[m]}_{S'}$ is a non-degenerate subvariety of $\cA_{S'}^{[m]}$ by Proposition~\ref{PropNonDeg}. Notice that $m > 0$ depends only on $S$. Let us write $\omega_m$ for the Betti form on $\cA_{S'}^{[m]}$. By \cite[Prop.~2.2.(iii)]{DGHUnifML}, there exists a point $\mathbf{z} \in (\mathscr{X}^{[m]}_{S'})^{\mathrm{sm}}(\mathbb{C})$ such that 
\begin{equation}\label{EqNonDegPoint}
(\omega_m|_{\mathscr{X}^{[m]}_{S'}(\mathbb{C})}^{\wedge (rm +\dim(S^\prime))})_\mathbf{z} \not= 0.
\end{equation}
Since this condition on $\mathbf{z}$ is open (in the usual topology), we may and do assume that $\mathbf{z}$ lies over a smooth point of $S'$.

For each $M \ge 1$,
we consider the proper $S'$-morphism 
\begin{align}\label{EqDDMA}
\mathscr{D}_0 \colon \ \ &(\cA_{S'}^{[m]})^{[M+1]}& &\longrightarrow& &(\cA_{S'}^{[m]})^{[M]}& \\
&(\mathbf{a}_0,\mathbf{a}_1,\ldots,\mathbf{a}_M)& &\longmapsto& &(\mathbf{a}_1-\mathbf{a}_0,\ldots,\mathbf{a}_M-\mathbf{a}_0),& \nonumber
\end{align}
using the fiberwise group structure.

Let $\bar{\eta}$ be the geometric generic point of $S'$. By the proof of \cite[Lem.~3.1]{ZhangEquidist}, there exists $M_0(S') > 0$ with the following property: For each $M \ge M_0(S')$, the generic fiber of the restriction of $\mathscr{D}_0$ to $\mathscr{X}_{\bar{\eta}}^{m(M+1)}$ is an orbit under the action of $\mathrm{Stab}_{\cA_{\bar{\eta}}^m}(\mathscr{X}_{\bar{\eta}}^m) \subseteq \cA_{\bar{\eta}}^m$ diagonally embedded into $(\cA_{\bar{\eta}}^m)^{M+1}$.
Notice that $\mathrm{Stab}_{\cA_{\bar{\eta}}^m}(\mathscr{X}_{\bar{\eta}}^m) = \mathrm{Stab}_{\cA_{\bar{\eta}}}(\mathscr{X}_{\bar{\eta}})^m$ is finite by the definition of $S_{\mathrm{gen}}$. We set $M = \max_{S'} \{M_0(S')\} > 0$ where $S'$ runs over all irreducible components of $S_{\mathrm{gen}}$. Then $M = M(S) > 0$ depends only on $S$. 

We further define the $S'$-morphism 
\begin{equation*}
\mathscr{D}_0^\prime := (\mathrm{id}, \mathscr{D}_0) \colon \ \ \cA_{S'}^{[m]} \times_{S'} (\cA_{S'}^{[m]})^{[M+1]} \longrightarrow \cA_{S'}^{[m]} \times_{S'} (\cA_{S'}^{[m]})^{[M]}    
\end{equation*}
and its restriction 
\begin{equation}\label{EqFZFinal}
\mathscr{D} : \ \ \mathscr{X}^{[m]}_{S'} \times_{S'} (\mathscr{X}^{[m]}_{S'})^{[M+1]} = \mathscr{X}^{[m(M+2)]}_{S'} \longrightarrow \mathscr{D}_0^\prime(\mathscr{X}^{[m(M+2)]}_{S'}).
\end{equation}
(Note that we also (co-)restrict on the range to simplify our notation.) For later use, we set
\begin{equation*}
    G_{\bar{\eta}}:=\{0\}\times \mathrm{Stab}_{\cA_{\bar{\eta}}}(\mathscr{X}_{\bar{\eta}})^m \subseteq \cA_{\bar{\eta}}^{m} \times (\cA_{\bar{\eta}}^m)^{M+1} = \cA_{\bar{\eta}}^{m(M+2)};
\end{equation*}
then the fibers of $\mathscr{D}|_{\overline{\eta}}$ are naturally orbits under $G_{\overline{\eta}}$.

The subvarieties
\begin{equation*}
\mathscr{X}^{[m(M+2)]}_{S'}\subseteq \cA_{S'}^{[m(M+2)]}     \ \ \text{and} \ \ 
\mathscr{D}(\mathscr{X}^{[m(M+2)]}_{S'}) \subseteq \cA_{S'}^{[m(M+1)]}
\end{equation*}
are both non-degenerate. For the former one, this follows directly from our choice of $m$. For the latter one, we remark that
\begin{equation*}
    \mathscr{D}(\mathscr{X}^{[m(M+2)]}_{S'})) = \mathscr{X}^{[m]}_{S'} \times_{S'} \mathscr{D}_0((\mathscr{X}^{[m]}_{S'})^{[M+1]}),
\end{equation*}
whose non-degeneracy follows from the non-degeneracy of $\mathscr{X}^{[m]}_{S'}$; see for example \cite[Lem.~24]{Kuehne:21} or \cite[Lem.~6.2]{GaoSurveyUML}.

\medskip
\noindent\underline{\textbf{Step~2:} Defining non-proportional measures $\mu_{S',1}$ and $\mathscr{D}^*\mu_{S',2}$ on $\cA_{S'}^{[m(M+2)]}(\mathbb{C})$}
\smallskip

Write $\omega_{m(M+2)}$ (resp.\ $\omega_{m(M+1)}$) for the Betti form on $\cA_{S'}^{[m(M+2)]}(\mathbb{C})$ (resp.\ $\cA_{S'}^{[m(M+1)]}(\mathbb{C})$). Furthermore, let $\mu_{S',1}$ (resp.\ $\mu_{S',2}$) on $\mathscr{X}^{[m(M+2)]}_{S'}(\mathbb{C})$ (resp.\ $\mathscr{D}(\mathscr{X}^{[m(M+2)]}_{S'})(\mathbb{C})$) be the equilibrium measure for which \cite[Thm.~1]{Kuehne:21} holds. By its proof (compare the end of Section 3 in \cite{Kuehne:21}), the measure $\mu_{S',1}$ (resp.\ $\mu_{S',2}$) is proportional to the restriction of
\begin{equation*}
    (\omega_{m(M+2)})^{\wedge (rm(M+2)+\dim(S^\prime))} \ \ \text{(resp.\ } (\omega_{m(M+1)})^{\wedge (rm(M+2)+\dim(S^\prime))}).)
\end{equation*}
We use this to prove that the measures $\mu_{S',1}$ and $\mathscr{D}^*\mu_{S',2}$ are non-proportional. In particular, $\mu_{S',1} \not= \mathscr{D}^*\mu_{S',2}$. 

For every point $\mathbf{t} \in (\mathscr{X}^{[m]}_{S'})(\mathbb{C})$,  write
\begin{equation*}
\Delta_{\mathbf{t}} := 
(\mathbf{t},\ldots,\mathbf{t}) \in (\mathscr{X}^{[m]}_{S'})^{[M+1]}(\mathbb{C}) = \mathscr{X}^{[m(M+1)]}_{S'}(\mathbb{C}).
\end{equation*}
for its $(M+1)$-fold self product. For the point $\mathbf{z} \in (\mathscr{X}^{[m]}_{S'})^{\mathrm{sm}}(\mathbb{C})$ chosen above \eqref{EqNonDegPoint}, the point $(\mathbf{z},\Delta_\mathbf{z})$ is a smooth point of $(\mathscr{X}^{[m]}_{S'} \times_{S'} (\mathscr{X}^{[m]}_{S'})^{[M+1]})(\mathbb{C})$ and
\[
(\mu_{S',1})_{(\mathbf{z}, \Delta_\mathbf{z})} \not= 0;
\]
see for example \cite[Lem.~11 and 25]{Kuehne:21}.

By definition, we have $\mathscr{D}((\mathbf{z},\Delta_{\mathbf{z}})) = (\mathbf{z},0,\dots,0)$ and hence
\begin{equation*}
    \mathscr{D}^{-1}\mathscr{D}((\mathbf{z},\Delta_{\mathbf{z}}))
     =  \{ (\mathbf{z},\Delta_{\mathbf{t}}) \ | \ \mathbf{t} \in \mathscr{X}^{[m]}_{S'}(\mathbb{C}) \}.
\end{equation*}
As this is of dimension $rm + \dim S^\prime > 0$ locally at $(\mathbf{z},\Delta_{\mathbf{z}})$, the differential
\begin{equation*}
d\mathscr{D} \colon T_{(\mathbf{z},\Delta_\mathbf{z})}  \mathscr{X}^{[m(M+2)]}_{S'} \longrightarrow T_{\mathscr{D}(\mathbf{z}, \Delta_{\mathbf{z}})}\mathscr{D}(\mathscr{X}^{[m(M+2)]}_{S'})
\end{equation*}
has a non-trivial kernel. So
\[
(\mathscr{D}^*\mu_{S',2})_{(\mathbf{z}, \Delta_{\mathbf{z}})} = 0,
\]
and, consequently, $\mu_{S',1} \not= \mathscr{D}^*\mu_{S',2}$.

\medskip
\noindent\underline{\textbf{Step~3:} Choice of test functions $f_{S',1},f_{S',2}$ and $\varepsilon_{S'} > 0$}
\smallskip

As $\mu_{S',1} \not= \mathscr{D}^*\mu_{S',2}$, there exists a constant $\epsilon_{S'} > 0$ as well as a function $f_{S',1} \in \mathscr{C}_{\mathrm{c}}^0(\mathscr{X}^{[m(M+2)]}_{S'}(\mathbb{C}))$ such that
\begin{equation}\label{EqIntegralDiff}
\left| \int_{\mathscr{X}^{[m(M+2)]}_{S'}(\mathbb{C})} f_{S',1}\mu_{S',1} - \int_{\mathscr{X}^{[m(M+2)]}_{S'}(\mathbb{C})} f_{S',1} \mathscr{D}^*\mu_{S',2}  \right| > 2\varepsilon_{S'}.
\end{equation}
This is, however, not enough for our purposes. We want $f_{S',1}$ to be of the form $$f_{S',1} =f_{S',2} \circ \mathscr{D}$$ for some $f_{S',2} \in \mathscr{C}_{\mathrm{c}}^0( \mathscr{D}(\mathscr{X}^{[m(M+2)]}_{S'})(\mathbb{C}))$. 

To achieve this goal, let $G_{\overline{\eta}}$ be the finite group defined in Step~1. By abuse of notation denote the projection by $\pi \colon \cA_{S'}^{[m(M+2)]} \rightarrow S'$. There exists a Zariski open dense $U\subseteq S^\prime$ such that $G_{\overline{\eta}}$ extends to a flat group scheme $G$ over $U$. Furthermore, there exists a Zariski open dense subset $V \subseteq \mathscr{D}(\mathscr{X}^{[m(M+2)]}_{U})$ such that $\mathscr{D}|_{\mathscr{D}^{-1}(V)} \colon \mathscr{D}^{-1}(V) \rightarrow V$ is finite \'{e}tale and each of its fibers $\mathscr{D}^{-1}(\mathbf{y})$, $\mathbf{y} \in V(\mathbb{C})$, is a $G_{\pi(\mathbf{y})}$-orbit, where $G_{\pi(\mathbf{y})}$ is the restriction of $G$ to the fiber over $\pi(\mathbf{y})$. We may and do assume that $f_{S^\prime,1}$ is supported in $V \subseteq \pi^{-1}(U)$ without compromising \eqref{EqIntegralDiff}. Furthermore, we can replace $f_{S^\prime,1}$ by 
\begin{align*}
f_{S^\prime,1} \colon \ \ &\mathscr{X}^{[m(M+2)]}_{U}& &\longrightarrow& &\mathbb{R}& \\
&(\mathbf{z}_0,\mathbf{z}_1,\ldots,\mathbf{z}_M)& &\longmapsto& &\sum_{\mathbf{t} \in G_{s}} f_{S',1}(\mathbf{z}_0 + \mathbf{t}, \dots, \mathbf{z}_M + \mathbf{t})&
\end{align*}
where $G_s$ is the restriction of $G$ to the fiber over $s = \pi(\mathbf{z}_0,\dots,\mathbf{z}_M)$ and \eqref{EqIntegralDiff} remains valid; indeed, both $\mu_{S^\prime,1}$ and $\mu_{S^\prime,2}$ are translation invariant since they are exterior powers of Betti forms, which are translation invariant.

In summary, $f_{S^\prime,1}$ is constant on the fibers of $\mathscr{D}$. Since its support is contained in the locus where $\mathscr{D}$ is étale, there exists a unique function $f_{S^\prime,2}$ such that $f_{S^\prime,1} = f_{S',2} \circ \mathscr{D}$.

\medskip
\noindent\underline{\textbf{Step~4:} Obtaining a height lower bound via equidistribution}
\smallskip

We apply the equidistribution result \cite[Thm.~1]{Kuehne:21} in the form of \cite[Lem.~23]{Kuehne:21} twice. Applied to $\mathscr{X}_{S^\prime}^{[m(M+2)]}$, $f_{S',1}$ and $\varepsilon_{S'}$, we obtain a constant $\delta_{\varepsilon_{S'},1}>0$ and a Zariski closed subset $Z_{S',1}\subsetneq \mathscr{X}^{[m(M+2)]}_{S'}$ such that
\begin{equation}
\label{equation::integral1}
    \left|  \int_{\mathscr{X}^{[m(M+2)]}_{S'}(\mathbb{C})} f_{S',1}\mu_{S',1} - \frac{1}{\#\mathbf{O}(\mathbf{x})}\sum_{\mathbf{y}\in \mathbf{O}(\mathbf{x})}f_{S',1}(\mathbf{y}) \right| < \varepsilon_{S^\prime}
\end{equation}
for all $\mathbf{x} \in (\mathscr{X}^{[m(M+2)]}_{S'} \setminus Z_{S',1})(\IQbar)$ with $\hat{h}_{\cL^{\boxtimes m(M+2)}}(\mathbf{x}) \ge \delta_{\varepsilon_{S'},1}$; here $\mathbf{O}(\mathbf{x})$ is the Galois orbit of $\mathbf{x}$ (over $\mathbb{Q}$). Applying the lemma to $\mathscr{D}(\mathscr{X}^{[m(M+2)]}_{S'})$, $f_{S',2}$ and $\varepsilon_{S'}$, we obtain similarly a constant $\delta_{\varepsilon_{S'},2}>0$ and a Zariski closed subset $Z_{S',2}\subsetneq \mathscr{D}(\mathscr{X}_{S^\prime}^{[m(M+2)]})$ such that
\begin{equation}
\label{equation::integral2}
    \left|  \int_{\mathscr{D}(X_{S^\prime}^{[m(M+2)]})(\mathbb{C})} f_{S',2}\mu_{S',2} - \frac{1}{\#\mathbf{O}(\mathbf{x}')}\sum_{\mathbf{y}'\in \mathbf{O}(\mathbf{x}')}f_{S',2}(\mathbf{y}') \right| < \varepsilon_{S^\prime}
\end{equation}
for all $\mathbf{x}' \in (\mathscr{D}(\mathscr{X}_{S^\prime}^{[m(M+2)]}) \setminus Z_{S',2})(\IQbar)$ with $\hat{h}_{\cL^{\boxtimes m(M+1)}}(\mathbf{x}') \ge \delta_{\varepsilon_{S'},2}$.

Set $\delta_{S'}:= \min\{\delta_{\varepsilon_{S'},1}, \delta_{\varepsilon_{S'},2}\} > 0$ and define the Zariski closed set $Z_{S'} = Z_{S',1} \cup \mathscr{D}^{-1}(Z_{S',2})$. 
Note that $Z_{S^\prime} \neq \mathscr{X}^{[m(M+2)]}_{S'}$ as  $\mathscr{D}$ is generically finite.

\medskip
\noindent{\textbf{Claim:}} For each point $\mathbf{x} \in (\mathscr{X}^{[m(M+2)]}_{S'}\setminus Z_{S'})(\IQbar)$, we have
\begin{enumerate}
\item[(i)] either $\hat{h}_{\cL^{\boxtimes m(M+2)}}(\mathbf{x}) \ge \delta_{S'}$, 
\item[(ii)] or $\hat{h}_{\cL^{\boxtimes m(M+1)}}(\mathscr{D}(\mathbf{x})) \ge \delta_{S'}$.
\end{enumerate}
\medskip

Indeed, assume both conditions were violated. Then 
\eqref{equation::integral1} would hold true, 
and 
\eqref{equation::integral2} with $\mathbf{x}' = \mathscr{D}(\mathbf{x})$ would also hold true.

But $$\frac{1}{\#\mathbf{O}(\mathbf{x})}\sum_{\mathbf{y}\in \mathbf{O}(\mathbf{x})}f_{S',1}(\mathbf{y}) = \frac{1}{\#\mathbf{O}(\mathscr{D}(\mathbf{x}))}\sum_{\mathbf{y}^\prime\in \mathbf{O}(\mathscr{D}(\mathbf{x}))}f_{S',2}(\mathbf{y}^\prime)$$ because $f_{S',1} = f_{S',2} \circ \mathscr{D}$. So we would have
\[
\left|\int_{\mathscr{X}^{[m(M+2)]}_{S'}(\mathbb{C})} f_{S',1}\mu_{S',1}-  \int_{\mathscr{D}(\mathscr{X}^{[m(M+2)]}_{S'})(\mathbb{C})} f_{S',2} \mu_{S',2} \right| \le 2\varepsilon_{S'},
\]
which contradicts \eqref{EqIntegralDiff} as $f_{S',1} = f_{S',2} \circ \mathscr{D}$. This finishes the proof of the claim.

\smallskip

Before moving on, let us take a closer look at the constants and objects obtained up to now. In Step~1, we introduced the integers $m= m(S) > 0$ and $M = M(S) > 0$, which depend only on $S$. The measures $\mu_{S',1}$ and $\mu_{S',2}$ from Step~2 clearly depend only on $S^\prime$, $m$ and $M$, and so do the test functions $f_{S',1}$, $f_{S',2}$ and the constant $\varepsilon_{S'} > 0$ chosen in Step~3. In this step, we obtained a closed subvariety $Z_{S'} \subsetneq \mathscr{X}^{[m(M+2)]}_{S'}$ and a constant $\delta_{S'}$ for each irreducible component $S'$ of $\overline{S_{\mathrm{gen}}}$; these depend only on $f_{S',1}$, $f_{S',2}$ and $\varepsilon_{S'} > 0$ and hence ultimately only on $S^\prime$, $m$ and $M$.

\medskip
\noindent\underline{\textbf{Step~5:} Conclusion and induction step}
\smallskip

The argument of this step is similar to the proof of 
Proposition~\ref{PropHtIneqStep1Aux}. The main difference is that the the height inequality \cite[Thm.~1.6 and B.1]{DGHUnifML} is replaced by the height lower bound from Step~4.

By abuse of notation, let $\pi \colon \cA_{S'} \rightarrow S'$ denote the structural morphism.
Consider the complement $S' \setminus \pi(\mathscr{X}_{S'}^{[m(M+2)]}\setminus Z_{S'})$ endowed with its reduced induced subscheme structure; it has dimension $\le \dim S' - 1 \le \dim S - 1$. Let $S_1,\ldots,S_k$ be the irreducible components of $\bigcup_{S'} S' \setminus \pi(\mathscr{X}_{S'}^{[m(M+2)]}\setminus Z_{S'})$ with $S'$ running over all irreducible components of $\overline{S_{\mathrm{gen}}}$. The set $\{S_1,\ldots,S_k\}$ is uniquely determined by $S$, $m$ and $M$. As both $m$ and $M$ depend only on $S$, the set $\{S_1,\ldots,S_k\}$ is  actually completely determined by $S$.

As in the proof of Proposition \ref{PropHtIneqStep1Aux}, we need to distinguish two cases in order to prove the proposition for a given $s \in S_{\mathrm{gen}}(\overline{\mathbb{Q}})$.

\smallskip
\noindent\underline{\textbf{Case (i): $s \in \bigcup_{i=1}^k S_i(\IQbar)$.}} 
\smallskip

Since $\dim S_i \le \dim S - 1$ for all $i \in \{1,\ldots,k\}$, we can apply the induction hypothesis for each subvariety $S_i$ separately. This yields constants $c_{i,2}'' = c_{i,2}''(r,d,S_i) > 0$ and $c_{i,3}'' = c_{i,3}''(r,d,S_i) >0$ such that, for each $s \in S_i(\IQbar)$, the set
\begin{equation}
\Sigma_{i,s}:=\left\{ x \in \mathscr{X}_s^{\circ}(\IQbar) : \hat{h}_{\cL}(x) \le c_{i,3}'' \right\}
\end{equation}
is contained in $X'(\IQbar)$ for a proper Zariski closed  $X' \subsetneq \mathscr{X}_s$ with $\deg_{\cL_s}(X') < c_{i,2}''$.

Set $c_{\mathrm{deg},2}'' := \max_{1\le i\le k} \{c_{i,2}''\}> 0$ and $c_{\mathrm{deg},3}'' := \min_{1\le i\le k} \{c_{i,3}''\}>0$. 
As the set $\{S_1,\ldots,S_k\}$ is completely determined by $S$, the constants $c_{\mathrm{deg},2}''$ and $c_{\mathrm{deg},3}''$ depend only on $r$, $d$ and $S$. 

For each $s \in \bigcup_{i=1}^k S_i(\IQbar)$, the set
\begin{equation}\label{EqDegEqDistSet}
\Sigma_{\mathrm{deg},s} := \left\{ x \in \mathscr{X}_s^{\circ}(\IQbar) : \hat{h}_{\cL}(x) \le  c_{\mathrm{deg},3}'' \right\}
\end{equation}
must be contained in $\Sigma_{i,s}$ for some $i$. Therefore, $\Sigma_{\mathrm{deg},s}$ is contained in $X'(\IQbar)$ for some proper Zariski closed $X' \subsetneq \mathscr{X}_s$ with $\deg_{\cL_s}(X') < c_{\mathrm{deg},2}''$. This proves the assertion of the proposition in this case.

\smallskip
\noindent\ul{\textbf{Case (ii): $s \in \pi(\mathscr{X}^{[m(M+2)]}_{S'}\setminus Z_{S'})(\IQbar)$ for some irreducible component $S'$ of $\overline{S_{\mathrm{gen}}}$.}} 
\medskip

Set $c_{S',3}'' = \delta_{S'}/4m(M+2)$ and consider
\begin{equation}\label{EqNonDegEqDistSet}
\Sigma_{S',s} = \left\{ x \in \mathscr{X}_s^{\circ}(\IQbar) : \hat{h}_{\cL}(x) \le c_{S',3}'' \right\} \subseteq \mathscr{X}_s(\IQbar). 
\end{equation}

We claim that 
\begin{equation}
\label{equation::claim}
\Sigma_{S',s}^{m(M+2)} \subseteq Z_s(\IQbar).
\end{equation}
Assume to the contrary that there exists some $\mathbf{x} = (x_1,\ldots,x_{m(M+2)}) \in (\Sigma_{S',s}^{m(M+2)} \setminus Z_s)(\IQbar)$. Then,
\begin{equation*}
\hat{h}_{\cL^{\boxtimes m(M+2)}}(\mathbf{x}) = \sum_{i=1}^{m(M+2)}\hat{h}_{\cL}(x_i) \le m(M+2) c_{S',3}'' < \delta_{S'}   
\end{equation*}
and
\begin{equation*}
\hat{h}_{\cL^{\boxtimes m(M+1)}}(\mathscr{D}(\mathbf{x})) = \sum_{i=1}^m \hat{h}_{\cL}(x_i) + \sum_{i=2}^{M} \sum_{j=1}^m \hat{h}_{\cL}(x_{i\cdot m + j} - x_{m + j}) \leq 4 m(M+1) c_{S',3}'' < \delta_{S^\prime}.
\end{equation*}
This contradicts the alternative height bounds obtained in Step~4, establishing the claim.

By the assumption of this case, $\mathscr{X}_s^{m(M+2)} \not= Z_s$. Therefore, we can apply Lemma~\ref{LemmaNogaAlon} to $X = \mathscr{X}_s$, $L = \cL_s|_{\mathscr{X}_s}$, $Z = Z_s$ and $\Sigma = \Sigma_{S',s}$. In this way, we obtain a proper Zariski closed  $X'\subsetneq \mathscr{X}_s$ such that
\begin{enumerate}
\item[(i)] $\deg_{\cL_s} (X') \le c(m(M+2), r, d, \deg_{\cL_s^{\boxtimes m}}Z_s)$, and
\item[(ii)] $\Sigma_{S',s} \subseteq X'(\IQbar)$.
\end{enumerate}

As the degree $\deg_{\cL_s^{\boxtimes m}}Z_s$ depends only on $S'$, $m$ and $M$, property (i) simplifies to $\deg_{\cL_s} (X') \le c_{S',2}''$ with a constant $c_{S',2}''$ depending only on $r$, $d$, $S'$, $m$ and $M$. In summary, the set $\Sigma_{S',s}$ 
is contained in $X'(\IQbar)$ for some Zariski closed  $X' \subsetneq \mathscr{X}_s$ with $\deg_{\cL_s}(X') \le c_{S',2}''$.

Set $c_2^{*\prime\prime} := \max_{S'} \{c_{S',2}''\}> 0$ and $c_3^{*\prime\prime} := \min_{S'} \{c_{S',3}''\}>0$ where $S'$ runs over all irreducible components of $\overline{S_{\mathrm{gen}}}$. 
The subset $S_{\mathrm{gen}}$ of $S$ is uniquely determined by $S$. Hence the set $\{S'\}$ of irreducible components of $\overline{S_{\mathrm{gen}}}$ is uniquely determined by $S$. Moreover $m = m(S) > 0$ and $M = M(S) > 0$ depend only on $S$. Consequently, the constants $c_2^{*\prime\prime}$ and $c_3^{*\prime\prime}$ depend only on $r$, $d$ and $S$. 

For each $s \in \bigcup_{S'} \pi(\mathscr{X}^{[m(M+2)]}_{S'}\setminus Z_{S'})(\IQbar)$, the set
\begin{equation}\label{EqNonDegEqDistUnionSet}
\Sigma_s^* := \left\{ x \in \mathscr{X}_s^{\circ}(\IQbar) : \hat{h}_{\cL}(x) \le c_3^{*\prime\prime} \right\}
\end{equation}
must be contained in $\Sigma_{S',s}$ for some $S'$. So $\Sigma_s^*$ 
is contained in $X'(\IQbar)$ for a proper Zariski closed  $X' \subsetneq \mathscr{X}_s$ with $\deg_{\cL_s}(X') < c_2^{*\prime\prime}$. This concludes the induction step in this case.
\end{proof}

\subsection{Proof of Proposition~\ref{PropEqDistStep1}}
Proposition~\ref{PropEqDistStep1} can be deduced from Proposition~\ref{PropEqDistStep1Aux} by an almost verbal copy of the proof of Proposition~\ref{PropHtIneqPPAV} and the argument in $\mathsection$\ref{SubsectionProofOfHtIneqApp}. Even more, the arguments needed here are simpler because we no longer need to deal with the Faltings height $h_{\mathrm{Fal}}(A)$. Instead of repeating the proof, we hereby give a brief summary and leave it to the reader to supplement details from the previous sections.

Let $(A,L)$ be a polarized abelian variety defined over $\IQbar$ with $\deg_L A = l$. Let $X$ be an irreducible subvariety of $A$ with $\dim X = r$ and $\deg_L X = d$ such that $X$ generates $A$. We may  assume $r \ge 1$; otherwise the  proposition is trivial.

We start with the case where $(A,L)$ is principally polarized, \textit{i.e.}\ $l = g!$. 

Recall the universal abelian variety $\mathfrak{A}_g \rightarrow \mathbb{A}_g$ and the symmetric relatively ample  line bundle $\mathfrak{L}_g$ on $\mathfrak{A}_g/\mathbb{A}_g$ from the beginning of $\mathsection$\ref{SubsectionEqDistFamilies}. 
The pair $(A,L)$ gives rise to a point $b \in \mathbb{A}_g(\IQbar)$ such that $(\mathfrak{A}_g)_b \cong A$ and $c_1(\mathfrak{L}_g|_A) = 2c_1(L) = c_1(L \otimes L_{-})$ for $L_{-} = [-1]^*L$. By \eqref{EqDegreeWRTLineBundleOverUnivFamily}, $\deg_{\mathfrak{L}_g^{\otimes 4}|_A}X = 8^r d$. For the line bundle $\cL = \iota^*\mathfrak{L}_g^{\otimes 4}$, we have seen in \eqref{EqNTHeightChangetoL} that $\hat{h}_{\cL|_A} = 4\hat{h}_{L\otimes L_{-}}$ as height functions on $A(\IQbar)$.

Consider the restricted Hilbert scheme $\mathbf{H}^{\circ}:=\mathbf{H}^{\circ}_{r, 8^r d}(\mathfrak{A}_g/\mathbb{A}_g)$ from \eqref{EqHilbSchemeOpenLocusFamily} and retain the commutative diagram \eqref{EqSubsectionEqDistFamiliesHilbDiagram}
\[
\xymatrix{
\mathscr{X}_{\mathbf{H}^{\circ}} \ar@{^(->}[r] \ar[rd] & 
\cA_{\mathbf{H}^{\circ}} \ar[d]^-{\pi} \ar[r]^-{\iota} \pullbackcorner & \mathfrak{A}_g \ar[d]^-{\pi^{\mathrm{univ}}}  \\
& \mathbf{H}^{\circ} \ar[r]^-{\iota_{\mathbf{H}^{\circ}}} & \mathbb{A}_g
}
\]
where $\pi|_{\mathscr{X}_{\mathbf{H}^{\circ}}} \colon \mathscr{X}_{\mathbf{H}^{\circ}} \rightarrow \mathbf{H}^{\circ}$ is the universal family. All varieties and morphisms in this diagram are defined over $\IQbar$. The pair $(X, (A,L))$ is parametrized by a point $s \in \mathbf{H}^{\circ}(\IQbar)$, which means that $X = \mathscr{X}_s$, $A = \cA_s = (\mathfrak{A}_g)_b$, and $\iota_{\mathbf{H}^{\circ}}(s) = b$.

Let $\mathbf{H}^{\circ}_{\mathrm{gen}}$ be the subset of $\mathbf{H}^{\circ}$ as defined by \eqref{EqConstructibleSet}. If the stabilizer $\mathrm{Stab}_A(X)$) of $X$ in $A$ has positive dimension, then $X^{\circ} = \emptyset$ by  definition of the Ueno locus and the proposition trivially holds true. Thus we may and do assume that $\mathrm{Stab}_A(X)$ is finite. Then the point $s \in \mathbf{H}^{\circ}(\IQbar)$ which parametrizes $(X, (A,L))$ is in $\mathbf{H}^{\circ}_{\mathrm{gen}}$

We apply Proposition~\ref{PropEqDistStep1Aux} to $S = \mathbf{H}^{\circ}$, and hence obtain constants $c_2''$ and $c_3''$ that depend only on $r$, $8^r d$ and $\mathbf{H}^{\circ}$ (so only on $g$, $d$ and $r$)  such that the set 
\[
\Sigma := \left\{ x \in \mathscr{X}_s^{\circ}(\IQbar) : \hat{h}_{\cL|_A}(x) \le  c_3'' \right\} = \left\{ x \in X^{\circ}(\IQbar) : \hat{h}_{L\otimes L_{-}}(x) \le  c_3''/4 \right\}
\] 
is contained in some proper Zariski closed  $X' \subsetneq X$ with $\deg_{\cL|_A}(X') \le c_2''$. Thus Proposition~\ref{PropEqDistStep1} for this case holds true because $\deg_L X' < \deg_{\cL|_A}(X')$ (since $c_1(\cL|_A) = 4c_1(\mathfrak{L}_g|_A) = 8c_1(L)$).

Now let us turn to arbitrary polarized abelian varieties $(A,L)$. By Lemma~\ref{LemmaAbIsogPPAV}.(iv), there exist a principally polarized abelian variety $(A_0,L_0)$ defined over $\IQbar$ and an isogeny $u \colon A_0 \rightarrow A$ such that $u^*L = [l/g!]^*L_0$; see below \eqref{EqFaltingsHeight}. In particular, $u^*(L\otimes L_{-}) \cong (L_0\otimes (L_0)_{-})^{\otimes (l/g!)^2}$, where $(L_0)=[-1]^*L_0$.

Let $X_0$ be an irreducible component of $u^{-1}(X)$. Then $\deg_{u^*L}X_0 \le d(l/g!)^{2g-1}$ by \eqref{EqDegreeChangeIsogeny}, and $X_0$ is not contained in any proper subgroup of $A_0$. Thus we can apply the conclusion for the principally polarized case to $X_0 \subseteq A_0$ and get two constants $c_2''$ and $c_3''$ depending only on $g$, $r$, $l$ and $d$.\footnote{Again, we first of all get constants $c_{0,2}''(g,r,d') > 0$ and $c_{0,3}''(g,r,d') > 0$ for each $1 \le d' \le d$, and then set $c_2'' := \max_{1\le d' \le d(l/g!)^{2g-1}}\{c_{0,2}''(g,r,d')\}$ and $c_3'' := \min_{1 \le d' \le d(l/g!)^{2g-1}}\{c_{0,3}''(g,r,d')\}$.}

By  definition of the Ueno locus, we have $X^{\circ} = u(X_0)^{\circ} \subseteq u(X_0^{\circ})$. 
So we have $\Sigma \subseteq u(\Sigma_0)$ for $$\Sigma:=\left\{ x \in X^{\circ}(\IQbar) : \hat{h}_{L\otimes L_{-}}(x) \le  c_3'' \right\}$$ and $$\Sigma_0 = \left\{ x_0 \in X_0^{\circ}(\IQbar) : \hat{h}_{L_0 \otimes (L_0)_{-}}(x_0) \le (l/g!)^2 c_3'' \right\}.$$ The conclusion follows from the principally polarized case upon replacing $c_3''$ by $(l/g!)^2 c_3''$. \qed

\section{Proof of the gap principle (Theorem \ref{ThmSmallPointHighDim})}\label{SectionEndOfNGP}
In this section we combine Proposition~\ref{PropHtIneqStep1} and Proposition~\ref{PropEqDistStep1} to finish the proof of the \textit{generalized New Gap Principle} (Theorem~\ref{ThmSmallPointHighDim}) with the same argument for curves \cite[Prop.~9.2]{GaoSurveyUML}; this argument is eventually related to \cite[Prop.~2.3]{DGHBog}.
\begin{prop}\label{ThmSmallPointHighDimEqMainBody}
Let $g$, $l$, $r$, and $d$ be positive integers. There exist constants $c_1 = c_1(g,l,r,d) > 0$ and $c_2 = c_2(g,l,r,d) > 0$  with the following property. For 
\begin{itemize}
\item each polarized abelian variety $(A,L)$ of dimension $g$ defined over $\IQbar$ with $\deg_L A = l$,
\item and each irreducible subvariety $X$ of $A$ with $\dim X = r$ and $\deg_L X = d$, such that $X$ generates $A$,
\end{itemize}
the set
\begin{equation}\label{EqSmallPointHighDimEq}
\left\{P \in X^{\circ}(\IQbar): \hat{h}_{L\otimes [-1]^*L}(P) \le c_1 \max\{1,h_{\mathrm{Fal}}(A)\} \right\}
\end{equation}
is contained in $X'(\IQbar)$ for a proper Zariski closed $X' \subsetneq X$ with $\deg_L X' < c_2$.
\end{prop}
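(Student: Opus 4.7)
The plan is to derive Proposition~\ref{ThmSmallPointHighDimEqMainBody} by merging Proposition~\ref{PropHtIneqStep1} and Proposition~\ref{PropEqDistStep1} through a dichotomy on the size of $h_{\mathrm{Fal}}(A)$. Concretely, let $c_1', c_2', c_3'$ be the constants from Proposition~\ref{PropHtIneqStep1} and $c_2'', c_3''$ the constants from Proposition~\ref{PropEqDistStep1}, all evaluated for the parameters $(g,l,r,d)$. I will set
\[
c_1 := \min\!\left\{\frac{c_1'}{2},\ \frac{c_1' c_3''}{2 c_3'}\right\} > 0, \qquad c_2 := \max\{c_2', c_2''\} > 0,
\]
and define the threshold $T := 2 c_3' / c_1'$. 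These constants depend only on $g,l,r,d$ since both input constants do.

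Next, for a point $P$ in the set \eqref{EqSmallPointHighDimEq}, I split into two cases according to whether $\max\{1, h_{\mathrm{Fal}}(A)\} \geq T$ or $< T$. In the first (large Faltings height) case, the choice $c_1 \leq c_1'/2$ gives $(c_1' - c_1)\max\{1, h_{\mathrm{Fal}}(A)\} \geq (c_1'/2) T = c_3'$, so that
\[
\hat{h}_{L\otimes L_{-}}(P) \leq c_1 \max\{1, h_{\mathrm{Fal}}(A)\} \leq c_1' \max\{1, h_{\mathrm{Fal}}(A)\} - c_3',
\]
and Proposition~\ref{PropHtIneqStep1} places $P$ inside a proper Zariski closed $X' \subsetneq X$ with $\deg_L(X') < c_2' \leq c_2$. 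In the second (small Faltings height) case, we have $c_1 \max\{1, h_{\mathrm{Fal}}(A)\} < c_1 T \leq c_3''$ by the second term in the minimum defining $c_1$, hence
\[
\hat{h}_{L\otimes L_{-}}(P) \leq c_1 \max\{1, h_{\mathrm{Fal}}(A)\} \leq c_3'',
\]
and Proposition~\ref{PropEqDistStep1} places $P$ inside a proper Zariski closed $X' \subsetneq X$ with $\deg_L(X') < c_2'' \leq c_2$. In either case, the containment and degree bound of the conclusion are achieved (with possibly different $X'$ depending on the case, but sharing the uniform degree bound $c_2$).

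This reduction is essentially formal once the two input propositions are in hand; I do not expect any substantive obstacle beyond bookkeeping of the constants and confirming that the choice of $c_1$ depends only on $g,l,r,d$. The only subtlety worth flagging is that the resulting $X'$ is not canonical: its identity depends on which regime the pair $(A,X)$ lies in. However, since Theorem~\ref{ThmSmallPointHighDim} and the downstream applications only require the \emph{existence} of such an $X'$ with the claimed uniform degree bound, this case-by-case construction is sufficient. The argument is the direct analogue of the curve case treated in \cite[Prop.9.2]{GaoSurveyUML} (itself descending from the dichotomy of \cite[Prop.2.3]{DGHBog}), and no new geometric input is required here beyond what was already built in the preceding two sections.
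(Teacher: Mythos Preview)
Your proposal is correct and follows essentially the same approach as the paper's proof: both combine Proposition~\ref{PropHtIneqStep1} and Proposition~\ref{PropEqDistStep1} via a dichotomy on the size of $\max\{1,h_{\mathrm{Fal}}(A)\}$ relative to the threshold $2c_3'/c_1'$, and your constants coincide (up to the harmless $\max\{1,\cdot\}$) with the paper's choice $c_1 = \min\{c_3''/\max\{1,2c_3'/c_1'\},\, c_1'/2\}$ and $c_2 = \max\{c_2',c_2''\}$. The only cosmetic difference is that the paper phrases the key claim contrapositively while you argue directly; since the case split depends only on $A$ and not on $P$, a single $X'$ works for the whole set, exactly as you note.
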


\begin{proof}
Let $(A,L)$ and $X$ be as in the proposition. Denote for simplicity by $L_{-} := [-1]^*L$.

By Proposition~\ref{PropHtIneqStep1}, there exist constants $c_1' = c_1'(g,l,r,d) > 0$ ,$c_2' = c_2'(g,l,r,d) > 0$ and $c_3'= c_3'(g,l,r,d)>0$ such that
\begin{equation}\label{EqSet1}
\left\{P \in X^{\circ}(\IQbar): \hat{h}_{L\otimes L_{-}}(P) \le c_1' \max\{1,h_{\mathrm{Fal}}(A)\} - c_3' \right\}
\end{equation}
is contained in a proper Zariski closed $X' \subseteq X$ with $\deg_L X' < c_2'$.

By Proposition~\ref{PropEqDistStep1}, there exist constants $c_2'' = c_2''(g,l,r,d)>0$ and $c_3'' = c_3''(g,l,r,d) > 0$ such that
\begin{equation}\label{EqSet2}
\left\{P \in X^{\circ}(\IQbar): \hat{h}_{L\otimes L_{-}}(P) \le  c_3'' \right\}
\end{equation}
is contained in a proper Zariski closed $X' \subseteq X$ with $\deg_L X' < c_2''$.

Now set
\begin{equation}
c_1 := \min\left\{\frac{c_3''}{\max\{1,2c_3'/c_1'\}}, \frac{c_1'}{2}\right\} \quad \text{ and } \quad c_2:= \max\{c_2',c_2''\}.
\end{equation}
We will prove that these are the desired constants.

To prove this, it suffices to prove the following claim.

\smallskip
\noindent\ul{\textbf{Claim: If $P \in X^{\circ}(\IQbar)$ satisfies $\hat{h}_{L\otimes L_{-}}(P) \le c_1 \max\{1,h_{\mathrm{Fal}}(A)\} $, then $P$ is in either the set {\eqref{EqSet1}} or the set {\eqref{EqSet2}}.}}
\smallskip

Let us prove this claim. 
Suppose $P \in X^{\circ}(\IQbar)$ is not in \eqref{EqSet1} or \eqref{EqSet2}, \textit{i.e.}, $\hat{h}_{L\otimes L_{-}}(P) > c_1' \max\{1,h_{\mathrm{Fal}}(A)\} - c_3'$ and $\hat{h}_{L\otimes L_{-}}(P) > c_3''$. We wish to prove $\hat{h}_{L\otimes L_{-}}(P) > c_1 \max\{1,h_{\mathrm{Fal}}(A)\} $.

We split up to two cases on whether or not $\max\{1,h_{\mathrm{Fal}}(A)\}  \le \max\{1,2c_3'/c_1'\}$.

In the first case, \textit{i.e.}, $\max\{1,h_{\mathrm{Fal}}(A)\}  \le \max\{1,2c_3'/c_1'\}$, we have
\[
\hat{h}_{L\otimes L_{-}}(P) > c_3'' \ge c_3'' \frac{\max\{1,h_{\mathrm{Fal}}(A)\}}{\max\{1,2c_3'/c_1'\}} = \frac{c_3''}{\max\{1,2c_3'/c_1'\}} \max\{1,h_{\mathrm{Fal}}(A)\} \ge c_1 \max\{1,h_{\mathrm{Fal}}(A)\}.
\]
In the second case,  \textit{i.e.}, $\max\{1,h_{\mathrm{Fal}}(A)\} > \max\{1,2c_3'/c_1'\}$, we have $c_1' \max\{1,h_{\mathrm{Fal}}(A)\} -c_3' \ge (c_1'/2) \max\{1,h_{\mathrm{Fal}}(A)\}$ and hence
\[
\hat{h}_{L\otimes L_{-}}(P) > \frac{c_1'}{2}\max\{1,h_{\mathrm{Fal}}(A)\} \ge c_1 \max\{1,h_{\mathrm{Fal}}(A)\}.
\]
Hence we are done.
\end{proof}

\begin{proof}[Proof of Theorem~\ref{ThmSmallPointHighDim}]
Let $A$ be an abelian variety of dimension $g$, let $L$ be an ample line bundle, and let $X$  be an irreducible subvariety which generates $A$. Assume all these objects are defined over $\IQbar$. Then $(A,L)$ is a polarized abelian variety.

Write $d = \deg_L X$, $r = \dim X$, $l = \deg_L A$. Then $l \le c(g,d)$ by Lemma~\ref{LemmaSmallestAbVarGenerated}.

Since $X$ generates $A$, we have that $r \ge 1$. 
Thus we can apply Proposition~\ref{ThmSmallPointHighDimEqMainBody} to $(A,L)$ and $X$. Then we obtain constants $c_1 = c_1(g,l,r,d) > 0$ and $c_2 = c_2(g,l,r,d)>0$ such that the set
\[
\Sigma:=\left \{ P \in X^{\circ}(\IQbar) : \hat{h}_{L\otimes [-1]^*L}(P) \le c_1\max\{1, h_{\mathrm{Fal}}(A)\} \right \}
\]
is contained in a proper Zariski closed $X' \subsetneq X$ with $\deg_L X' < c_2$.

Now we can conclude by replacing $c_1$ by $\min_{1 \le r \le g, 1 \le l \le c(g,d)}\{c_1(g,l,r,d)\} > 0$ and replacing $c_2$ by $\max_{1\le r\le g, 1 \le l \le c(g,d)} \{c_2(g,l,r,d)\} > 0$.
\end{proof}

\section{Proof of the uniform Mordell--Lang conjecture (Theorem \ref{MainThm2})}\label{SectionUML}

\subsection{A theorem of R\'{e}mond}
In this subsection, we work over $\IQbar$.

We start by recalling the following result, which is a consequence of R\'{e}mond's generalized  Vojta's Inequality \cite[Thm.~1.2]{Remond:Vojtasup} for points in  $X^{\circ}(\IQbar)$, the generalized Mumford's Inequality \cite[Thm.~3.2]{Remond:Decompte} for points in $X^{\circ}(\IQbar) \cap \Gamma$, and the technique to remove the height of the subvariety \cite[$\mathsection$3.b)]{Remond:Decompte}.

\medskip

Let $A$ be an abelian variety of dimension $g$, and $L$ be a \textit{symmetric} ample line bundle on $A$.

Let $X$ be an irreducible subvariety of $A$, and $\Gamma$ be a finite rank subgroup of $A(\IQbar)$. We say that \textit{the assumption {\tt (Hyp pack)} holds true for $(A,L)$, $X$ and $\Gamma$}, if there exists a constant $c_0 = c_0(g, \deg_L X) > 0$ satisfying the following property: for each $P_0 \in X(\IQbar)$,
\begin{equation}\label{EqRemondAssumption}
\#\left\{P - P_0 \in (X^{\circ}(\IQbar)-P_0) \cap \Gamma : \hat{h}_L(P - P_0) \le c_0^{-1} \max\{1,h_{\mathrm{Fal}}(A)\} \right\} \le c_0^{\mathrm{rk}\Gamma + 1}.
\end{equation}

\begin{thm}[R\'{e}mond]\label{ThmRemond}
Assume that {\tt (Hyp pack)} holds true for all $(A,L)$, $X$, $\Gamma$ (as above) such that $X$ generates $A$.

Then for each polarized abelian variety $(A,L)$ with $L$ symmetric, each irreducible subvariety $X$ of $A$ and each  finite rank subgroup $\Gamma$ of $A(\IQbar)$,  we have
\begin{equation}\label{EqRemond}
\#X^{\circ}(\IQbar) \cap \Gamma \le c(g,\deg_L X, \deg_L A)^{\mathrm{rk}\Gamma + 1}.
\end{equation}
\end{thm}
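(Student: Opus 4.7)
The plan is to combine R\'emond's generalized Vojta and Mumford inequalities, which dominate the ``large'' points, with the hypothesis \texttt{(Hyp pack)}, which dominates the ``small'' points. Since \texttt{(Hyp pack)} is assumed only for subvarieties generating their ambient abelian variety, a preliminary reduction to that case is needed.

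First I would reduce to the generating case: assume $X^{\circ}(\IQbar)\cap\Gamma\neq\emptyset$ (otherwise there is nothing to prove), pick $P_0\in X^{\circ}(\IQbar)\cap\Gamma$, and let $A'$ be the abelian subvariety of $A$ generated by $X-X$. Setting $Y:=X-P_0\subseteq A'$, $L':=L|_{A'}$, and $\Gamma':=\Gamma\cap A'(\IQbar)$, one checks that $Y$ generates $A'$, that $Y^{\circ}=X^{\circ}-P_0$, that $\deg_{L'}(Y)=\deg_L(X)$ by translation invariance of Chern classes on $A$, and that $\deg_{L'}(A')$ is bounded in terms of $g$ and $\deg_L(X)$ by Lemma~\ref{LemmaSmallestAbVarGenerated}. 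Since $P_0\in\Gamma$, the map $P\mapsto P-P_0$ is a bijection $X^{\circ}(\IQbar)\cap\Gamma\xrightarrow{\sim}Y^{\circ}(\IQbar)\cap\Gamma'$, with $\mathrm{rk}\Gamma'\leq\mathrm{rk}\Gamma$. It thus suffices to bound $\#Y^{\circ}(\IQbar)\cap\Gamma'$. I would then split this set at the threshold $H:=c_0^{-1}\max\{1,h_{\mathrm{Fal}}(A')\}$, where $c_0=c_0(\dim A',\deg_{L'} Y)$ is the constant supplied by \texttt{(Hyp pack)} for $(A',L',Y,\Gamma')$. Applying the hypothesis with base point $0\in Y(\IQbar)$ bounds the small part $\{Q:\hat{h}_{L'}(Q)\leq H\}$ by $c_0^{\mathrm{rk}\Gamma'+1}\leq c_0^{\mathrm{rk}\Gamma+1}$.

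For the large part $\{Q:\hat{h}_{L'}(Q)>H\}$, I would run R\'emond's Vojta-style packing argument: cover the unit sphere in $\Gamma'_{\mathbb{R}}$ by about $c^{\mathrm{rk}\Gamma'}$ narrow cones, use the generalized Vojta inequality \cite[Thm.1.2]{Remond:Vojtasup} to limit how many points of comparable height can lie in one cone, and use the generalized Mumford inequality \cite[Thm.3.2]{Remond:Decompte} to force the heights along each cone into a geometric progression whose smallest value is $\geq c'\max\{1,h_{\mathrm{Fal}}(A')\}$. The residual dependence on the height of the subvariety would be removed via the modified version of these results from \cite[Thm.6.8]{DaPh:07} combined with \cite[\S3.b)]{Remond:Decompte}, as flagged in the introduction. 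Summing the small- and large-point estimates yields a bound of the form $c(g,\deg_L X,\deg_L A)^{\mathrm{rk}\Gamma+1}$ as required. The main obstacle is the bookkeeping of constants at the small/large interface: the Mumford-produced height gap must be proportional to $\max\{1,h_{\mathrm{Fal}}(A')\}$ with the very constant $c_0^{-1}$ that \texttt{(Hyp pack)} uses, so that no intermediate layer of points escapes both methods; ensuring this alignment, while keeping the final constants expressible only in terms of $g$, $\deg_L X$, and $\deg_L A$, is the precise role of the modified Vojta/Mumford bounds cited above.
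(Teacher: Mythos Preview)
Your overall architecture matches the paper's: reduce to the case where $X$ generates $A$, handle large points by R\'emond's Vojta/Mumford machinery (with the height of $X$ removed via \cite[\S3.b)]{Remond:Decompte} or equivalently \cite[Thm.6.8]{DaPh:07}), and handle small points by \texttt{(Hyp pack)}. Your reduction step is in fact slightly cleaner than the paper's (you translate by a point already in $\Gamma$, so the rank does not go up), though it only works when $X^\circ(\IQbar)\cap\Gamma\neq\emptyset$, which is harmless.

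There is, however, a genuine gap in the small/large interface, and your diagnosis of how it is bridged is not correct. You invoke \texttt{(Hyp pack)} \emph{once}, at the single base point $0\in Y(\IQbar)$, to control $\{Q:\hat h_{L'}(Q)\le c_0^{-1}\max\{1,h_{\mathrm{Fal}}(A')\}\}$. But the Vojta/Mumford machinery, even in its modified form, only controls $\{Q:\hat h_{L'}(Q)> c''\max\{1,h_{\mathrm{Fal}}(A')\}\}$ for a constant $c''$ that depends on $g,\deg_L X,\deg_L A$ and is in general \emph{much larger} than $c_0^{-1}$. The cited results do \emph{not} align these two thresholds; there is an intermediate annulus that neither estimate covers. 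This is precisely why \texttt{(Hyp pack)} is formulated for \emph{every} $P_0\in X(\IQbar)$, not just one.

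The paper closes the gap by a ball-packing argument in $(\Gamma'\otimes\mathbb{R},\hat h_{L'}^{1/2})$: the set $\{Q\in Y^\circ(\IQbar)\cap\Gamma':\hat h_{L'}(Q)\le c''\max\{1,h_{\mathrm{Fal}}(A')\}\}$ lies in a ball of radius $R=(c''\max\{1,h_{\mathrm{Fal}}(A')\})^{1/2}$, which one covers by at most $(1+2R/r)^{\mathrm{rk}\Gamma'}$ balls of radius $r=(c_0^{-1}\max\{1,h_{\mathrm{Fal}}(A')\})^{1/2}$ centred at points $P_0$ of the set itself. For each such centre, \texttt{(Hyp pack)} applied with base point $P_0$ bounds the number of points in that ball by $c_0^{\mathrm{rk}\Gamma'+1}$. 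Since $R/r=(c''c_0)^{1/2}$ depends only on $g,\deg_L X,\deg_L A$, this yields the required bound. So the missing idea in your sketch is not a refinement of the Vojta/Mumford constants, but rather the repeated use of \texttt{(Hyp pack)} at varying base points, glued together by packing.
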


A more detailed proof of this theorem can be found in Appendix~\ref{AppendixRemond}. We hereby give a brief explanation by taking David--Philippon's formulation of R\'{e}mond's result.

\begin{proof}
We start with a d\'{e}vissage. 
More precisely, we reduce to the case where $X$ generates $A$. 
Indeed, let $A'$ be the abelian subvariety of $A$ generated by $X-X$. Then $X \subseteq A' + Q$ for some $Q \in A(\IQbar)$. The subgroup $\Gamma'$ of $A(\IQbar)$ generated by $\Gamma$ and $Q$ has rank $\le \mathrm{rk}\Gamma + 1$. We have $(X-Q)^{\circ} = X^{\circ} - Q$ by definition of the Ueno locus, $(X^{\circ}(\IQbar)-Q) \cap \Gamma  \subseteq  (X^{\circ}(\IQbar) - Q) \cap \Gamma' = X^{\circ}(\IQbar) \cap \Gamma'$ and $\deg_L(X-Q) = \deg_L X$. By Lemma~\ref{LemmaSmallestAbVarGenerated}, we have $\deg_L A' \le c'(g,\deg_L X)$. Therefore 
if \eqref{EqRemond} holds true for $X-Q \subseteq A'$, $L|_{A'}$ and $\Gamma' \cap A'(\IQbar)$, then $\#X^{\circ}(\IQbar) \cap \Gamma \le c(g,\deg_L X)^{\mathrm{rk}\Gamma' + 1} \le  c(g,\deg_L X)^{\mathrm{rk}\Gamma + 2}$. So we can conclude by replacing $c$ with $c^2$. Thus we are reduced to the case where $X$ generates $A$.

From now on, assume $X$ generates $A$. We take the formulation of David--Philippon \cite[Thm.~6.8]{DaPh:07} of R\'{e}mond's result \cite{Remond:Decompte}.

It follows  from Tate's construction  that there exists a constant $c_{\mathrm{NT}}(A)\ge 0$, which depends on $A$, such that $|\hat h_{L}(P) - h(P)|\le c_{\mathrm{NT}}(A)$ for all $P\in A(\IQbar)$. 

Let $h_1(A)$ denote the Weil height of the polynomials defining the addition and the substraction on $A$.

It is known that $c_{\mathrm{NT}}(A), h_1(A) \le c' \max\{1,h_{\mathrm{Fal}}(A)\}$ for some $c' = c'(g,\deg_L A) >0$; see \cite[equation (6.41)]{DaPh:07}. Alternatively this can be deduced from \cite[(8.4) and (8.7)]{DGHUnifML}.

Let $\eta \ge 1$ be a real number. Then 
\begin{multline}\label{EqRemondDP2}
\#\left\{ P \in X^{\circ}(\IQbar) \cap \Gamma :  \hat{h}_L(P) \le \eta \max\{1,c_{\mathrm{NT}}(A), h_1(A)\} \right\} \\
\le \#\left\{ P \in X^{\circ}(\IQbar) \cap \Gamma :  \hat{h}_L(P) \le \eta c' \max\{1,h_{\mathrm{Fal}}(A)\} \right\}.
\end{multline}

Since $X$ generates $A$,  we can apply \eqref{EqRemondAssumption} to $X - P_0$ for each $P_0 \in X(\IQbar)$.

Set $R = (\eta c' \max\{1, h_{\mathrm{Fal}}(A)\})^{1/2}$ and $r = (c_0^{-1} \max\{1,h_{\mathrm{Fal}}(A)\})^{1/2}$ with $c_0$ from \eqref{EqRemondAssumption}.

Consider the real vector space $\Gamma\otimes_{\mathbb{Q}}\mathbb{R}$ endowed with the Euclidean norm $| \cdot | = \hat{h}_L^{1/2}$. 
By an elementary ball packing argument, any subset of $\Gamma \otimes\mathbb{R}$ contained in a closed ball of radius $R$ centered at $0$ is covered by at most $(1+2R/r)^{\mathrm{rk}\Gamma}$ closed balls of radius $r$ centered at the elements $P - P_0$ of the given subset \eqref{EqRemondAssumption}; see \cite[Lem.~6.1]{Remond:Decompte}. Thus the number of balls in the covering is at most $(1+2\sqrt{\eta c' c_0})^{\mathrm{rk}\Gamma}$. 
 But each closed ball of radius $r$ centered at some $P - P_0$ in \eqref{EqRemondAssumption} contains at most $c$ elements by \eqref{EqRemondAssumption}. So
\begin{equation}\label{EqRemondDP3}
 \#\left\{ P \in X^{\circ}(\IQbar) \cap \Gamma : \hat{h}_L(P) \le \eta c' \max\{1, h_{\mathrm{Fal}}(A)\} \right\} \le c_0 (1+2\sqrt{\eta c' c_0})^{\mathrm{rk}\Gamma}.
\end{equation}
 
 So \eqref{EqRemondDP2} and \eqref{EqRemondDP3} yield, for each real number $\eta \ge 1$, 
\begin{equation}
\#\left\{ P \in X^{\circ}(\IQbar) \cap \Gamma :  \hat{h}_L(P) \le \eta \max\{1,c_{\mathrm{NT}}(A), h_1(A)\} \right\} \le c_0 (1+2\sqrt{\eta c' c_0})^{\mathrm{rk}\Gamma}.
\end{equation}
Thus  \cite[Thm.~6.8]{DaPh:07} implies
\begin{equation}
\#X^{\circ}(\IQbar) \cap \Gamma \le (c'')^{\mathrm{rk}\Gamma + 1} \cdot c_0 (1+2\sqrt{c'' c' c_0})^{\mathrm{rk}\Gamma}
\end{equation}
for some $c'' = c''(g, \deg_L X) > 0$. Therefore \eqref{EqRemond} holds true by letting $c = (c'' c_0(1+2\sqrt{c'' c' c_0}))^2$.
\end{proof}

\subsection{Proof of Theorem~\ref{MainThm3} over $\IQbar$}\label{SubsectionUMLIQbar}
Now we are ready to prove Theorem~\ref{MainThm3} over $\IQbar$. In view of R\'{e}mond's result (Theorem~\ref{ThmRemond}) cited above, the most important ingredient is the following proposition.
\begin{prop}\label{PropHypPack}
Let $A$ be an abelian variety of dimension $g$ and $L$ be an ample line bundle on $A$. 
Then with $L$ replaced  by $L\otimes [-1]^*L$, {\tt (Hyp pack)} holds true for each  irreducible subvariety $X$ of $A$ which generates $A$ and each finite rank subgroup $\Gamma$ of $A(\IQbar)$.
\end{prop}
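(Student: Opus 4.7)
The plan is to prove Proposition~\ref{PropHypPack} by induction on $r := \dim X$, combining Theorem~\ref{ThmSmallPointHighDim} (the generalized New Gap Principle) with the Uniform Mordell--Lang bound (Theorem~\ref{MainThm3}) for strictly smaller dimensional subvarieties as the inductive hypothesis. Since Theorem~\ref{MainThm3} in dimension $<r$ follows, via R\'{e}mond's Theorem~\ref{ThmRemond}, from Proposition~\ref{PropHypPack} in dimension $<r$, the whole argument is a simultaneous induction on $r$: at each step one first establishes Proposition~\ref{PropHypPack} in dimension $r$, and then feeds it into Theorem~\ref{ThmRemond} to obtain Theorem~\ref{MainThm3} in dimension $r$. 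The base case $r = 0$ is immediate, since $\dim X = 0$ together with $X$ generating $A$ forces $A$ to be trivial.

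For the inductive step, fix $P_0 \in X(\IQbar)$ and set $Y := X - P_0$. Then $Y$ still generates $A$ (since $Y - Y = X - X$), $\dim Y = r$, $\deg_L Y = \deg_L X$, and $Y^\circ = X^\circ - P_0$ by translation-invariance of the Ueno locus. Applying Theorem~\ref{ThmSmallPointHighDim} to $Y \subseteq A$ produces constants $c_1, c_2 > 0$ depending only on $g$ and $\deg_L X$ and a Zariski closed proper subset $Y' \subsetneq Y$ with $\deg_L Y' < c_2$ such that
\[
\{Q \in Y^\circ(\IQbar) : \hat h_{L \otimes L_{-}}(Q) \le c_1 \max\{1, h_{\mathrm{Fal}}(A)\}\} \subseteq Y'(\IQbar).
\]
Decompose $Y' = \bigcup_j Y'_j$ into irreducible components; each $Y'_j$ has $\dim Y'_j < r$ and $\deg_L Y'_j \le \deg_L Y' < c_2$, and the number of components is likewise bounded solely in terms of $c_2$.

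To conclude, estimate $\#\bigl(Y^\circ(\IQbar) \cap Y'(\IQbar) \cap \Gamma\bigr)$ component by component. For each $j$, the inclusion $Y'_j \subseteq Y$ and the definition of the Ueno locus give $Y^\circ(\IQbar) \cap Y'_j(\IQbar) \subseteq (Y'_j)^\circ(\IQbar)$, because any positive-dimensional coset $x+B$ contained in $Y'_j$ is also contained in $Y$. The inductive hypothesis --- Theorem~\ref{MainThm3} applied to the irreducible $Y'_j \subseteq A$ of dimension $<r$ --- then yields
\[
\#\,(Y'_j)^\circ(\IQbar) \cap \Gamma \le c(g, \deg_L Y'_j)^{\mathrm{rk}\Gamma + 1} \le c(g, c_2)^{\mathrm{rk}\Gamma + 1}.
\]
Choosing $c_0 \ge c_1^{-1}$ forces the set appearing in \textup{\texttt{(Hyp pack)}} for $L \otimes L_{-}$ to lie inside $\bigcup_j \bigl(Y^\circ(\IQbar) \cap Y'_j(\IQbar) \cap \Gamma\bigr)$, and summing the previous bound over the bounded number of components --- then absorbing the resulting numerical factor into $c_0$ (still depending only on $g$ and $\deg_L X$) --- gives the required estimate $c_0^{\mathrm{rk}\Gamma + 1}$.

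The main potential obstacle is not a calculation but the logical arrangement of the induction: one has to be careful that Theorem~\ref{MainThm3} is genuinely available at strictly smaller dimension at the moment it is invoked inside the proof of Proposition~\ref{PropHypPack} at dimension $r$. Once that simultaneous induction is set up cleanly, the remaining ingredients --- Theorem~\ref{ThmSmallPointHighDim}, the elementary bound on the number and degrees of components of $Y'$, and the Ueno-locus comparison $Y^\circ \cap Y'_j \subseteq (Y'_j)^\circ$ --- are essentially immediate.
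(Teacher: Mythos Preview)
Your proposal is correct and follows essentially the same approach as the paper's proof: induction on $\dim X$, translation by $P_0$, application of Theorem~\ref{ThmSmallPointHighDim} to obtain a low-degree proper subvariety $Y'$, the Ueno-locus comparison $Y^\circ \cap Y'_j \subseteq (Y'_j)^\circ$, and then the inductive bound on each component. You are in fact more explicit than the paper about the simultaneous nature of the induction with Theorem~\ref{MainThm3} via Theorem~\ref{ThmRemond}; the paper simply writes ``apply the induction hypothesis'' when it really means the combined statement. The only cosmetic difference is that the paper, for each component $X^\dagger$, passes to the smaller abelian variety $A^\dagger$ generated by $X^\dagger - X^\dagger$ and enlarges $\Gamma$ by a translation point before invoking the inductive bound, whereas you apply Theorem~\ref{MainThm3} directly to $Y'_j \subseteq A$; since the constant in Theorem~\ref{MainThm3} already depends only on $g$ and $\deg_L Y'_j$ (the reduction to the generating case being absorbed into its proof), your shortcut is valid and slightly cleaner.
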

\begin{proof}
Write $d = \deg_L X$. Denote for simplicity by $L_-:= [-1]^*L$.

We prove this result by induction on $r := \dim X$.

The base step is $r = 0$, in which case trivially holds true.

For an arbitrary $r \ge 1$. Assume the theorem is proved for $0, 1, \ldots, r-1$.

We wish to prove \eqref{EqRemondAssumption} with $L$ replaced by $L\otimes L_{-}$. Let $P_0 \in X(\IQbar)$. Then $\deg_L (X-P_0) = \deg_L X = d$. Moreover, $(X-P_0)^{\circ}(\IQbar) = X^{\circ}(\IQbar) - P_0$ by definition of the Ueno locus. Notice that $X-P_0$ still generates $A$ because $(X-P_0) - (X-P_0) = X-X$.

Apply Theorem~\ref{ThmSmallPointHighDim} to $X-P_0$. Then we have constants $c_1 = c_1(g,d) > 0$ and $c_2 = c_2(g,d) > 0$ such that for $\left\{P - P_0 \in X^{\circ}(\IQbar) - P_0 : \hat{h}_{L\otimes L_{-}}(P - P_0) \le c_1 \max\{1,h_{\mathrm{Fal}}(A)\} \right\}$ is contained in a proper Zariski closed $X' \subsetneq X - P_0$ with $\deg_L X' < c_2$. In particular, the number of irreducible components of $X'$ is $< c_2$.

Let $X^{\dagger}$ be an irreducible component of $X'$. Then $\dim X^{\dagger} \le \dim X - 1 \le r-1$. Let $A^{\dagger}$ be the abelian subvariety of $A$ generated by $X^{\dagger} - X^{\dagger}$. Then $X^{\dagger} \subseteq A^{\dagger} + Q$ for some $Q \in A(\IQbar)$. Now $\dim (X^{\dagger} - Q) = \dim X^{\dagger} \le r-1$.

Let $\Gamma^{\dagger}$ be the subgroup of $A(\IQbar)$ generated by $\Gamma$ and $Q$. Then $\mathrm{rk}\Gamma^{\dagger} \le \mathrm{rk}\Gamma + 1$. 
Apply the induction hypothesis to $A^{\dagger}$, $L|_{A^{\dagger}}$, the irreducible subvariety $X^{\dagger} - Q$ and $\Gamma^{\dagger} \cap A^{\dagger}(\IQbar)$. Then we have
\begin{equation}\label{EqUMLBoundIrreCompo}
\# (X^{\dagger} - Q)^{\circ}(\IQbar) \cap \Gamma^{\dagger} \le  (c^\dagger)^{\mathrm{rk}\Gamma^{\dagger} + 1} \le (c^{\dagger})^{\mathrm{rk}\Gamma + 2}
\end{equation}
for some $c^{\dagger} = c^\dagger(\dim A^{\dagger}, \deg_L(X^{\dagger}-Q)) > 0$. But $\dim A^{\dagger} \le \dim A = g$ and $\deg_L(X^\dagger -Q) = \deg_L X^{\dagger} \le \deg_L X' < c_2 = c_2(g,d)$.

But $(X^{\dagger} - Q)^{\circ}(\IQbar) = (X^{\dagger})^{\circ}(\IQbar) - Q$ by definition of the Ueno locus, and $Q \in \Gamma^{\dagger}$. So \eqref{EqUMLBoundIrreCompo} yields
\begin{equation}\label{EqUMLBoundIrreCompo2}
\# (X^{\dagger})^{\circ}(\IQbar) \cap \Gamma^{\dagger} \le  (c^\dagger)^{\mathrm{rk}\Gamma + 2}
\end{equation}

Now that $(X')^{\circ}(\IQbar) \cap \Gamma \subseteq \bigcup_{X^{\dagger}}(X^{\dagger})^{\circ}(\IQbar) \cap \Gamma^{\dagger}$ 
with $X^{\dagger}$ running over all irreducible components of $X'$ and $\Gamma^{\dagger}$ constructed accordingly,  \eqref{EqUMLBoundIrreCompo2} implies
\begin{equation}\label{EqDiscretePointsOnUenoLocus}
\# (X')^{\circ}(\IQbar) \cap \Gamma \le c_2 \max_{X^{\dagger}}\{c^{\dagger}\}^{\mathrm{rk}\Gamma+2} \le c_3^{\mathrm{rk}\Gamma + 1}
\end{equation}
where $c_3 = (\max_{X^{\dagger}}\{c_2, c^{\dagger}\})^3 > 0$ depends only on $g$ and $d$.

But $\left\{P - P_0 \in X^{\circ}(\IQbar) - P_0: \hat{h}_{L\otimes L_{-}}(P - P_0) \le c_1 \max\{1,h_{\mathrm{Fal}}(A)\} \right\} \subseteq X'(\IQbar)$ by construction of $X'$. Moreover, $(X')^{\circ} \supseteq X^{\circ} \cap X'$ by definition of the Ueno locus. So \eqref{EqDiscretePointsOnUenoLocus} yields
\begin{equation}
\left\{P \in (X^{\circ} (\IQbar) - P_0) \cap \Gamma : \hat{h}_{L\otimes L_{-}}(P- P_0) \le c_0^{-1} \max\{1,h_{\mathrm{Fal}}(A)\} \right\} \le c_0^{\mathrm{rk}\Gamma + 1}
\end{equation}
with $c_0 = \max\{c_1,c_3\} > 0$ which depends only on $g$ and $d$. Hence we are done.
\end{proof}

\begin{proof}[Proof of Theorem~\ref{MainThm3} with $F = \IQbar$]
Let $A$ be an abelian variety of dimension $g$ and $L$ be an ample line bundle on $A$. Let $X$ be a closed irreducible subvariety of $A$. Assume that all varieties are defined over $\IQbar$. Set $l = \deg_L A$ and $d = \deg_L X$. Let $\Gamma$ be a subgroup of $A(\IQbar)$ of finite rank.

Notice that $\deg_{L\otimes [-1]^*L} X = 2^{\dim X}\deg_L X \le 2^g d$ and $\deg_{L\otimes [-1]^*L} A = 2^g \deg_L A = 2^g l$.

Let $X^{\circ}$ be the complement of the Ueno locus of $X$. 

As before, we start by reducing to the case where $X$ generates $A$. 
Indeed, let $A'$ be the abelian subvariety of $A$ generated by $X-X$. Then $X \subseteq A' + Q$ for some $Q \in A(\IQbar)$. The subgroup $\Gamma'$ of $A(\IQbar)$ generated by $\Gamma$ and $Q$ has rank $\le \mathrm{rk}\Gamma + 1$. We have $(X-Q)^{\circ} = X^{\circ} - Q$ by definition of the Ueno locus, $(X^{\circ}(\IQbar)-Q) \cap \Gamma  \subseteq  (X^{\circ}(\IQbar) - Q) \cap \Gamma' = X^{\circ}(\IQbar) \cap \Gamma'$ and $\deg_L(X-Q) = \deg_L X$. 
If \eqref{EqBoundLatticePointsOutsideUeno} holds true for $X-Q \subseteq A'$, $L|_{A'}$ and $\Gamma' \cap A'(\IQbar)$, then $\#X^{\circ}(\IQbar) \cap \Gamma \le c(g,d)^{\mathrm{rk}\Gamma' + 1} \le  c(g,d)^{\mathrm{rk}\Gamma + 2}$. So we can conclude by replacing $c$ with $c^2$. Thus we are reduced to the case where $X$ generates $A$. In particular, we have $l \le c'(g,d)$ by Lemma~\ref{LemmaSmallestAbVarGenerated}.

\medskip

By Proposition~\ref{PropHypPack} and Theorem~\ref{ThmRemond}, we have $\#X^{\circ}(\IQbar) \cap \Gamma \le c(g,l,d)^{\mathrm{rk}\Gamma + 1}$. Thus \eqref{EqBoundLatticePointsOutsideUeno} holds true because $l \le c'(g,d)$. Hence we are done.
\end{proof}

\subsection{From $\IQbar$ to arbitrary $F$ of characteristic $0$}\label{subsectionspecialization}
The following lemma of specialization allows us to pass from $\IQbar$ to $F$. 
\begin{lemma}\label{LemmaSpecialization}
Assume Theorem~\ref{MainThm3} holds true for $F = \IQbar$. Then Theorem~\ref{MainThm3} holds true, under the extra assumptions that $\Gamma$ is finitely generated, for arbitrary $F$ of characteristic $0$ with $F = \overline{F}$.
\end{lemma}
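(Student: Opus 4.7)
The plan is to run a standard specialization argument using Masser's theorem \cite{masser1989specializations}, along the lines of \cite[Lem.3.1]{DGHBog}. Throughout, I use the formulation Theorem~\ref{MainThm2}, since the quantity $n+\#S$ behaves well under specialization whereas $\#S$ alone does not.

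First I would spread all relevant data. Pick generators $\gamma_1,\dots,\gamma_s$ of $\Gamma$ (allowed since $\Gamma$ is finitely generated) and let $R\subseteq F$ be a finitely generated $\IQbar$-subalgebra over which $A$, $L$, $X$, and each $\gamma_i$ are defined. Set $T:=\mathrm{Spec}(R)$. After shrinking $T$, one obtains an abelian scheme $\pi\colon\cA\to T$ equipped with a relatively ample line bundle $\cL$, a closed $T$-flat subscheme $\cX\subseteq\cA$, and sections $\tilde\gamma_i\colon T\to\cA$, all recovering the original data at the generic point $\eta$ of $T$. Next, apply Mordell--Lang (Faltings--Hindry) over $F$ in the irredundant form recalled after Theorem~\ref{MainThm2}:
\[
(X(F)\cap\Gamma)^{\mathrm{Zar}}=\bigcup_{i=1}^n(x_i+B_i)\coprod S_0,
\]
with $x_i+B_i\not\subseteq x_j+B_j$ for $i\neq j$ and $(x_i+B_i)(F)\cap\Gamma$ infinite. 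After further shrinking $T$, each $B_i$ extends to an abelian subscheme $\cB_i\subseteq\cA$ (closed smooth proper $T$-subgroup scheme with connected fibers), each $x_i$ extends to a section $\tilde x_i$, and each $P\in S_0$ extends to a section $\tilde P$.

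Now I would pick a good specialization $t\in T(\IQbar)$. Masser's theorem \cite{masser1989specializations} — this is where the hypothesis $\mathrm{End}(A)\cdot\Gamma=\Gamma$ enters — supplies a Hilbert-thin complement in $T$ outside of which $\mathrm{sp}_t\colon\Gamma\to\cA_t(\IQbar)$ is injective and rank-preserving. Simultaneously remove from $T$ the (countable union of) proper Zariski closed loci on which either two distinct translates $\tilde x_i+\cB_i$ and $\tilde x_j+\cB_j$ acquire a containment, or a point $\tilde P$ with $P\in S_0$ falls into some $\tilde x_i+\cB_i$, or two distinct sections from $S_0$ collide. Each such locus is proper on $T$ because the corresponding non-inclusion holds on the generic fiber. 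Choose $t$ in the complement. At this $t$, we have $\dim\cA_t=g$, $\deg_{\cL_t}\cX_t=\deg_L X$ (degree is locally constant in flat proper families), and $\mathrm{rk}\,\mathrm{sp}_t(\Gamma)=\mathrm{rk}\,\Gamma$.

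Then I would apply Theorem~\ref{MainThm2} over $\IQbar$ to $(\cA_t,\cL_t)$, $\cX_t$, and $\mathrm{sp}_t(\Gamma)\subseteq\cA_t(\IQbar)$ to obtain an irredundant decomposition
\[
(\cX_t(\IQbar)\cap\mathrm{sp}_t(\Gamma))^{\mathrm{Zar}}=\bigcup_{j=1}^{n_t}(y_j+C_j)\coprod S_t
\]
with $n_t+\#S_t\le c(g,\deg_L X)^{\mathrm{rk}\,\Gamma+1}$. Since $\mathrm{sp}_t$ is injective on $\Gamma$ and each $(x_i+B_i)(F)\cap\Gamma$ is infinite, its image lands in some $(y_j+C_j)(\IQbar)\cap\mathrm{sp}_t(\Gamma)$; by our genericity choice the $n$ translates $\mathrm{sp}_t(\tilde x_i)+\cB_{i,t}$ are pairwise non-contained, so they correspond to $n$ distinct $(y_j+C_j)$. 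Likewise the $\#S_0$ specialized sporadic points are distinct from each other and from all these translates. Hence $n+\#S_0\le n_t+\#S_t\le c(g,\deg_L X)^{\mathrm{rk}\,\Gamma+1}$, which is Theorem~\ref{MainThm2} over $F$.

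The main obstacle is arranging the genericity step: one must verify that the conditions ``$x_i+B_i\not\subseteq x_j+B_j$'' and ``$P\notin x_i+B_i$ for $P\in S_0$'' are encoded by proper Zariski closed subsets of $T$ (not merely by algebraic conditions on $F$-points), so that a single $t\in T(\IQbar)$ can be found satisfying all of them simultaneously together with Masser's conclusion; this uses that $\cB_i$ and the sections $\tilde x_i,\tilde P$ are defined over $T$ and that the generic fiber of the relative incidence scheme is empty, making its image in $T$ a proper closed subset.
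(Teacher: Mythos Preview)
Your approach is along the right lines—spread out, specialize, invoke Masser—but the final counting step has a genuine gap. You assert that because the $n$ specialized cosets $\tilde x_i(t)+\cB_{i,t}$ are pairwise non-contained, they ``correspond to $n$ distinct $(y_j+C_j)$''. This does not follow: two incomparable cosets $\tilde x_1(t)+\cB_{1,t}$ and $\tilde x_2(t)+\cB_{2,t}$ can both lie inside a single larger coset $y_j+C_j \subseteq \cX_t$, provided $\cA_t$ admits an abelian subvariety $C_j$ (not arising from any $B_i$) containing both $\cB_{1,t}$ and $\cB_{2,t}$ and such that $(y_j+C_j)(\IQbar)\cap\mathrm{sp}_t(\Gamma)$ is Zariski dense in $y_j+C_j$. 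Your genericity conditions only exclude finitely many containments among the \emph{known} specialized cosets; they cannot rule out containment in new cosets that appear only at the special fiber. The same issue afflicts the sporadic points: a specialized $\tilde P(t)$ with $P\in S_0$ may well land in some $(y_j+C_j)$ rather than in $S_t$. Hence the inequality $n+\#S_0 \le n_t+\#S_t$ is unjustified.

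The paper's proof confronts exactly this obstruction. In addition to injectivity of specialization on $\Gamma$, it requires $\mathrm{End}(A)\cong\mathrm{End}(\cA_v)$ at the chosen $v$ (using \cite{masser1996specializations}), so that every abelian subvariety of $\cA_v$ descends from one of $A$. With this it proves the key claim $(\cB_{i,v}(\IQbar)\cap\Gamma_v)^{\mathrm{Zar}}=\cB_{i,v}$—and \emph{this} is where the hypothesis $\mathrm{End}(A)\cdot\Gamma=\Gamma$ is actually used, not in Masser's injectivity statement as you suggest. Rather than attempting a direct injection of decompositions, the paper then argues by contradiction and pigeonhole: if $M>M_v$ for all $v$ in the Zariski dense good locus, two specialized cosets fall into the same component, forcing $\sigma_{i_v}(v)+\cB_{i_vj_v,v}\subseteq\cX_v$; a second pigeonhole on the pair $(i_v,j_v)$ fixes $(i_0,j_0)$ on a still Zariski dense set of $v$, whence $x_{i_0}+B_{i_0j_0}\subseteq X$, contradicting irredundancy over $F$.
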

\begin{proof} 
Assume Theorem~\ref{MainThm3} holds true for $F = \IQbar$. Then we obtain a function $c \colon \mathbb{N}^2 \rightarrow \mathbb{N}$ such that Theorem~\ref{MainThm3} holds true with this function $c$ viewed as a constant depending only on $g$ and the degree of the subvariety in question.

Now let $F$ be an arbitrary algebraic closed field of characteristic $0$. 
Let $A$, $L$, $X$ and $\Gamma$ be as in Theorem~\ref{MainThm3} with $\Gamma$ finitely generated. 
Write $\rho = \mathrm{rk}\Gamma$. Let $\gamma_1,\ldots,\gamma_{r} \in
A(F)$ be generators of $\Gamma$
with $\gamma_{\rho+1},\ldots,\gamma_{r}$ torsion.

There exists a field $K$, finitely generated over $\IQbar$, such that
$A$, all elements of $\mathrm{End}(A)$,\footnote{We can do this because $\mathrm{End}(A)$ is a finitely generated group.} $X$, and
 $\gamma_1,\ldots,\gamma_{r}$ are
defined over $K$. Then $K$ is the function field of
some regular, irreducible quasi-projective
variety $V$ defined over $\IQbar$.

Up to replacing $V$ by a Zariski open dense subset, we have
\begin{itemize}
\item $A$ extends to an abelian scheme $\cA \rightarrow V$ of relative dimension $g$,
\item $L$ extends to a relatively ample line bundle $\cL$ on $\cA / V$,
\item each element of $\mathrm{End}(A)$ extends to an element of $\mathrm{End}(\cA/V)$ (this can be achieved since $\mathrm{End}(A)$ is a finitely generated group); in particular, each abelian subvariety $B$ of $A$ extends to an abelian subscheme $\cB$ of $\cA \rightarrow V$,
\item $X$ extends to a flat family $\mathfrak{X} \rightarrow V$ (\textit{i.e.}, $X$ is the generic
fiber of $\mathfrak{X} \rightarrow V$),
\item $\gamma_1,\ldots,\gamma_{r}$ extend to sections of $\cA \rightarrow V$; we retain the symbols $\gamma_1,\ldots,\gamma_{r}$ for these sections. Use $\Gamma_V$ to denote the sub-$V$-group of $\cA$ generated by $\gamma_1,\ldots,\gamma_r$.
\end{itemize}
Moreover, up to replacing $V$ by a Zariski open dense subset, we may choose $\mathfrak{X}$ over $V$ to be a ``Good model'' as explained in \cite[p.~219--220]{Mazur:00} (which uses \cite[App.~1, Lemma~A]{Hindry:Lang}) such that the Ueno locus of $\mathfrak{X}_v$ is the specialization of the Ueno locus of $X$ for each $v \in V(\IQbar)$. Hence $\mathfrak{X}_v^{\circ}$ is the specialization of $X^{\circ}$.

As $\mathfrak{X} \rightarrow V$ is flat, we have $\deg_L X = \deg_{\cL_v}\mathfrak{X}_v$ for each $v \in V(\IQbar)$.

We define the specialization of $\Gamma$ at $v$, which we
denote with $\Gamma_v$, to be the subgroup of
$\cA_v(\IQbar)$ 
generated by
$\gamma_1(v),\ldots,\gamma_{r}(v)$. There exists then a specialization homomorphism $\Gamma \rightarrow \Gamma_v$ for each $v \in V(\IQbar)$.  Note that
$\mathrm{rk}\Gamma_v \le \rho$.

The extension of elements of $\mathrm{End}(A)$ to elements of $\mathrm{End}(\cA/V)$ yields a specialization $\mathrm{End}(A) \rightarrow \mathrm{End}(\cA_v)$ for each $v \in V(\IQbar)$. Denote this map by $\alpha \mapsto \alpha_v$. 

Set
\[
\Theta := \{v \in V(\IQbar) : \Gamma \rightarrow \Gamma_v\text{ is injective and }\mathrm{End}(A) \cong \mathrm{End}(\cA_v)\}.
\]
Masser \cite[Main Theorem and Scholium~1]{masser1989specializations} and \cite[Main Theorem]{masser1996specializations} guarantee that $\Theta$ is  Zariski dense in $V$.

Let $v \in \Theta$. Then $\#X^{\circ}(F) \cap \Gamma \le \#\mathfrak{X}_v^{\circ}(\IQbar) \cap \Gamma_v$. By the result over $\IQbar$, we have $\#\mathfrak{X}_v^{\circ}(\IQbar) \cap \Gamma_v \le c(g,\deg_{\cL_v}\mathfrak{X}_v)^{1+\mathrm{rk}\Gamma_v} \le c(g,\deg_L X)^{1+\rho}$. Hence we are done.
\end{proof}

Now we are ready to finish the proof of Theorem~\ref{MainThm3}.

\begin{proof}[Proof of Theorem~\ref{MainThm3} for arbitrary $F$] 
Let $F$ be an arbitrary algebraic closed field of characteristic $0$. 
Let $A$, $L$ and $X$ be as in Theorem~\ref{MainThm3}. Let $\Gamma$ be a subgroup of $A(F)$ of finite rank $\rho$. 

By the definition of a finite rank group,
there exists a finitely generated subgroup $\Gamma_0$ of
$A(F)$ with rank $\rho$  such that
\[
\Gamma \subseteq \{x \in A(F) : [N]x \in \Gamma_0\text{ for some }N\in \mathbb{N}\}.
\]
Moreover, we may choose such a $\Gamma_0$ satisfying that $\Gamma_0 = \mathrm{End}(A)\cdot \Gamma_0$.

For each $n\in \mathbb{N}$, define
\[
\frac{1}{n}\Gamma_0 := \{ x \in A(F) : [n]x \in \Gamma_0\}.
\]
Then $\frac{1}{n}\Gamma_0$ is again a finitely generated subgroup of $A(F)$ of rank $\rho$, and is invariant under $\mathrm{End}(A)$.

We have proved Theorem~\ref{MainThm3} over $\IQbar$ in $\mathsection$\ref{SubsectionUMLIQbar}. Thus Theorem~\ref{MainThm3}  holds true for $\frac{1}{n}\Gamma_0$ and our $F$ by the specialization result above (Lemma~\ref{LemmaSpecialization}). So there exists a constant $c= c(g, \deg_L X) > 0$ such that
\begin{equation}
  \label{EqBoundLeveln}
  \#X^{\circ}(F) \cap \frac{1}{n}\Gamma_0 \le c^{1+\rho}.
\end{equation}

Note that  $\{\frac{1}{n}\Gamma_0\}_{n\in \mathbb{N}}$ is a filtered system and $\Gamma \subseteq \bigcup_{n\in \mathbb{N}}\frac{1}{n}\Gamma_0$. But the bound \eqref{EqBoundLeveln} is independent of $n$. So
\[
  \#X^{\circ}(F) \cap \Gamma \le c^{1+\rho}.
\]
This is precisely Theorem~\ref{MainThm3}. Hence we are done.
\end{proof}

\subsection{From Theorem~\ref{MainThm3} to Theorem~\ref{MainThm2}}\label{SubsectionMainThm3toMainThm2}
Now that we have proved Theorem~\ref{MainThm3}, we can conclude for Theorem~\ref{MainThm2} with the following lemma.
\begin{lemma}\label{LemmaConjHighDimMLStrWeakEqui}
Assume Theorem~\ref{MainThm3} holds true for all $(A,L)$, $X$, and $\Gamma$. Then Theorem~\ref{MainThm2} also holds true.
\end{lemma}
\begin{proof}
Using the same argument as in the proof of Theorem~\ref{MainThm3} with $F=\IQbar$ at the end of $\mathsection$\ref{SubsectionUMLIQbar}, we may and do assume that $X$ generates $A$.

We start with the following finer description of the Ueno locus of $X$. Let $\Sigma(X;A)$  be the set of abelian subvarieties $B \subseteq A$ with $\dim B > 0$ satisfying: $x+B \subseteq X$ for some $x \in A(F)$, and $B$ is maximal for this property. Then for each $B \in \Sigma(X;A)$, there exists a closed subvariety $X_B$ of $X$ such that the Ueno locus of $X$ is $\bigcup_{B \in \Sigma(X;A)}(X_B+B)$.

The union above can be expressed in a quantitative way. First, by Bogomolov \cite[Thm.~1]{BogomolovUeno}, each $B \in \Sigma(X;A)$ satisfies $\deg_L B \le c_3$ for some constant $c_3 = c_3(\dim A,\deg_L X) > 0$, and hence $\# \Sigma(X;A) \le c_4 = c_4(\dim A,\deg_L X)$ by \cite[Prop.~4.1]{Remond:Decompte}; here we used Lemma~\ref{LemmaSmallestAbVarGenerated}. Next, $X_B$ can be constructed as follows. Let $B^\perp$ be a complement of $B$, \textit{i.e.} $B \cap B^\perp$ is finite and $B+B^\perp = A$. It is possible to choose such a $B^\perp$ with $\deg_L B^\perp \le c_5'(g, \deg_L A, \deg_L B)$; see \cite{MW:Complement}. Then we can choose $X_B:=\bigcap_{b \in B(F)}(X-b) \bigcap B^\perp$. Notice that by dimension reasons, 
 this intersection must be the component of a finite intersection of at most $\dim X \le \dim A$ members. So $\deg_L X_B \le c_5(\dim A,\deg_L X)$ by B\'{e}zout's Theorem and Lemma~\ref{LemmaSmallestAbVarGenerated}. In particular $X_B$ has $\le c_5$ irreducible components $X_{B,1},\ldots,X_{B,m_B}$.

As the $B_i$'s in \eqref{EqHighDimML} satisfies $x_i+B_i \subseteq X$ and $\dim B_i > 0$, we may and do assume $B_i \in \Sigma(X;A)$ by definition of the Ueno locus.  Now \eqref{EqHighDimML} becomes
\begin{equation}\label{EqConjHighDimMLStrOrg}
X(F) \cap \Gamma = \bigcup_{B\in \Sigma(X;A)} \bigcup_{j=1}^{n_B} (x_{B,j}+B)(F) \cap \Gamma  \coprod X^{\circ}(F)\cap \Gamma.
\end{equation}
Moreover, each $x_{B,j}$ can be chosen to be in $X_B^{\circ}(F) \cap \Gamma$, where $X_B^{\circ} = \bigcup_{k=1}^{m_B}X_{B,k}^{\circ}$. See \cite[Lem.~4.6]{Remond:Decompte}; notice that $p|_{X_B}$ is finite for the quotient $p \colon A \rightarrow A/B$. In particular, $n_B \le \#X_B^{\circ}(F) \cap \Gamma$.

Let us bound $n_B$ for each $B \in \Sigma(X;A)$. Applying Theorem~\ref{MainThm3} to each irreducible component $X_{B,k}$ of $X_B$, we get $\#X_{B,k}^{\circ}(F) \cap \Gamma \le c^{1+\mathrm{rk}\Gamma}$ for some $c = c(\dim A,\deg_L X_{B,k}) > 0$. But we have seen that $X_B$ has $\le c_5$ components and that $\deg_L X_{B,k} \le \deg_L X_B \le c_5$. So $\#X_B^{\circ}(F) \cap \Gamma$, and hence $n_B$, is $\le c_6(\dim A,\deg_L X)^{1+\mathrm{rk}\Gamma}$.

From the bounds on $\Sigma(X;A)$ and $n_B$ above, we get from \eqref{EqConjHighDimMLStrOrg} that $N \le c_4 c_6^{1+\mathrm{rk}\Gamma} + \#X^\circ(F)\cap \Gamma$. Hence Theorem~\ref{MainThm2} holds true by applying Theorem~\ref{MainThm3} again to $X^\circ(F)\cap \Gamma$.
\end{proof}

\section{Proof of the uniform Bogomolov conjecture (Theorem \ref{ThmUBC})}\label{SectionUBC}

\begin{proof}[Proof of Theorem~\ref{ThmUBC}]
Let $A$ be an abelian variety of dimension $g$, let $L$ be an ample line bundle, and let $X$  be an irreducible subvariety. 
Assume all these objects are defined over $\IQbar$.

Write $d = \deg_L X$ and $r = \dim X$. Let $c = c(g)$ be the constant from Lemma \ref{LemmaSmallestAbVarGenerated}. Let $c_2''' = c_2''(g,cd^{2g},r,d) > 0$ and $c_3''' = c_3''(g,cd^{2g},r,d) > 0$ be from Proposition \ref{PropEqDistStep1}. Moreover, set $c_3 := \min_{1 \le r \le g}\{c_3''(g,cd^{2g},r,d)\} > 0$ and  $c_2:=\max_{1\le r\le g} \{c_2''(g,cd^{2g},r,d)\} > 0$; both $c_3$ and $c_2$ depend only on $g$ and $d$.

We prove the theorem by induction on $r$. The base step $r = 0$ trivially holds true.

For arbitrary $r$, assume the theorem is proved for $0,\ldots,r-1$. 

Let $A'$ be the abelian subvariety of $A$ generated by $X-X$, then $\deg_L A' \le cd^{2g}$ by Lemma \ref{LemmaSmallestAbVarGenerated}. We can apply Proposition \ref{PropEqDistStep1} to $X$ and $(A', L|_{A'})$ to conclude that the set
\begin{equation}\label{EqUBCSet}
\Sigma:=\left\{P \in X^{\circ}(\IQbar) : \hat{h}_{L\otimes L_{-}}(P) \le c_3 \right\},
\end{equation}
where $L_{-} = [-1]^*L$, 
is contained in $X'(\IQbar)$, for some proper Zariski closed $X' \subsetneq X$ with $\deg_L(X') < c_2$. 
Each irreducible component of $X'$ has dimension $\le r-1$, and $X'$ has $\le c_2$ irreducible components. Hence the conclusion  follows by applying the induction hypothesis to each irreducible component of $X'$ and appropriately adjusting $c_3$ and $c_2$.
\end{proof}

\appendix
\renewcommand{\thesection}{\Alph{section}}
\setcounter{section}{0}

\section{R\'{e}mond's theorem revisited}\label{AppendixRemond}

The goal of this appendix is to give a more detailed proof of R\'{e}mond's theorem, which we cited as Theorem~\ref{ThmRemond}, to make the current paper more complete. We will explain how R\'{e}mond's generalized Vojta's Inequality, generalized Mumford's Inequality, and the technique to remove the height of the subvariety together imply the desired Theorem~\ref{ThmRemond}. The proof follows closely the arguments presented in \cite{Remond:Decompte}.

We work over $\IQbar$.

Let $A$ be an abelian variety and let $L$ be a \textit{symmetric} ample line bundle on $A$. To ease notation, we may and do assume $L$ is very ample and gives a projectively normal closed immersion into some projective space, by replacing $L$ by $L^{\otimes 4}$.

Let us restate Theorem~\ref{ThmRemond}.

Let $X$ be an irreducible subvariety of $A$, and $\Gamma$ be a finite rank subgroup of $A(\IQbar)$. We say that \textit{the assumption {\tt (Hyp pack)} holds true for $(A,L)$, $X$ and $\Gamma$}, if there exists a constant $c_0 = c_0(g, \deg_L X) > 0$ satisfying the following property: for each $P_0 \in X(\IQbar)$,
\begin{equationrepeat}{EqRemondAssumption}\label{EqRemondAppAssumptionApp}
\left\{P \in (X^{\circ}(\IQbar) -  P_0) \cap \Gamma : \hat{h}_L(P - P_0) \le c_0^{-1} \max\{1,h_{\mathrm{Fal}}(A)\} \right\} \le c_0^{\mathrm{rk}\Gamma + 1}.
\end{equationrepeat}

\begin{thmrepeat}{ThmRemond}\label{ThmRemondApp}
Assume that {\tt (Hyp pack)} holds true for all $(A,L)$, $X$, $\Gamma$ (as above) such that $X$ generates $A$.

Then for each polarized abelian variety $(A,L)$ with $L$ symmetric and very ample, each irreducible subvariety $X$ of $A$ and each  finite rank subgroup $\Gamma$ of $A(\IQbar)$,  we have
\begin{equationrepeat}{EqRemond}\label{EqRemondApp}
\#X^{\circ}(\IQbar) \cap \Gamma \le c(g,\deg_L X, \deg_L A)^{\mathrm{rk}\Gamma + 1}.
\end{equationrepeat}
\end{thmrepeat}

Moreover, we may and do assume that {\it the function $c$ is increasing in all three invariables}, by replacing $c(g,d,l)$ by $\max_{g'\le g, d'\le d, l' \le l} c(g',d',l')$.

In what follows, we will introduce many constants $c_4, c_5, \ldots$. All these constants are assumed to \textit{depend only on $g$, $\deg_L X$, and $\deg_L A$} unless stated otherwise.

\subsection{Preliminary setup}
We have $H^0(A, L) = \deg_L A / g!$ by Lemma~\ref{LemmaAbIsogPPAV}. Thus $A$ can be embedded into the projective space $\mathbb{P}^{\deg_L A / g! - 1}$ using global sections of $H^0(A, L)$. Thus the integer $n$ in \cite{Remond:Vojtasup, Remond:Decompte} can be taken to be $\deg_L A / g! - 1$.

The closed immersion $A \subseteq \mathbb{P}^{\deg_L A / g! - 1}$ defines a height function $h \colon A(\IQbar) \rightarrow \mathbb{R}$. The Tate Limit Process then gives rise to a height function
\[
\hat{h}_L \colon A(\IQbar) \rightarrow [0,\infty), \quad P \mapsto \lim_{N\rightarrow \infty}\frac{h([N^2]P)}{N^4}.
\]

For $P,Q\in A(\IQbar)$  we set
$\langle P,Q\rangle = (\hat{h}_L(P+Q)-\hat{h}_L(P)-\hat{h}_L(Q))/2$
and often abbreviate $|P| = \hat{h}_L(P)^{1/2}$. The notation $|P|$ is
justified by the fact that it induces a norm after tensoring with the reals.

Let $c_{\mathrm{NT}}(A)$ and $h_1(A)$ be as from \cite[Thm.~6.8]{DaPh:07}. It is known that 
\begin{equation}\label{EqcNTh1FaltingsHeight}
c_{\mathrm{NT}}(A), h_1(A) \le c' \max\{1,h_{\mathrm{Fal}}(A)\}
\end{equation}
 for some $c' = c'(g,\deg_L A) >0$; see \cite[equation (6.41)]{DaPh:07}.

Finally for any irreducible subvariety $X$ of the projective space $\mathbb{P}^{\deg_L A / g! - 1}$, one can define the height $h(X)$; see \cite{BGS}.

\subsection{Generalized Vojta's Inequality}
\begin{thm}[{\!\!\cite[Thm.~1.1]{Remond:Vojtasup}}]
  \label{thm:vojta}
  There exist constants $c_4 = c_4(g, \deg_L X, \deg_L A)> 0$ and $c_5 =c_5(g, \deg_L X, \deg_L A) > 0$ with the
  following property. 
If $P_0,\ldots,P_{\dim X} \in X^{\circ}(\IQbar)$ satisfy
\begin{equation*}
 \langle P_i,P_{i+1}\rangle \ge \left(1-\frac{1}{c_4}\right) |P_i||P_{i+1}|
\quad\text{and}\quad |P_{i+1}| \ge c_4 |P_i|,
\end{equation*}
then 
\begin{equation*}
|P_0|^2 \le c_5
    \max\{1,h(X),h_{\mathrm{Fal}}(A)\}.
\end{equation*}
\end{thm}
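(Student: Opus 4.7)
The plan is to adapt the Vojta--Faltings method on $X^{d+1}$ with $d = \dim X$, in the quantitative form pioneered by R\'emond. The hypotheses on the tuple $(P_0,\dots,P_d) \in (X^{\circ})^{d+1}$ --- near-parallel directions and geometric growth of N\'eron--Tate norms --- constitute Vojta's ``gap'' data; the goal is to exploit them to bound $|P_0|^2$ by a multiple of $\max\{1,h(X),h_{\mathrm{Fal}}(A)\}$.

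First I would fix positive integer weights $d_0,\dots,d_d$ chosen so that $d_i|P_i|^2$ is roughly constant in $i$, and construct an auxiliary section $s \in H^0(X^{d+1}, \mathcal{L}_N)$ where $\mathcal{L}_N = \bigotimes_{i=0}^d \mathrm{pr}_i^*L^{\otimes N d_i}$ for a large parameter $N$. An arithmetic Siegel lemma on $X^{d+1}$ --- with cohomological control via the multigraded Hilbert polynomial of $X^{d+1}$ (leading coefficients bounded polynomially in $\deg_L X$ and $\deg_L A$) and arithmetic control via the height of the Segre-style embedding of $A^{d+1}$ (polynomially bounded in $h_{\mathrm{Fal}}(A)$ through $c_{\mathrm{NT}}$ and $h_1$, cf.~\eqref{EqcNTh1FaltingsHeight}) --- produces such an $s$ whose log-sup Arakelov norm is at most a constant times $\max\{1, h(X), h_{\mathrm{Fal}}(A)\}\sum_i N d_i$ and which has ``index'' at least $(1-\eta)$ in a suitable sense along the small diagonal $\Delta \subseteq X^{d+1}$, for a small parameter $\eta = \eta(c_4)$.

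Next I would evaluate $s$ at $(P_0,\dots,P_d)$. Using both hypotheses, Vojta's geometric inequality controls the analytic size: expanding the N\'eron--Tate quadratic form on shifted coordinates, the near-parallelism makes the cross-pairings $\langle P_i,P_j\rangle$ nearly maximal while the geometric growth $|P_{i+1}|\ge c_4|P_i|$ suppresses terms of low diagonal index; with $\eta$ calibrated against $c_4$, this gives a strict analytic upper bound of the form $\log|s(P_0,\dots,P_d)| \le (1-\delta)\sum_i N d_i |P_i|^2$ for some $\delta>0$ independent of $N$. Confronting this with the arithmetic lower bound (product formula / Liouville) applied to the nonzero algebraic number $s(P_0,\dots,P_d)$ and optimizing in $N$ would yield $|P_0|^2 \le c_5 \max\{1, h(X), h_{\mathrm{Fal}}(A)\}$, conditional on the nonvanishing $s(P_0,\dots,P_d) \ne 0$.

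The main obstacle is exactly this nonvanishing, which cannot be arranged from Siegel's lemma alone. I invoke Faltings' Product Theorem in R\'emond's effective form: if $s$ were to vanish to sufficient index at $(P_0,\dots,P_d)$, there would be a product subvariety $Y_0\times\cdots\times Y_d \subseteq X^{d+1}$ through the tuple with $\sum_i \mathrm{codim}_X Y_i$ bounded in terms of $g,\deg_L X,\deg_L A$, forcing $\dim Y_i \ge 1$ for some $i$. But then $Y_i \subseteq X$ would be a positive-dimensional translate of an abelian subvariety of $A$ passing through $P_i$, contradicting $P_i \in X^{\circ}$ by the definition of the Ueno locus in \S\ref{SubsectionUenoLocus}. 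The delicate calibration of the arithmetic bound on $s$, the analytic upper bound at $(P_0,\dots,P_d)$, and the Product-Theorem index threshold --- all compatible uniformly in $P_0,\dots,P_d$ --- is what pins down $c_4$ and $c_5$ as explicit functions of $g$, $\deg_L X$, $\deg_L A$; it is precisely R\'emond's effective form of the Product Theorem that renders this calibration quantitative.
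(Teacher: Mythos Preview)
The paper does not reprove this result at all: its entire proof is a one-line citation of R\'emond's theorem \cite[Thm.1.1]{Remond:Vojtasup}, together with the remark that the ambient projective dimension is $n = \deg_L A/g! - 1$, that $c_{\mathrm{NT}}$ and $h_1$ are bounded by $c'(g,\deg_L A)\max\{1,h_{\mathrm{Fal}}(A)\}$ via \eqref{EqcNTh1FaltingsHeight}, and that $\dim X \le g$. Your proposal instead sketches the Vojta--Faltings machinery underlying R\'emond's result from scratch, which is far more than the paper intends here; the statement is recorded precisely so that it can be used as a black box.

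As for the sketch itself, the broad architecture (multigraded Siegel lemma, index along the diagonal, evaluation and Liouville, Product Theorem for nonvanishing) is correct, but the final step contains a genuine gap. Faltings' Product Theorem yields only a proper product subvariety $Y_0 \times \cdots \times Y_d \subsetneq X^{d+1}$ through the tuple; it does \emph{not} say that any factor $Y_i$ is a translate of an abelian subvariety of $A$. Your inference ``$Y_i$ would be a positive-dimensional translate of an abelian subvariety'' is exactly the missing content: one must iterate the whole construction on the smaller product $\prod Y_i$, and it is only the termination of this induction --- using the group law on $A$ and Kawamata--Ueno-type structure to control the exceptional locus --- that confines bad points to the Ueno locus. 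Without this iteration, the hypothesis $P_i \in X^{\circ}$ gives no contradiction, since $Y_i$ is an arbitrary positive-dimensional subvariety of $X$ through $P_i$, and such subvarieties always exist when $\dim X \ge 1$.
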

\begin{proof} This follows immediately from \cite[Thm.~1.1]{Remond:Vojtasup} (with $n = \deg_L A / g! - 1$) and \eqref{EqcNTh1FaltingsHeight} and $\dim X \le g$.
\end{proof}

\subsection{Generalized Mumford's Inequality}
Let $D_{\dim X} \colon X^{\dim X+1} \rightarrow A^{\dim X}$ be the morphism defined by $(x_0,x_1,\ldots,x_{\dim X}) \mapsto (x_1-x_0, \ldots, x_{\dim X}-x_0)$.

\begin{prop}[{\!\!\cite[Prop.~3.4]{Remond:Decompte}}]
\label{thm:mumford}  There exist constants $c_4 =c_4(g, \deg_L X, \deg_L A) > 0$ and $c_5 =c_5(g, \deg_L X, \deg_L A) > 0$ with the
  following property. 
Let $P_0 \in X(\IQbar)$. Suppose  $P_1,\ldots, P_{\dim X}\in X(\IQbar)$ with $(P_0, P_1,\ldots,P_{\dim X})$ isolated in the fiber of $D_{\dim X} \colon X^{\dim X+1} \rightarrow A^{\dim X}$. If 
  \begin{equation*}
 \langle P_0,P_i\rangle \ge \left(1-\frac{1}{c_4}\right)  |P_0|
 |P_i|\quad\text{and}\quad
 \bigl| |P_0| -  |P_i|\bigr| \le \frac{1}{c_4}  |P_0|
  \end{equation*}
  then
  \begin{equation*}
|P_0|^2  \le c_5 \max\{1,h(X), h_{\mathrm{Fal}}(A)\}.
  \end{equation*}
\end{prop}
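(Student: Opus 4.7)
The plan is to invoke R\'emond's generalized Mumford Inequality \cite[Prop.3.4]{Remond:Decompte} essentially verbatim, and then adjust the shape of the bound using the Faltings-height comparison \eqref{EqcNTh1FaltingsHeight}. R\'emond works in a projective embedding $A\hookrightarrow \mathbb{P}^n$ of degree $\deg_L A$ with $n = \deg_L A / g! - 1$, which is exactly the setup we fixed at the start of this appendix (we already replaced $L$ by $L^{\otimes 4}$ to ensure projective normality, so R\'emond's hypotheses apply). In that setup, his result gives a bound of the shape
\[
|P_0|^2 \le c_5'\max\{1, c_{\mathrm{NT}}, h_1, h(X)\},
\]
where $c_5'$ depends only on $n$, $\dim X$, and $\deg_L X$, provided the tuple $(P_0,P_1,\ldots,P_{\dim X})$ satisfies the two pointwise ``almost maximal pairing and roughly balanced norms'' hypotheses together with the geometric non-degeneracy condition.

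The geometric non-degeneracy condition is precisely that $(P_0,P_1,\ldots,P_{\dim X})$ is isolated in its fiber under $D_{\dim X}\colon X^{\dim X+1}\to A^{\dim X}$: this ensures that the subvariety $X^{\dim X+1}\subseteq A^{\dim X+1}$ meets the diagonal-translation pattern used in R\'emond's intersection-theoretic computation in a zero-dimensional locus at the point in question, which in turn guarantees the non-vanishing of the intersection number driving Mumford's inequality (this is the analogue, in higher dimension, of the two points on a curve not coinciding). Hence R\'emond's proposition applies to our data with $c_4$ and $c_5'$ depending only on the pair $(n, \dim X, \deg_L X)$, and since $\dim X \le g$ and $n = \deg_L A/g! - 1$, both $c_4$ and $c_5'$ in fact depend only on $g$, $\deg_L X$, $\deg_L A$, as required by our convention.

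It then remains to replace the $\max\{1,c_{\mathrm{NT}},h_1\}$ on the right-hand side by $\max\{1, h_{\mathrm{Fal}}(A)\}$. This is immediate from \eqref{EqcNTh1FaltingsHeight}, which yields
\[
\max\{1, c_{\mathrm{NT}}, h_1\} \le c'(g,\deg_L A)\,\max\{1, h_{\mathrm{Fal}}(A)\},
\]
so absorbing the factor $c'(g,\deg_L A)$ into a new constant $c_5 = c_5(g,\deg_L X,\deg_L A)$ produces the stated inequality
\[
|P_0|^2 \le c_5\,\max\{1, h(X), h_{\mathrm{Fal}}(A)\}.
\]

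The main (indeed only) obstacle is bookkeeping: one has to check carefully that the shapes of the hypotheses in \cite[Prop.3.4]{Remond:Decompte} match those stated here (in particular, R\'emond's smallness parameter corresponds to the reciprocal $1/c_4$ appearing above, and his ``generic position'' hypothesis translates into the isolatedness condition stated in our formulation), and that every auxiliary constant introduced along the way depends only on $g$, $\deg_L X$, and $\deg_L A$, in line with the blanket convention declared at the start of this appendix. There is no new mathematical input beyond R\'emond's argument and the standard bound \eqref{EqcNTh1FaltingsHeight}.
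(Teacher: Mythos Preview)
Your proposal is correct and follows exactly the paper's approach: the paper's proof is the single sentence ``This follows immediately from \cite[Prop.3.4]{Remond:Decompte} (with $n = \deg_L A / g! - 1$) and \eqref{EqcNTh1FaltingsHeight} and $\dim X \le g$,'' which is precisely what you have unpacked.
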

\begin{proof} This follows immediately from \cite[Prop.~3.4]{Remond:Decompte} (with $n = \deg_L A / g! - 1$) and \eqref{EqcNTh1FaltingsHeight} and $\dim X \le g$.
\end{proof}

\begin{prop}[{\!\!\cite[Prop.~3.3]{Remond:Decompte}}]\label{PropAlternativeMumfordIneq}
Let $\Xi \subseteq X^{\circ}(\IQbar)$. 
We are in one of the following alternatives.
\begin{enumerate}
\item[(i)] Either for any $x \in X(\IQbar)$, there exist pairwise distinct $x_1,\ldots,x_{\dim X} \in \Xi$ such that $(x,x_1,\ldots,x_{\dim X})$ is isolated in the fiber of $D_{\dim X} \colon X^{\dim X+1} \rightarrow A^{\dim X}$;
\item[(ii)] or $\Xi$ is contained in a proper Zariski closed subset $X' \subsetneq X$ with $\deg X' < (\deg_L X)^{2\dim X}$.
\end{enumerate}
\end{prop}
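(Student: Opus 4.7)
The plan is to prove the contrapositive: if case~(i) fails for some $x \in X(\IQbar)$, then $\Xi$ is contained in a proper closed $X' \subsetneq X$ with $\deg_L X' < (\deg_L X)^{2\dim X}$. Writing $d := \dim X$, I would try to choose pairwise distinct $x_1, \ldots, x_d \in \Xi$ inductively so that
\[
V_k := X \cap \bigcap_{i=1}^k (X - (x_i - x))
\]
has an irreducible component $V_k^x$ through $x$ with $\dim V_k^x = d-k$; when $k = d$, this is exactly the statement that $(x,x_1,\ldots,x_d)$ is isolated in its $D_d$-fiber.

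At step $k$, given $V_{k-1}^x$ of dimension $d - k + 1 \ge 1$, the obstruction is the ``bad locus''
\[
B_k := \bigl\{z \in X : V_{k-1}^x + (z-x) \subseteq X\bigr\} = X \cap \bigcap_{v \in V_{k-1}^x}(X - v + x).
\]
First I would check $B_k \subsetneq X$. If $B_k = X$, then each $v \in V_{k-1}^x$ would satisfy $v - x \in \mathrm{Stab}(X)$, forcing $V_{k-1}^x \subseteq x + \mathrm{Stab}(X)$; but $\Xi \cap X^\circ \neq \emptyset$ forces $\mathrm{Stab}(X)$ to be finite (otherwise $X^\circ = \emptyset$ by definition of the Ueno locus), contradicting $\dim V_{k-1}^x \ge 1$. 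Hence the inductive construction fails at step $k$ only if $\Xi \setminus \{x_1, \ldots, x_{k-1}\} \subseteq B_k$, in which case we set $X' := B_k \cup \{x_1, \ldots, x_{k-1}\}$.

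The main obstacle is bounding $\deg_L B_k$. By Noetherianity of the intersection $\bigcap_{v \in V_{k-1}^x}(X - v + x)$, a finite subset $\{v_0, v_1, \ldots, v_N\} \subseteq V_{k-1}^x$ suffices to cut $B_k$ out scheme-theoretically in $A$. By carefully picking the $v_i$'s so that each successive intersection drops dimension (using transversality on top-dimensional components, in the spirit of Bertini-type arguments), I expect to bound $N$ solely in terms of $d$, so that iterated B\'ezout yields $\deg_L B_k \le (\deg_L X)^{2d - 1}$. Adding in the $k-1 \le d-1$ isolated points $x_1, \ldots, x_{k-1}$ then produces the required bound $\deg_L X' < (\deg_L X)^{2d}$. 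The quantitative bookkeeping---how many $v_i$'s suffice, and how to arrange the transversality---is the technical heart of the argument.
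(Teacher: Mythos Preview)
Your overall strategy matches the paper's: negate (i), greedily intersect translates of $X$ to drive down the dimension at $x$, and when the process stalls, trap $\Xi$ in the resulting bad locus. But there is a real gap in your inductive bookkeeping.

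The invariant ``$V_k$ has an irreducible component $V_k^x$ through $x$ with $\dim V_k^x=d-k$'' does not propagate. Even when $x_k\notin B_k$, so that $V_{k-1}^x\cap\bigl(X-(x_k-x)\bigr)\subsetneq V_{k-1}^x$, an irreducible piece of this intersection through $x$ need not be an irreducible \emph{component} of the full $V_k$: the other components of $V_{k-1}$ through $x$---which your hypothesis does not control and which may well have dimension larger than $d-k+1$---can survive into $V_k$ and swallow it. The correct invariant, and the one the paper uses (after translating everything by $-x$), is the \emph{local dimension} $\dim_x V_k$. With that invariant, stalling at step $k$ means that for every $z\in\Xi$ \emph{some} top-dimensional component $C_j$ of $V_{k-1}$ through $x$ satisfies $C_j\subseteq X-(z-x)$, and different $z$'s may select different $C_j$'s. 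Hence $\Xi$ lies in the \emph{union} $\bigcup_j\{z\in X:C_j+(z-x)\subseteq X\}$, not in your single $B_k$. B\'ezout bounds the number of such $C_j$ by $(\deg_L X)^{\dim X}$, which supplies the first factor of $(\deg_L X)^{2\dim X}$. (Incidentally, the previously chosen $x_1,\dots,x_{k-1}$ already lie in this union, since re-inserting any of them into the intersection leaves $\dim_x$ unchanged; so your adjoined isolated points are unnecessary.)

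For the degree of each piece $\bigcap_{c\in C_j}(X-c+x)$---what you flag as the main obstacle---the paper's argument is short: by dimension reasons this infinite intersection is already realized by at most $\dim X$ of the translates, so B\'ezout gives degree $\le(\deg_L X)^{\dim X}$. No delicate transversality bookkeeping is needed; combining the two factors gives the stated bound.
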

\begin{proof}
Denote by $0$ the origin of the abelian variety $A$. We may and do assume that the stabilizer of $X$ in $A$, denoted by $\mathrm{Stab}(X)$ has dimension $0$; otherwise $X^{\circ} = \emptyset$ and the proposition trivially holds true.

The points in the fiber of $D_{\dim X}$ in question can be written as $(x+a, x_1+a, \ldots, x_r+a)$ with $a$ running over the $\IQbar$-points of $(X-x)\cap (X-x_1) \cap \cdots \cap (X-x_{\dim X})$. Thus $(x,x_1,\ldots,x_{\dim X})$ is isolated in the fiber of the image of $X^{\dim X+1} \rightarrow A^{\dim X}$ if and only if
\begin{equation}
\dim_0 (X-x) \cap (X-x_1) \cap \cdots (X-x_{\dim X}) = 0.
\end{equation}

Assume we are \textit{not} in case (i). Then there exists $i_0 \le \dim X-1$ satisfying the following property. 
There are pairwise distinct points $x_1, \ldots, x_{i_0}$ such that for $W : =  (X-x) \cap (X-x_1) \cap \cdots \cap (X-x_{i_0})$, we have
\begin{equation}\label{EqInterDimNonDecrease}
\dim_0 W = \dim_0  W \cap (X-y) = \dim X-i_0 \quad \text{ for all } y \in \Xi.
\end{equation}

Let $C_1,\ldots, C_s$ be the irreducible components of $W$ passing through $0$ with $\dim C_j = \dim X-i_0 \ge 1$. Then $s \le \sum_{j=1}^s \deg C_j \le (\deg X)^{i_0+1} \le (\deg X)^{\dim X}$ by B\'{e}zout's Theorem. Moreover,
\small
\begin{align*}
\dim_0 W\cap (X-y) = \dim_0 W = \dim X-i_0&  \Leftrightarrow C_j \subseteq X-y \text{ for some } j \in \{1,\ldots,s\} \\
& \Leftrightarrow y \in \bigcap_{c \in C_j(\IQbar)}(X-c) \text{ for some } j \in \{1,\ldots,s\}.
\end{align*}
\normalsize
So \eqref{EqInterDimNonDecrease} is equivalent to 
$\Xi \subseteq \bigcup_{j=1}^s \bigcap_{c \in C_j(\IQbar)}(X-c)$. 
Each $\bigcap_{c \in C_j(\IQbar)}(X-c)$ is a finite intersection of at most $\dim X$ members because of dimension reasons. So $\deg \bigcap_{c \in C_j(\IQbar)}(X-c) \le (\deg X)^{\dim X}$ by B\'{e}zout's Theorem. Moreover, each irreducible component of $\bigcap_{c \in C_j(\IQbar)}(X-c)$ has dimension $< \dim X$ because $\dim \mathrm{Stab}(X) = 0$. Hence we are in case (ii) by setting $X' = \bigcup_{j=1}^s \bigcap_{c \in C_j}(X-c)$. So we are done.
\end{proof}

\subsection{Removing $h(X)$}
\begin{lemma}[{\!\!\cite[Lem.~3.1]{Remond:Decompte}}]\label{LemmaSupressionHeightX}
Assume $S \subseteq X(\IQbar)$ a finite set. Assume that each equidimensional subvariety $Y \supseteq S$ of $X$ of dimension $\dim X-1$ satisfies $\deg_L Y > \deg_L A (\deg_L X)^2 / g!$. Then
\[
h(X) \le (\deg_L A / g! + 1)^{\dim X+1} \deg_L X \left( \max_{x\in S}h(x) + 3\log(\deg_L A / g!) \right).
\]
\end{lemma}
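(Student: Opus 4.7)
The proof plan is to bound $h(X)$ via an explicit estimate on its Chow form. Writing $N + 1 = h^0(A, L) = \deg_L A / g!$ and using the projective embedding $A \hookrightarrow \mathbb{P}^N$ provided by $L$ (symmetric and very ample with projective normality by the earlier reduction), we have $X \subseteq \mathbb{P}^N$ of dimension $r = \dim X$ and degree $d = \deg_L X$. The height $h(X)$ coincides, up to an additive error of size $O(d \log(N+1))$, with the logarithmic height of the Chow form $F_X \in \mathbb{Z}[u_0, \ldots, u_r]$, multihomogeneous of degree $d$ in each of the $r+1$ systems $u_i$ of $N+1$ hyperplane coefficients.

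The key idea is to recover $h(X)$ from evaluations $F_X(H_0, \ldots, H_r)$ at configurations of hyperplanes each passing through a point of $S$. If every $H_i$ passes through the same point $x \in S \subseteq X$, then $F_X(H_0, \ldots, H_r) = 0$, whereas a nonzero value yields, via the product formula, an upper bound for $h(F_X)$ in terms of $\log|F_X(\cdot)|$ and the heights of the $H_i$. I would construct inductively on $i = 0, 1, \ldots, r$ a hyperplane $H_i$ passing through a chosen point of $S$, hence of height $\le \max_{x \in S} h(x) + \log(N+1)$, such that the successive intersections $X_i := X \cap H_0 \cap \cdots \cap H_i$ are proper (hence equidimensional of dimension $r - i - 1$). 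Here the degree hypothesis is crucial: any failure of properness produces an equidimensional subvariety of $X$ of dimension $r-1$ containing $S$, whose degree is bounded by Bezout as $d \cdot 1 \cdot 1 \cdots = d \le (\deg_L A / g!)(d^2)$, contradicting the hypothesis. Thus the construction proceeds all the way to $X_r = \varnothing$, i.e., $F_X(H_0, \ldots, H_r) \neq 0$.

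With such a non-degenerate configuration in hand, I would invoke a Liouville-type lower bound for $|F_X(H_0, \ldots, H_r)|$ (equivalently, that the algebraic number $F_X(H_0, \ldots, H_r)$, being nonzero, has sum of local contributions equal to zero by the product formula), giving
\[
h(F_X) \le d(r+1) \bigl( \max_{x \in S} h(x) + \log(N+1) \bigr) + \text{combinatorial error}.
\]
Iterating this over sufficiently many configurations—one for each "monomial direction" in the coefficients of $F_X$—and using Vandermonde interpolation in the space of multihomogeneous polynomials of multidegree $(d, \ldots, d)$, whose dimension is $\binom{N+d}{d}^{r+1} \le (N+1)^{d(r+1)}$, yields the bound $h(X) \le (N+1)^{r+1} d \cdot (\max h(x) + 3\log(N+1))$, which matches the statement since $N + 1 = \deg_L A / g!$ and the $3\log(N+1)$ absorbs the normalization discrepancies between $h(F_X)$ and $h(X)$, between naive and Mahler heights of the hyperplanes, and between projective and affine heights of $S$.

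The main obstacle is the construction step: verifying that the degree hypothesis propagates correctly under iterated hyperplane slicing so that properness can be maintained through all $r+1$ steps. The threshold $(\deg_L A/g!)(\deg_L X)^2$ is tuned precisely against the Bezout product of the degrees of $X$ and of an $r$-fold hyperplane section, so the hypothesis is exactly strong enough: it ensures that at each inductive stage the intersections cannot stabilize to the point of containing $S$ without violating the degree bound. Once this geometric step is in place, the remaining interpolation and product-formula arguments are standard bookkeeping.
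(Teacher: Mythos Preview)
The paper does not give an independent proof; it simply cites R\'{e}mond \cite[Lem.~3.1]{Remond:Decompte} after the substitution $n = \deg_L A / g! - 1$. Your sketch therefore attempts more than the paper does, and the broad strategy --- bound $h(X)$ through its Chow form using hyperplane configurations of controlled height built from points of $S$ --- is the right one and is in line with R\'{e}mond's approach.

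The gap is at the step you yourself flag as the main obstacle. You assert that a failure of properness produces an equidimensional $Y\subsetneq X$ of dimension $r-1$, containing all of $S$, with $\deg_L Y\le d$. Neither the containment of $S$ nor the degree bound is justified by what you wrote: each $H_i$ in your construction passes through a \emph{single} chosen point of $S$, so the natural obstruction object $X_{i-1}=X\cap H_0\cap\cdots\cap H_{i-1}$ has no reason to contain the remaining points of $S$, and for $i\ge 2$ it has dimension $r-i$ rather than $r-1$. Note also that the hypothesis is calibrated at $(\deg_L A/g!)\,(\deg_L X)^2=(n+1)d^2$, not at $d$; this is not slack but reflects the degree of the obstruction R\'{e}mond's argument actually produces. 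Passing from a stalled hyperplane construction to a codimension-one subvariety of $X$ through \emph{every} point of $S$, with degree controlled by $(n+1)d^2$, is the substantive content of the lemma and requires a more careful construction than the inductive slicing you outline.

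A smaller point: a single non-vanishing value $F_X(H_0,\ldots,H_r)\neq 0$ combined with the product formula does not yield an upper bound on $h(F_X)$; Liouville-type inequalities bound heights from below, not above. Obtaining an upper bound requires either an interpolation scheme in which the vanishing conditions imposed by $S$ determine $F_X$ up to scalar, or a height-comparison formula under the finite linear projection $[H_0:\cdots:H_r]\colon X\to\mathbb{P}^r$; your ``one nonzero value, then iterate'' outline conflates these two mechanisms.
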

\begin{proof} This is precisely \cite[Lem.~3.1]{Remond:Decompte} with $n = \deg_L A / g! - 1$.
\end{proof}

\subsection{Proof of Theorem~\ref{ThmRemondApp}}
Let $X$ be an irreducible subvariety of $A$, and let $\Gamma$ be a subgroup of $A(\IQbar)$ of finite rank.

We start by reducing to the case where   $X$ generates $A$. 
Indeed, let $A'$ be the abelian subvariety of $A$ generated by $X-X$. Then $X \subseteq A' + Q$ for some $Q \in A(\IQbar)$. The subgroup $\Gamma'$ of $A(\IQbar)$ generated by $\Gamma$ and $Q$ has rank $\le \mathrm{rk}\Gamma + 1$. We have $(X-Q)^{\circ} = X^{\circ} - Q$ by definition of the Ueno locus, $(X^{\circ}(\IQbar)-Q) \cap \Gamma  \subseteq  (X^{\circ}(\IQbar) - Q) \cap \Gamma' = X^{\circ}(\IQbar) \cap \Gamma'$ and $\deg_L(X-Q) = \deg_L X$. By Lemma~\ref{LemmaSmallestAbVarGenerated}, {\tt (Hyp)} yields $\deg_L A' \le c'(g,\deg_L X)$. Therefore 
if \eqref{EqRemond} holds true for $X-Q \subseteq A'$, $L|_{A'}$ and $\Gamma' \cap A'(\IQbar)$, then $\#X^{\circ}(\IQbar) \cap \Gamma \le c(g,\deg_L X)^{\mathrm{rk}\Gamma' + 1} \le  c(g,\deg_L X)^{\mathrm{rk}\Gamma + 2}$. So we can conclude by replacing $c$ with $c^2$. Thus we are reduced to the case where $X$ generates $A$.

From now on, assume $X$ generates $A$. We prove \eqref{EqRemondApp} by induction on
\begin{equation}
r := \dim X.
\end{equation}

The base step is $r = 0$, in which case trivially holds true.

For an arbitrary $r \ge 1$. Assume \eqref{EqRemondApp} holds true for $0, 1, \ldots, r-1$.

Observe that both Theorem~\ref{thm:vojta} and Proposition~\ref{thm:mumford}
hold with $c_4$ and $c_5$ replaced by some larger value.  
We let $c_4$ (resp. $c_5$) denote the maximum of both constants $c_4$ (resp. $c_5$) from these two
theorems. Both constants depend only on $g$, $\deg_L X$ and $\deg_L A$.

\medskip
\noindent\underline{\textbf{Step~1:} Handle large points by both inequalities of R\'{e}mond.} The goal of this step is to prove the following bound: there exists a constant $c_6 = c_6(g, \deg_L X, \deg_L A) > 0$ such that
\begin{equation}\label{EqLargePointBoundWithHeightX}
\#\left\{ P \in X^{\circ}(\IQbar) \cap \Gamma : \hat{h}_L(P) \ge c_5 \max\{1,h(X),h_{\mathrm{Fal}}(A)\} \right\} \le c_6^{\mathrm{rk}\Gamma + 1}.
\end{equation}

The proof follows a standard classical argument involving the inequalities of Vojta and Mumford. Consider the $\mathrm{rk}\Gamma$-dimensional real vector space $\Gamma\otimes\mathbb{R}$ endowed with the Euclidean norm $| \cdot | = \hat{h}_L^{1/2}$. 
 We may and do assume
$\mathrm{rk}\Gamma \ge 1$. By elementary geometry, the
vector space can be covered by at most  $\lfloor
(1+({8c_4})^{1/2})^{\mathrm{rk}\Gamma} \rfloor$ cones on which $\langle P,Q\rangle \ge
(1-1/c_4) |P| |Q|$ holds. 

Let $P_0, P_1,P_2,\ldots,P_N \in X^{\circ}(\IQbar)\cap \Gamma$ be pairwise
distinct points in one such cone such that
\begin{equation}
  \label{eq:Piincreasing}
   c_5 \max\{1,h(X),h_{\mathrm{Fal}}(A) \} < |P_0|^2 \le |P_1|^2 \le |P_2|^2
 \le \cdots \le |P_N|^2.
  \end{equation}

Notice that 
\begin{equation}
  \label{eq:PiPjincone}
  \langle P_i,P_j\rangle \ge \left(1-\frac{1}{c_4}\right) |P_i||P_j| \quad \text{ for all }i,j\in \{0,\ldots,N\}.
\end{equation}

Set $N' := (\deg_L X)^{2g} c(g, (\deg_L X)^{2g}, \deg_L A)^{\mathrm{rk}\Gamma + 1} + 1$, with $c$ the constant from \eqref{EqRemondApp}.

Consider the subset $\Xi_j = \{P_{j+1},\ldots,P_{j+N'}\}$ with $j \in \{0,\ldots, N-N'\}$; it has $N'$ pairwise distinct elements. We claim that $\Xi$ cannot be contained in a proper Zariski closed subset $X' \subsetneq X$ with $\deg_L X' \le (\deg_L X)^{2\dim X}$. Indeed if such an $X'$ exists, then by definition of the Ueno locus we have $(X')^{\circ} \supseteq X^{\circ} \cap X'$. So $\Xi \subseteq (X')^{\circ}(\IQbar)  \cap \Gamma$. As $\dim X' < \dim X= r$, we can apply the induction hypothesis \eqref{EqRemondApp} to each irreducible component of $X'$. As $X'$ has $\le (\deg_L X)^{2r}$ irreducible components and each component has degree $\le (\deg_L X)^{2r}$, we then get $\# \Xi \le (\deg_L X)^{2r} c(g, (\deg_L X)^{2r} , \deg_L A)^{\mathrm{rk}\Gamma + 1}$. This contradicts our choice of $N'$ because $c$ is increasing in all the three variables.

 By Proposition~\ref{PropAlternativeMumfordIneq} applied to $\Xi_j$ and $P_j$, we then get pairwise distinct points $P_{i_1},\ldots,P_{i_r} \in \Xi_j$ such that  $(P_j,P_{i_1},\ldots,P_{i_r})$ is isolated in the fiber of $D_r \colon X^{r+1} \rightarrow A^r$. 
But the hypotheses of Proposition~\ref{thm:mumford} cannot hold true by \eqref{eq:Piincreasing} and \eqref{eq:PiPjincone}. So there exists $k \in \{i_1,\ldots,i_r\}$ such that $|P_k| -  |P_j| > \frac{1}{c_4}  |P_j|$. As $k  \le j+N'$, we then have 
\[
|P_{j+N'}|  > (1+\frac{1}{c_4})|P_j|.
\]
This holds true for each $j \in \{0,\ldots, N-N'\}$. So 
\begin{equation}\label{EqLaLaLaApp}
|P_{j+N'k}| > (1+\frac{1}{c_4})^k|P_j|
\end{equation}
for all $j \ge 0$ and $k \ge 1$.

Next we choose an integer $M \ge 0$ such that $(1+1/c_4)^M \ge c_4$. We may and do assume that $M$ depends only on $g$, $\deg_L X$ and $\deg_L A$ (since $c_4$ does). Then by \eqref{EqLaLaLaApp},  $|P_{(k+1) M N'}| > (1+1/c_4)^M |P_{kMN'}| \ge c_4 |P_{kMN'}|$ for each $k \ge 0$.

We claim $N < rMN' \le g MN'$. Indeed, assume $N \ge rMN'$. Then we have $r+1$ pairwise distinct points $P_0, P_{MN'}, P_{2MN'}, \ldots, P_{rMN'}$. 
The hypotheses of  Theorem~\ref{thm:vojta} for these points cannot hold true by \eqref{eq:Piincreasing} and \eqref{eq:PiPjincone}. Thus there exists $k$ such that $|P_{(k+1)MN'}| < c_4 |P_{kMN'}|$. But this contradicts the conclusion of last paragraph. So we much have $N < rMN' \le gMN'$.

Recall that we have covered $\Gamma\otimes \mathbb{R}$ by at most $\lfloor
(1+({8c_4})^{1/2})^{\mathrm{rk}\Gamma} \rfloor$ cones and each cone contains $< gMN'$ points  $P \in X^{\circ}(\IQbar) \cap \Gamma$ with $\hat{h}_L(P) =  |P|^2 \ge c_5 \max\{1,h(X),h_{\mathrm{Fal}}(A)\}$. Thus 
\begin{align*}
\#\left\{ P \in X^{\circ}(\IQbar) \cap \Gamma : \hat{h}_L(P) \ge c_5 \max\{1,h(X),h_{\mathrm{Fal}}(A)\} \right\} & \le 
(1+({8c_4})^{1/2})^{\mathrm{rk}\Gamma} gMN'
\end{align*}
All constants on the right hand side depend only on $g$, $\deg_L X$ and $\deg_L A$. 
So \eqref{EqLargePointBoundWithHeightX} holds true by choosing $c_6$ appropriately.

\medskip
\noindent\underline{\textbf{Step~2:} Remove the dependence on $h(X)$.} More precisely, set 
\small
\begin{equation}\label{Eqc7Nprimeprime}
c_7 := \deg_L A (\deg_L X)^2 / g! \cdot c(g, \deg_L A (\deg_L X)^2 / g! , \deg_L A) \quad \text{and}\quad N'':=c_7^{\mathrm{rk}\Gamma+1} + 1. 
\end{equation}
\normalsize
The goal of this step is to prove: There exist positive constants $c_8$, $c_9$, $c_{10}$, depending only on $g$, $\deg_L X$, and $\deg_L A$ with the following property. 
If $P_0, \ldots, P_{N''}$ are pairwise distinct points in $X^{\circ}(\IQbar) \cap \Gamma$, then
\begin{equation}\label{EqSupressionHauteurXEq1}
\small \#\left\{P \in X^{\circ}(\IQbar) \cap \Gamma : \hat{h}_L(P-P_0) \ge c_8^2 \max_{1\le i \le N''} \hat{h}_L(P_i-P_0)  + c_9\max\{1,h_{\mathrm{Fal}}(A)\} \right\} \le c_{10}^{\mathrm{rk}\Gamma+1}.
\end{equation}
\normalsize
The proof follows closely \cite[Prop.~3.6]{Remond:Decompte}. 
We wish to apply Lemma~\ref{LemmaSupressionHeightX} to $X-P_0$ and the set $S = \{ P_i - P_0 : 0 \le i \le N''\}$. Let us verify the hypothesis. Let $Y \subseteq X - P_0$ with $Y$ equidimensional of dimension $\dim X - 1 = r-1$ and $S \subseteq Y(\IQbar)$, and set $Y' := Y + P_0$. Then $P_i \in (Y')^{\circ}(\IQbar) \cap \Gamma$.\footnote{As $Y' \subseteq X$, we have $(Y')^{\circ} \supseteq X^{\circ} \cap Y'$ by definition of the Ueno locus. So $P_i \in (Y' \cap X^{\circ})(\IQbar) \cap \Gamma \subseteq (Y')^{\circ}(\IQbar) \cap \Gamma$.} Each irreducible component of $Y'$ has dimension $\le r-1$. Thus we can apply induction hypothesis \eqref{EqRemondApp} to each irreducible component of $Y'$. So $N'' \le \sum_{Y''} c(g,\deg_L Y'', \deg_L A)^{\mathrm{rk}\Gamma + 1}$, with $Y''$ running over all irreducible components of $Y'$. Since $\deg_L Y = \deg_L Y' = \sum_{Y''} \deg_L Y''$ and $c$ is increasing in all the three variables, this bound implies 
\[
N'' \le \deg_L Y \cdot c(g,\deg_L Y, \deg_L A)^{\mathrm{rk}\Gamma + 1}.
\]
We claim $\deg_L Y > \deg_L A (\deg_L X)^2 / g!$. Indeed, assume $\deg_L Y \le \deg_L A (\deg_L X)^2 / g!$. Then $N'' \le \deg_L A (\deg_L X)^2 / g! \cdot c(g, \deg_L A (\deg_L X)^2 / g! , \deg_L A)^{\mathrm{rk}\Gamma+1}$ because $c$ is increasing in all the three variables. This contradicts the definition of $N''$ from \eqref{Eqc7Nprimeprime}.

Thus the assumption of Lemma~\ref{LemmaSupressionHeightX} is satisfied. 
So 
\[
h(X) \le c_{11}(g, \deg_L X, \deg_L A) \left( \max_{1\le i \le N''}\hat{h}_L(P_i-P_0) + \max\{1,h_{\mathrm{Fal}}(A)\}\right).
\] 
Here we also used \eqref{EqcNTh1FaltingsHeight}. Thus by \eqref{EqLargePointBoundWithHeightX}, we can find the desired constants $c_8$, $c_9$, $c_{10}$ such that \eqref{EqSupressionHauteurXEq1} holds true.

\medskip
\noindent\underline{\textbf{Step~3:} Prove the following alternative.}
\begin{enumerate}
\item[(i)] Either $\#X^{\circ}(\IQbar) \cap \Gamma \le N''(8c_8+1)^{\mathrm{rk}\Gamma} + c_{10}^{\mathrm{rk}\Gamma+1}$;
\item[(ii)] or there exists $Q \in X^{\circ}(\IQbar) \cap \Gamma$ such that $\#\{P \in X^{\circ}(\IQbar) \cap \Gamma : \hat{h}_L(P-Q) \ge 2c_9\max\{1,h_{\mathrm{Fal}}(A)\} \} \le c_{10}^{\mathrm{rk}\Gamma+1}$.
\end{enumerate}

\medskip
The proof follows closely \cite[Prop.~3.7]{Remond:Decompte}. Assume we are \textit{not} in case (i), \textit{i.e.} $\#X^{\circ}(\IQbar) \cap \Gamma > N'' (8c_8+1)^{\mathrm{rk}\Gamma}+ c_{10}^{\mathrm{rk}\Gamma+1}$. Let $c_{12}$ be the smallest real number such that there exists $Q \in X^{\circ}(\IQbar) \cap \Gamma$ with
\begin{equation}\label{EqMinimalityc12}
\#\{P \in X^{\circ}(\IQbar) \cap \Gamma : |P-Q| \ge c_{12} \} \le c_{10}^{\mathrm{rk}\Gamma+1}.
\end{equation}

Consider the set $\Xi:=\{P \in X^{\circ}(\IQbar) \cap \Gamma : |P-Q| \le c_{12} \}$ in the $\mathrm{rk}\Gamma$-dimensional Euclidean space  
$(\Gamma\otimes\mathbb{R}, |\cdot|)$. Then  $\#\Xi \ge  \#X^{\circ}(\IQbar) \cap \Gamma - \#\{P \in X^{\circ}(\IQbar) \cap \Gamma : |P-Q| \ge c_{12} \}> N'' (4c_8+1)^{\mathrm{rk}\Gamma}$.  
By an elementary ball packing argument, $\Xi$ (being a subset of $\Gamma \otimes\mathbb{R}$ contained in a closed ball of radius $c_{12}$ centered at $Q$) is covered 
 by at most $(8c_8+1)^{\mathrm{rk}\Gamma}$ balls of radius $c_{12} / 4c_8$ centered at points in $X^{\circ}(\IQbar)  \cap \Gamma$; see \cite[Lem.~6.1]{Remond:Decompte}.  
By the Pigeonhole Principle, one of the balls contains $\ge N''+1$ points in $X^{\circ}(\IQbar) \cap \Gamma$, say $P_0, P_1, \ldots, P_{N''}$. We have $|P_i - P_0| \le c_{12} / 2c_8$ for each $i$. Thus \eqref{EqSupressionHauteurXEq1} yields
\[
\#\left\{P \in X^{\circ}(\IQbar) \cap \Gamma : \hat{h}_L(P-P_0) \ge c_{12}^2/4 + c_9\max\{1,h_{\mathrm{Fal}}(A)\} \right\} \le c_{10}^{\mathrm{rk}\Gamma+1}.
\]
Therefore by the minimality of $c_{12}$, we have $c_{12}^2 \le c_{12}^2/4 + c_9\max\{1,h_{\mathrm{Fal}}(A)\}$, and hence $c_{12}^2 \le 2c_9\max\{1,h_{\mathrm{Fal}}(A)\}$. So we are in case (ii) by \eqref{EqMinimalityc12}.

\medskip

\noindent\underline{\textbf{Step~4:} Conclude by the standard packing argument.}

Recall our assumption that $X$ generates $A$. The assumption of Theorem~\ref{ThmRemondApp} says that {\tt (Hyp pack)} holds true for $X$ and $\Gamma$, \textit{i.e.} we have \eqref{EqRemondAppAssumptionApp}.

Assume we are in case (i) from Step~3. Recall the definition of $N'' = c_7^{\mathrm{rk}\Gamma+1} + 1$ from \eqref{Eqc7Nprimeprime}. Then $\#X^{\circ}(\IQbar) \cap \Gamma \le c^{\mathrm{rk}\Gamma+1}$ for $c: = 2\max\{(c_7+1)(8c_8+1), c_{10}\}$. Hence we can conclude for this case.

Assume we are in case (ii) from Step~3. Set $R = (2 c_9\max\{1,h_{\mathrm{Fal}}(A)\})^{1/2}$ and $R_0 = (c_0^{-1}\max\{1,h_{\mathrm{Fal}}(A)\})^{1/2}$. 
By an elementary ball packing argument, any subset of $\Gamma \otimes\mathbb{R}$ contained in a closed ball of radius $R$ centered at $Q$ is covered by at most $(1+2R/R_0)^{\mathrm{rk}\Gamma}$ closed balls of radius $R_0$ centered at the elements $P - P_0$ with $P$ from the given subset \eqref{EqRemondAppAssumptionApp}; see \cite[Lem.~6.1]{Remond:Decompte}. Thus the number of balls in the covering is at most $(1+2\sqrt{2 c_9  c_0})^{\mathrm{rk}\Gamma}$. 
 But each closed ball of radius $r$ centered at some $P - P_0$ in \eqref{EqRemondAppAssumptionApp} contains at most $c$ elements by \eqref{EqRemondAppAssumptionApp}. So
\begin{equation}\label{EqRemondAppDP3}
 \#\left\{ P \in X^{\circ}(\IQbar) \cap \Gamma : \hat{h}_L(P-Q) \le 2 c_9\max\{1,h_{\mathrm{Fal}}(A)\}  \right\} \le c_0(1+2\sqrt{2 c_9  c_0})^{\mathrm{rk}\Gamma}.
\end{equation}
Thus we have $\#X^{\circ}(\IQbar) \cap \Gamma \le c_0(1+2\sqrt{2 c_9  c_0})^{\mathrm{rk}\Gamma} + c_{10}^{\mathrm{rk}\Gamma+1}$. So $\#X^{\circ}(\IQbar) \cap \Gamma \le c^{\mathrm{rk}\Gamma+1}$ for $c: = 2\max\{c_0, 1+2\sqrt{2c_9c_0}, c_{10}\}$. Hence we can conclude are this case.

Therefore, it suffices to take $c = 2\max\{(c_7+1)(8c_8+1), c_0, 1+2\sqrt{2c_9c_0}, c_{10}\}$, which is a constant depending only on $g$, $\deg_L X$ and $\deg_L A$. We are done.

\bibliographystyle{alpha}
\def\cprime{$'$}

\vfill

\end{document}